\begin{document}
\newcommand{\dyle}{\displaystyle}
\newcommand{\Hi}{{\mathbb H}}
\newcommand{\Ss}{{\mathbb S}}
\newcommand{\Rn}{{\mathbb{R}^n}}
\newcommand{\ieq}{\begin{equation}}
\newcommand{\eeq}{\end{equation}}
\newcommand{\ieqa}{\begin{eqnarray}}
\newcommand{\eeqa}{\end{eqnarray}}
\newcommand{\ieqas}{\begin{eqnarray*}}
\newcommand{\eeqas}{\end{eqnarray*}}
\newcommand{\Bo}{\put(260,0){\rule{2mm}{2mm}}\\}
\newcommand{\RR}{{\mathord{\mathbb R}}}
\newcommand{\Z}{{\mathord{\mathbb Z}}}
\newcommand{\N}{{\mathord{\mathbb N}}}
\newcommand{\C}{{\mathord{\mathbb C}}}
\def\L#1{\label{#1}} \def\R#1{{\rm (\ref{#1})}}


\theoremstyle{plain}
\newtheorem{theorem}{Theorem} [section]
\newtheorem{corollary}[theorem]{Corollary}
\newtheorem{lemma}[theorem]{Lemma}
\newtheorem{proposition}[theorem]{Proposition}
\newtheorem{co}{Corollary}
\newtheorem{lm}{Lemma}
\newtheorem{q}{Theorem}
\def\neweq#1{\begin{equation}\label{#1}}
\def\endeq{\end{equation}}
\def\eq#1{(\ref{#1})}

\theoremstyle{definition}
\newtheorem{definition}[theorem]{Definition}
\newtheorem{remark}[theorem]{Remark}

\numberwithin{figure}{section}
\newcommand{\res}{\mathop{\hbox{\vrule height 7pt width .5pt depth
0pt \vrule height .5pt width 6pt depth 0pt}}\nolimits}
\def\at#1{{\bf #1}: } \def\att#1#2{{\bf #1}, {\bf #2}: }
\def\attt#1#2#3{{\bf #1}, {\bf #2}, {\bf #3}: } \def\atttt#1#2#3#4{{\bf #1}, {\bf #2}, {\bf #3},{\bf #4}: }
\def\aug#1#2{\frac{\displaystyle #1}{\displaystyle #2}} \def\figura#1#2{ \begin{figure}[ht] \vspace{#1} \caption{#2}
\end{figure}} \def\B#1{\bibitem{#1}} \def\q{\int_{\Omega^\sharp}}
\def\z{\int_{B_{\bar{\rho}}}\underline{\nu}\nabla (w+K_{c})\cdot
\nabla h} \def\a{\int_{B_{\bar{\rho}}}}
\def\b{\cdot\aug{x}{\|x\|}}
\def\n{\underline{\nu}} \def\d{\int_{B_{r}}}
\def\e{\int_{B_{\rho_{j}}}} \def\LL{{\mathcal L}}
\def\itr{\mathrm{Int}\,}
\def\D{{\mathcal D}}
\def\tg{\tilde{g}}
\def\A{{\mathcal A}}
\def\S{{\mathcal S}}
\def\H{{\mathcal H}}
\def\M{{\mathcal M}}
\def\T{{\mathcal T}}
\def\U{{\mathcal U}}
\def\N{{\mathcal N}}
\def\I{{\mathcal I}}
\def\F{{\mathcal F}}
\def\J{{\mathcal J}}
\def\E{{\mathcal E}}
\def\F{{\mathcal F}}
\def\G{{\mathcal G}}
\def\HH{{\mathcal H}}
\def\W{{\mathcal W}}
\def\H{\D^{2*}_{X}}
\def\d{d^X_M }
\def\LL{{\mathcal L}}
\def\H{{\mathcal H}}
\def\HH{{\mathcal H}}
\def\itr{\mathrm{Int}\,}
\def\vah{\mbox{var}_\Hi}
\def\vahh{\mbox{var}_\Hi^1}
\def\vax{\mbox{var}_X^1}
\def\va{\mbox{var}}
\def\SS{{\mathcal S}}
\def\Y{{\mathcal Y}}
\def\length{{l_\Hi}}
\newcommand{\average}{{\mathchoice {\kern1ex\vcenter{\hrule
height.4pt width 6pt depth0pt} \kern-11pt} {\kern1ex\vcenter{\hrule height.4pt width 4.3pt depth0pt} \kern-7pt} {} {} }}
\def\weak{\rightharpoonup}
\def\detu{{\rm det}(D^2u)}
\def\detut{{\rm det}(D^2u(t))}
\def\detvt{{\rm det}(D^2v(t))}
\def\detv{{\rm det}(D^2v)}
\def\uuu{u_xu_yu_{xy}}
\def\uuut{u_x(t)u_y(t)u_{xy}(t)}
\def\uuus{u_x(s)u_y(s)u_{xy}(s)}
\def\uuutn{u_x(t_n)u_y(t_n)u_{xy}(t_n)}
\def\vvv{v_xv_yv_{xy}}
\newcommand{\ave}{\average\int}

\title[Polyharmonic $k-$Hessian equations]{Polyharmonic $k-$Hessian equations in $\mathbb{R}^N$}

\author[P. Balodis, C. Escudero]{Pedro Balodis, Carlos Escudero}
\address{}
\email{}
\keywords{Higher order elliptic equations, $k-$Hessian type equations,
Existence of solutions, Fixed point methods, Functional inequalities, Harmonic analysis of partial differential equations.
\\ \indent 2010 {\it MSC: 35G20, 35G30, 35J60, 35J61, 42B35, 42B37, 46E30, 46E35, 46N20.}}

\date{\today}

\begin{abstract}
This work is focused on the study of the nonlinear elliptic higher order equation
\begin{equation}\nonumber
\left( -\Delta \right)^m u = S_k[-u] + \lambda f, \qquad x \in \mathbb{R}^N,
\end{equation}
where the $k-$Hessian $S_k[u]$ is the $k^{\mathrm{th}}$ elementary symmetric polynomial of eigenvalues of the Hessian matrix of the solution
and the datum $f$ belongs to a suitable functional space. This problem is posed in $\mathbb{R}^N$ and we prove the existence
of at least one solution by means of topological fixed point methods for suitable values of $m \in \mathbb{N}$. Questions related to
the regularity of the solutions and extensions of these results to the nonlocal setting are also addressed.
On the way to construct these proofs, some technical results such as a fixed point theorem and a refinement of the critical Sobolev embedding,
which could be of independent interest, are introduced.
\end{abstract}
\renewcommand{\thefootnote}{\fnsymbol{footnote}}
\setcounter{footnote}{-1}
\footnote{Supported by project MTM2013-40846-P, MINECO, Spain.}
\renewcommand{\thefootnote}{\arabic{footnote}}
\maketitle

\section{Introduction}

The goal of this work is to develop an analytical framework for the study of the family of higher order equations
\begin{equation}\label{rkhessian}
\left( -\Delta \right)^m u = S_k[-u] + \lambda f, \qquad x \in \mathbb{R}^N,
\end{equation}
where $m, \, N, \, k \, \in \mathbb{N}$, $\lambda \in \mathbb{R}$ and the datum $f:\mathbb{R}^N \longrightarrow \mathbb{R}$ belongs to a
suitable functional space, to be made precise in the following. The nonlinearity in this equation is the $k-$Hessian $S_k[u]=\sigma_k(\Lambda)$, where
$$
\sigma_k(\Lambda)= \sum_{i_1<\cdots<i_k} \Lambda_{i_1} \cdots \Lambda_{i_k},
$$
is the $k^{\mathrm{th}}$ elementary symmetric polynomial and $\Lambda=(\Lambda_1,\cdots,\Lambda_n)$ are the eigenvalues of the Hessian matrix of
the solution $(D^2 u)$.
Analogously $S_k[u]$ can be defined as the sum of the $k^{\mathrm{th}}$ principal minors of the Hessian matrix or,
using the language of exterior algebra, as the trace of the $k^{\mathrm{th}}$ exterior power of $(D^2 u)$.
For $k=1$ the $k-$Hessian $S_k[u]$ becomes the trace of the Hessian matrix, that is, the Laplacian. Since our focus is put on nonlinear equations
we will skip this case and always consider $2 \le k \le N$.

To describe our motivation consider for a moment equation~\eqref{rkhessian} free of the polyharmonic operator.
Such an equation would not only generalize the Poisson equation for $k=1$, it would also generalize the Monge-Amp\`ere
equation~\cite{caffarelli1,caffarelli2}
$$
\det(D^2 u) = f,
$$
for $k=N$. In fact, such an equation
$$
S_k[u] = f,
$$
is denominated the $k-$Hessian equation, and it, together with related problems, has been intensively studied during the last
years~\cite{caffarelli3,wang1,wang2,labutin,wang3,trudinger,trudinger1,wang4,wang5,wang6,wang7,wang8,wang9,wang10,wang11,wang12,wang}.
It is interesting to note that the analytical approach to this problem has required the assumption of a series of geometric constraints
in order to preserve the ellipticity of the nonlinear $k-$Hessian operator~\cite{wang}. Such constraints are not needed in the case of
full equation~\eqref{rkhessian}~\cite{escudero}, what makes this sort of problem an alternative viewpoint to the interesting nonlinear
$k-$Hessian operator.

A second source of motivation is the rise of studies focused on polyharmonic problems in recent times~\cite{AGGM,BG,CEGM,DDGM,DFG,FG,FGK,GGS,moradifam}.
While boundary value problems for polyharmonic operators have already been considered with different types of interesting nonlinearities in these
and different works, the history of polyharmonic $k-$Hessian equations is still short~\cite{escudero,n1,n2,n3,n4,n5,n6}.
At this point, it is important to stress the natural character of this sort of nonlinearity in the polyharmonic framework. Indeed,
the $k-$Hessians, $1 \le k \le N$, form a basis of the vector space of polynomial invariants of the Hessian matrix under the orthogonal group $O(N)$
of degree lower or equal to $N$, at least for regular enough $u$~\cite{procesi}. So on one hand these nonlinearities give rise to genuinely
polyharmonic semilinear equations with no possible harmonic analogue, what makes them an excellent candidate to push forward the theory of polyharmonic
boundary value problems. While on the other hand, these higher order equations are some of the simplest ones compatible with the ideas of invariance
with respect to rotations and reflections widespread in the realm of physical modeling.

Yet another interesting property that motivates us to study equation~\eqref{rkhessian} is its intriguing dependence on the boundary conditions,
as already noted in~\cite{n5}. We studied in~\cite{escudero} this family of equations on bounded domains subject to Dirichlet boundary conditions.
In this work we are interested on the ``boundary value problem''
\begin{subequations}
\begin{eqnarray}\label{bvphessian}
\left( -\Delta \right)^m u &=& S_k[-u] + \lambda f, \qquad x \in \mathbb{R}^N, \\ \label{bcons}
u(x) &\rightarrow& 0, \quad \text{when} \quad |x| \rightarrow \infty.
\end{eqnarray}
\end{subequations}
First of all we have to state what do we mean by this ``boundary condition''; in fact, this constitutes a very important \textit{remark}:
we say that a solution ``vanishes at infinity'' if it belongs to some $L^p(\mathbb{R}^N)$, $1 \le p < \infty$, although we cannot give any
reasonable pointwise meaning to such an affirmation. Note that this is the only way in which an existence theory \textit{\`a la}
Calder\'on-Zygmund can be pushed forward. Of course, if a function pointwise vanishes at infinity, we will also say that it ``vanishes at infinity''.
Note also that the nonlinearity is $S_k[-u]$ rather than $S_k[u]$; that is, the nonlinearity is exactly the coefficient of the
monomial of degree $N-k$ within the characteristic polynomial of the Hessian matrix. We have considered such a form to be
in complete agreement with the structure of the equation in~\cite{escudero}. However, this assumption was needed in this reference
in order to construct the variational approach to the existence of solutions employed there. Our present approach relies on a
topological fixed point argument and would work exactly in the same way if we substituted the current nonlinearity by $S_k[u]$.
This, among other things, highlights the fact that the present existence proofs are genuinely different from previously used arguments.

We now present our main result:
\begin{theorem}\label{main}
Problem~\eqref{bvphessian}-\eqref{bcons} has at least one weak solution in the following cases:
\begin{itemize}
\item[(a)] $f \in L^p(\mathbb{R}^N), \quad 1 < p < \frac{N}{2k}, \quad m = 1 + N (k-1)/(2pk) \in \mathbb{N}, \quad N > 2k$,
\item[(b)] $f \in L^1(\mathbb{R}^N), \quad m=1 + N (k-1)/(2k) \in \mathbb{N}, \quad N > 2k$,
\item[(c)] $f \in \mathcal{H}^1(\mathbb{R}^N), \quad m=1 + N (k-1)/(2k) \in \mathbb{N}, \quad N > 2k$,
\item[(d)] $f \in \mathcal{H}^1(\mathbb{R}^N), \quad m=1 + N (k-1)/(2k) \in \mathbb{N}, \quad N = 2k$,
\end{itemize}
provided $|\lambda|$ is small enough.
Then, respectively
\begin{itemize}
\item[(a)] $u \in \dot{W}^{2m-\epsilon,N p/(N-\epsilon p)}(\mathbb{R}^N) \, \forall \, 0 \le \epsilon \le 2m$,
\item[(b)] $u \in \dot{W}^{2m-\epsilon,N/(N-\epsilon)}(\mathbb{R}^N) \, \forall \, 0<\epsilon \le 2m$,
\item[(c)] $u \in \dot{W}^{2m-\epsilon,N/(N-\epsilon)}(\mathbb{R}^N) \, \forall \, 0 \le \epsilon \le 2m$,
\item[(d)] $u \in \dot{W}^{2m-\epsilon,N/(N-\epsilon)}(\mathbb{R}^N) \, \forall \, 0 \le \epsilon \le 2m$.
\end{itemize}
Moreover, in case (b), $D^{2m} u \in L^{1,\infty}(\mathbb{R}^N)$, in case (c), $D^{2m} u \in \mathcal{H}^{1}(\mathbb{R}^N)$
and, in case (d), $D^{2m} u \in \mathcal{H}^{1}(\mathbb{R}^N)$ and $u \in C_0(\mathbb{R}^N)$.
Also, for a smaller enough $|\lambda|$, the solution is locally unique in cases (a), (b) and (c).
\end{theorem}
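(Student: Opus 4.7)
The natural plan is to recast the problem as a fixed-point equation by inverting the polyharmonic operator via convolution with a constant multiple of the Riesz kernel $I_{2m}$. Define the operator
\begin{equation}\nonumber
T(u) := I_{2m} * S_k[-u] + \lambda \, I_{2m} * f,
\end{equation}
and seek $u$ with $T(u)=u$. The working space depends on the case: in (a) take $X_p = \dot{W}^{2m,p}(\mathbb{R}^N)$ with norm $\|D^{2m} u\|_p$; in cases (c) and (d) replace the target space of Calder\'on--Zygmund with the Hardy space $\mathcal{H}^1$; in case (b) the appropriate endpoint space involves weak $L^1$ at the top derivative level.

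The heart of the matter is a pair of matching estimates. On the linear side, Calder\'on--Zygmund theory gives $\|D^{2m}(I_{2m}*g)\|_p \lesssim \|g\|_p$ for $1<p<\infty$, and the Fefferman--Stein theorem gives the analogue with $L^1$ replaced by $\mathcal{H}^1$. On the nonlinear side, $S_k[-u]$ is a sum of $k$-fold products of entries of $D^2 u$, so H\"older gives
\begin{equation}\nonumber
\|S_k[-u]\|_p \lesssim \|D^2 u\|_{kp}^k,
\end{equation}
and the Sobolev embedding $\dot{W}^{2m,p}\hookrightarrow\dot{W}^{2,kp}$ holds precisely when $\tfrac{1}{kp}=\tfrac{1}{p}-\tfrac{2m-2}{N}$, i.e.\ $m=1+N(k-1)/(2kp)$ --- which is exactly the arithmetic relation imposed in the statement. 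Thus, with the scaling built into the hypotheses, one obtains
\begin{equation}\nonumber
\|T(u)\|_{X} \le C_1 \|u\|_X^k + C_2 |\lambda| \, \|f\|_{Y},
\end{equation}
where $Y$ is $L^p$, $L^1$ or $\mathcal{H}^1$ as appropriate.

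Once these bounds are in place, the fixed-point step is essentially algebraic. Choose $R$ small and then $|\lambda|$ small enough that $C_1 R^k + C_2 |\lambda| \|f\|_Y \le R$; then $T$ stabilises the closed ball $\overline{B}_R\subset X$. A parallel Lipschitz estimate, obtained by writing $S_k[-u]-S_k[-v]$ as a sum of at most $k$ products each containing one factor $D^2(u-v)$ and $k-1$ factors of $D^2 u$ or $D^2 v$ and applying H\"older and Sobolev as above, produces a contraction constant $k C_1 R^{k-1}<1$ for $R$ small. The Banach fixed point theorem then furnishes a unique $u \in \overline{B}_R$, which accounts for the local uniqueness in cases (a), (b), (c). The regularity statements $u\in \dot{W}^{2m-\epsilon,Np/(N-\epsilon p)}$ follow from fractional Sobolev embedding applied to $u=T(u)\in\dot{W}^{2m,p}$.

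The genuinely hard point is the endpoint at $p=1$, and especially the $\mathcal{H}^1$ cases (c)--(d). Ordinary Calder\'on--Zygmund fails on $L^1$, so one must show that, when $u$ lives at the critical Sobolev scale, $S_k[-u]$ is not merely integrable but actually lies in $\mathcal{H}^1(\mathbb{R}^N)$. This is a higher-order analogue of the Coifman--Lions--Meyer--Semmes theorem on Jacobian determinants: $S_k$ admits a divergence structure $S_k[u]=c_k\,\partial_i(u_{ij}\,T_k^{ij}[u])$ through the Newton tensor, which, combined with the multilinear compensated-compactness estimates of CLMS, should yield $\|S_k[-u]\|_{\mathcal{H}^1}\lesssim \|D^{2m}u\|_{\mathcal{H}^1}^k$ in the critical dimensions. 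The second delicate point is the $C_0$ conclusion in case (d), where $N=2k=2m$ puts us at the critical Sobolev scaling: here $\dot{W}^{N,1}$ does not embed into $C_0$ in general, and one needs the refined Hardy-space critical embedding alluded to in the abstract, which upgrades $D^{2m}u\in\mathcal{H}^1$ to $u\in C_0(\mathbb{R}^N)$. These two ingredients --- the Hardy-space bound for $S_k$ and the refined critical embedding --- are the places where I expect the real work to lie; everything else reduces to bookkeeping with H\"older, Sobolev and a standard contraction argument.
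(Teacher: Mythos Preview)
Your plan is sound for cases (a), (b), (c): the scaling identification is correct, the CLMS-type $\mathcal{H}^1$ estimate for $S_k$ via the divergence structure is exactly what the paper uses (Lemma~6.1), and the contraction argument you describe is essentially the paper's route to local uniqueness (Theorems~7.2 and~7.3). Note, though, that even for existence in these cases the paper first runs a Schauder--Tychonoff type argument (weak-$*$ sequential continuity of $S_k$ into $\mathcal{H}^1$ or $L^p$, plus a custom fixed-point theorem on weak-$*$ compact convex sets), and only afterwards recovers local uniqueness by contraction for possibly smaller $|\lambda|$.

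The genuine gap is case~(d), $N=2k$, where your contraction scheme breaks down. In the critical case $2m=N$, the linear solution operator $(-\Delta)^{-m}$ does not map $L^1(\mathbb{R}^N)$ back into your working space (Proposition~3.1 requires $m<N/2$ for $L^1$ data; for $m=N/2$ one needs $\mathcal{H}^1$ data). Your Lipschitz estimate writes $S_k[-u]-S_k[-v]$ as a sum of $k$-fold products each containing one factor $D^2(u-v)$; by H\"older each such product lies in $L^1$, but there is no reason for it to lie in $\mathcal{H}^1$: the cancellation structure that puts the full $k$-Hessian in $\mathcal{H}^1$ comes from its specific divergence form, and the individual terms in the difference expansion do not inherit it. Consequently you cannot close the contraction in any space on which the linear inverse acts at the critical dimension. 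This is precisely why the paper does \emph{not} claim local uniqueness in case~(d), and why it proves existence there via a topological fixed point theorem that requires only weak-$*$ sequential continuity of $v\mapsto S_k[(-\Delta)^{-m}(-v)]$ on $\mathcal{H}^1$ (Proposition~6.3 and Theorem~4.1), not a Lipschitz bound. The ``refined critical Sobolev embedding'' in the paper is $\dot W^{1,N}\hookrightarrow \mathrm{VMO}$, and its role is to supply the duality needed to upgrade weak-$*$ convergence in $\mathcal{H}^1$ to weak convergence in the Sobolev scale inside that argument; it is not the ingredient that gives $u\in C_0$ in case~(d). The $C_0$ conclusion is obtained separately (Theorem~9.3) by showing directly, via the atomic decomposition of $\mathcal{H}^1$, that $\Lambda^{-N}:\mathcal{H}^1(\mathbb{R}^N)\to C_0(\mathbb{R}^N)$ is bounded.
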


\begin{proof}
The statement follows as a consequence of Theorems~\ref{exh1}, \ref{exl1}, \ref{exlp}, \ref{exul1}, \ref{exulp}
and Corollary~\ref{corc0}.
\end{proof}

\begin{remark}
Note that, in case (d), $m=k$ always, so problem~\eqref{bvphessian}-\eqref{bcons} reduces to
\begin{subequations}
\begin{eqnarray}\nonumber
\left( -\Delta \right)^k u &=& S_k[-u] + \lambda f, \qquad x \in \mathbb{R}^{2k}, \\ \nonumber
u(x) &\rightarrow& 0, \quad \text{when} \quad |x| \rightarrow \infty,
\end{eqnarray}
\end{subequations}
for any $k \ge 2$.
\end{remark}

\begin{remark}
It is important to note that our methods are applicable to more general families of nonlinearities. Denote by $R_k^j(\cdot)$ the $j-$th
principal minor of order $k$. The present results hold as well if we substituted $S_k(-u)$ by $R_k^j(-u)$ in equation~\eqref{bvphessian} for any $j$.
In fact, the nonlinearities $S_k(-u)$ are just a particular linear combination of these $R_k^j(-u)$; and our theory could be constructed actually for
\emph{any} linear combination of them.
This comes from the fact that we need two main ingredients in our proofs: weak continuity of the maps $S_k$ and the fact that they also preserve the
$L^p$ and Hardy spaces the datum $f$ belongs to. The same holds, for example,
for the maps $R_k^j$, see~\cite{grafakos,grafakos1}, and for any linear combination
of them by linearity. Our main attention lies, however, in the operators $S_k$ described before due to their simple geometric meaning which is at least
not as evident for the operators $R_k^j$ or their arbitrary linear combinations.
\end{remark}

Now we describe the remainder of the article. In section~\ref{funfra} we introduce the functional framework we need in our proofs and some
notation. In section~\ref{ltheory} we developed the theory that corresponds to the linear counterpart of problem~\eqref{bvphessian}-\eqref{bcons}.
In section~\ref{topofpt} we state and prove a topological fixed point theorem that will be the main abstract tool for proving existence of
solutions to our differential problem. In section~\ref{refsobolev} we prove a refinement of the classical critical Sobolev embedding that will
be subsequently needed in the following section. These last two sections could be of independent interest and, as such, they have been written in
a self-contained fashion. Our main existence results come in section~\ref{existence}, and the local uniqueness results in section~\ref{locuniq}.
A nonlocal extension of Theorem~\ref{main} is proven in section~\ref{nproblems} and, finally, some further results regarding the weak continuity
of the branch of solutions and some extra regularity for the critical case (d) are described in section~\ref{further}.

\section{Functional Framework and Notation}
\label{funfra}

In order to build the existence theory for our partial differential equation
we need to introduce the Hardy space $\mathcal{H}^1$ in $\mathbb{R}^N$~\cite{stein} and its dual,
the space of functions of bounded mean oscillation.

\begin{definition}\label{hardyspace} Let $\Phi \in \mathcal{S}(\mathbb{R}^N)$, where $\mathcal{S}(\mathbb{R}^N)$ denotes
the Schwartz space, be a function such that $\int_{\mathbb{R}^N} \Phi \, dx =1$.
Define $\Phi_s := s^{-N} \Phi(x/s)$ for $s>0$. A locally integrable function $f$ is said to be in $\mathcal{H}^1(\mathbb{R}^N)$ if
the maximal function
$$
\mathcal{M}f(x):= \sup_{s>0} \left| \Phi_s \ast f(x) \right|
$$
belongs to $L^1(\mathbb{R}^N)$. We define the norm $\|f\|_{\mathcal{H}^1(\mathbb{R}^N)}=\|\mathcal{M} f\|_1$.
\end{definition}

\begin{remark}
There are several equivalent definitions of this space, see~\cite{steinb}.
\end{remark}

Now we introduce the space of functions of bounded mean oscillation~\cite{steinb}.

\begin{definition}
A locally integrable function $f$ is said to be in $\mathrm{BMO}(\mathbb{R}^N)$ if the seminorm (or norm in the quotient space
of locally integrable functions modulo additive constants)
$$
\|f\|_{\mathrm{BMO}(\mathbb{R}^N)}:= \sup_Q \frac{1}{|Q|}\int_Q |f(x)-f_Q| \, dx,
$$
where $|Q|$ is the Lebesgue measure of $Q$, $f_Q=\frac{1}{|Q|}\int_Q f(x) \, dx$ and the supremum is taken
over the set of all cubes $Q \subset \mathbb{R}^N$, is finite.
\end{definition}

We also need the pre-dual of the Hardy space $\mathcal{H}^1(\mathbb{R}^N)$.

\begin{definition}
We define $\mathrm{VMO}(\mathbb{R}^N)$ as the closure of $C_0(\mathbb{R}^N)$ in $\mathrm{BMO}(\mathbb{R}^N)$,
with $\| f \|_{\mathrm{VMO}(\mathbb{R}^N)}=\| f \|_{\mathrm{BMO}(\mathbb{R}^N)} \, \forall \, f \in \mathrm{VMO}(\mathbb{R}^N)$.
\end{definition}

The following functional spaces will also be useful in the construction of the existence theory.

\begin{definition}
We define the homogeneous Sobolev space $\dot{W}^{j,p}(\mathbb{R}^N)$ as the space of all measurable functions $u$ that are $j$ times weakly derivable and whose weak derivatives of $j-$th order obey
$$
\|D^j u\|_p < \infty,
$$
where $\|\cdot\|_p$ denotes the norm of $L^p(\mathbb{R}^N)$, $1 \le p \le \infty$, $j \in \mathbb{N}$.
\end{definition}

In our derivations we will need the following operators.

\begin{definition}
We define the Riesz transforms in $\mathbb{R}^N$:
$$
R_{x_j}(f)(x)= \frac{\Gamma \left(\frac{n+1}{2}\right)}{\pi^{(n+1)/2}} \,\, \text{P. V.} \int_{\mathbb{R}^N} \frac{x_j-y_j}{|x-y|^{n+1}} \, f(y) \, dy.
$$
\end{definition}

\begin{remark}
The normalization of the Riesz transforms is chosen in such a way that
$$
\mathcal{F}[R_{x_j}(f)](\xi)= \pi i \, \frac{\xi_j}{|\xi|} \, \mathcal{F}(f)(\xi).
$$
\end{remark}

Finally, we introduce two definitions relating to real numbers and their relationships.

\begin{definition}
Let $x_\alpha, y_\alpha \in \RR$ ($\alpha\in A$, $A$ some set).
We write $x \ll y$ ($x=\{x_\alpha\}_{\alpha \in A}$, $y=\{y_\alpha\}_{\alpha \in A}$)
whenever there exists a positive constant $c$ such that $x_\alpha \le c y_\alpha$ for every $\alpha\in A$.
\end{definition}

\begin{definition}
We denote $\mathbb{R}_+:=\{ x \in \mathbb{R} | x \ge 0 \}$.
\end{definition}

\section{Linear Theory}
\label{ltheory}

This section is devoted to the study of the linear problem
\begin{equation}\label{linear}
\left( -\Delta \right)^m u = \lambda f, \qquad x \in \mathbb{R}^N,
\end{equation}
where $m \in \mathbb{N}$ and we consider the ``boundary condition'' $u \to 0$ when $|x| \to \infty$.

\begin{proposition}\label{exun}
Equation~\eqref{linear} has a unique solution in the following cases:
\begin{itemize}
\item[(a)] $f \in L^p(\mathbb{R}^N), \quad 1 < p < \frac{N}{2m}, \quad m < N/2$,
\item[(b)] $f \in L^1(\mathbb{R}^N), \quad m < N/2$,
\item[(c)] $f \in \mathcal{H}^1(\mathbb{R}^N), \quad m < N/2$,
\item[(d)] $f \in \mathcal{H}^1(\mathbb{R}^N), \quad m = N/2$.
\end{itemize}
Then, respectively
\begin{itemize}
\item[(a)] $u \in L^q(\mathbb{R}^N) \cap \dot{W}^{2m,p}(\mathbb{R}^N)$,
\item[(b)] $u \in \dot{W}^{2m-\epsilon,N/(N-\epsilon)}(\mathbb{R}^N) \, \forall \, 0<\epsilon \le 2m$,
\item[(c)] $u \in L^{q'}(\mathbb{R}^N) \cap \dot{W}^{2m,1}(\mathbb{R}^N)$,
\item[(d)] $u \in L^\infty(\mathbb{R}^N) \cap \dot{W}^{2m,1}(\mathbb{R}^N)$,
\end{itemize}
where $q=Np/(N-2mp)$ and $q'=N/(N-2m)$. Moreover, in case (b), $D^{2m} u \in L^{1,\infty}(\mathbb{R}^N)$,
and, in all cases, the map $f \mapsto u$ is continuous.
\end{proposition}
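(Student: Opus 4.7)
The plan is to solve \eqref{linear} explicitly by convolution with the fundamental solution $G_m$ of the polyharmonic operator $(-\Delta)^m$, so that $u=\lambda\,G_m\ast f$. For $2m<N$ (cases (a)--(c)) this is the Riesz kernel $G_m(x)=c_{N,m}|x|^{2m-N}$, i.e.\ convolution with $G_m$ is the Riesz potential $I_{2m}$; for $2m=N$ (case (d)) the kernel is logarithmic, $G_m(x)=c_N\log|x|$ modulo an additive constant. In every case the identity $(-\Delta)^m(G_m\ast f)=f$ is a distributional computation, so the real content of the proposition is the quantitative regularity of $u$ read off these integral representations.

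Cases (a) and (b) reduce to Riesz-potential estimates together with Calder\'on--Zygmund theory. For $1<p<N/(2m)$ the Hardy--Littlewood--Sobolev inequality yields $u=\lambda I_{2m}f\in L^q$ with $q=Np/(N-2mp)$, while the top-order derivatives are written
\[
\partial_{i_1}\cdots\partial_{i_{2m}}u=\lambda\,R_{i_1}R_{i_2}\cdots R_{i_{2m-1}}R_{i_{2m}}f,
\]
a product of Riesz transforms, hence a Calder\'on--Zygmund operator bounded on $L^p$ for $1<p<\infty$; this gives the $\dot W^{2m,p}$ assertion in (a). The same formulas at $p=1$ give only the weak-type bounds $u\in L^{N/(N-2m),\infty}$ and $D^{2m}u\in L^{1,\infty}$ of case (b); the intermediate fractional scale $\dot W^{2m-\epsilon,N/(N-\epsilon)}$, $0<\epsilon\le 2m$, is obtained by factoring derivatives as Riesz-transform compositions of $I_\epsilon f$ and invoking real interpolation between the two endpoints already at our disposal.

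Case (c) upgrades the $L^1$ endpoint estimates by exploiting the Hardy-space structure: the Riesz potential $I_{2m}$ maps $\mathcal H^1$ boundedly into the strong space $L^{N/(N-2m)}$ (the Stein--Weiss refinement of HLS at the Hardy-space endpoint), and every Calder\'on--Zygmund singular integral maps $\mathcal H^1$ boundedly into $L^1$. Together these yield $u\in L^{q'}\cap\dot W^{2m,1}$. Case (d), $m=N/2$, is the genuinely delicate one, because the kernel $\log|x|$ only belongs to BMO and convolution against an arbitrary $L^1$ function is not defined pointwise. The natural route is via the atomic decomposition $f=\sum_j\mu_j a_j$ of $\mathcal H^1$: for each atom $a$ supported in a ball $B(x_0,r)$ with $\|a\|_\infty\le r^{-N}$ and $\int a\,dy=0$, set $u_a(x)=c_N\int\log|x-y|\,a(y)\,dy$ and use the cancellation $\int a=0$ to replace $\log|x-y|$ by $\log|x-y|-\log|x-x_0|$, producing after a first-order Taylor expansion of the logarithm a uniform bound $\|u_a\|_\infty\le C$ independent of the atom. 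Summing gives $\|u\|_\infty\ll|\lambda|\|f\|_{\mathcal H^1}$, while $D^{2m}u\in L^1$ follows again from the $\mathcal H^1\to L^1$ boundedness of Calder\'on--Zygmund operators. The principal technical obstacle is exactly this last case: extracting a clean pointwise $L^\infty$ control from the logarithmic kernel requires careful tracking of the atomic cancellation at both small and large scales.

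Uniqueness in each case is then immediate from a Liouville argument. If $u_1,u_2$ are two solutions in the specified class, the difference $w$ is a polyharmonic tempered distribution on $\mathbb R^N$ and lies in some $L^s$ space or vanishes at infinity; every polyharmonic tempered distribution on $\mathbb R^N$ is a polynomial, and the only polynomial compatible with those growth constraints is $0$. The continuity of the data-to-solution map $f\mapsto u$ is read off directly from the quantitative norm bounds obtained above.
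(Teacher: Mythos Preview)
Your proposal is correct and follows essentially the same architecture as the paper: write $u=\lambda\,G_m\ast f$ with the explicit fundamental solution, read off the $L^q$ (or $L^{q'}$) membership from Riesz-potential mapping properties, and obtain the top-order derivative bounds by recognizing $\partial^\alpha u$ as a composition of Riesz transforms applied to $f$, hence a Calder\'on--Zygmund operator. Uniqueness via the polyharmonic Liouville theorem is also how the paper handles it (in the remarks and Lemma~\ref{liouville} immediately following the proposition).

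The one genuine difference is in case~(d). You propose to extract $\|u\|_\infty\ll\|f\|_{\mathcal H^1}$ by atomic decomposition: for each atom $a$, use the cancellation $\int a=0$ and a first-order expansion of $\log|x-y|-\log|x-x_0|$ to get a uniform $L^\infty$ bound. This works, and in fact is exactly the argument the paper deploys later (Theorem~\ref{higherreg}) to obtain the sharper $C_0$ conclusion. For the mere $L^\infty$ bound stated here, however, the paper takes a shortcut: since $\log|\cdot|\in\mathrm{BMO}(\mathbb R^N)$ and the BMO norm is translation-invariant, the $\mathcal H^1$--BMO duality gives directly
\[
|u(x)|=|\lambda|\,\bigl|\langle G(x-\cdot),f\rangle\bigr|\le |\lambda|\,\|G\|_{\mathrm{BMO}}\,\|f\|_{\mathcal H^1},
\]
in one line. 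Your route is more laborious but has the compensating virtue of yielding decay at infinity essentially for free, which the duality argument does not.
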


\begin{proof}

The proof focuses on the range $m \ge 2$ since the case $m=1$ is classical.

{\sc Step 1.}

We start considering the auxiliary problem
\begin{equation}\label{deltasol}
\left( -\Delta \right)^m G = \delta_0, \qquad x \in \mathbb{R}^N,
\end{equation}
where $\delta_0$ is the unit Dirac mass centered at the origin. The explicit solution to this equation is well known~\cite{GGS}:
\begin{equation}\label{funsol}
G(x)= \left\{
\begin{array}{cc}
\frac{-\log|x|}{N V_N 4^{m-1} \Gamma(N/2) (m-1)!} & \text{if } N=2m, \\ \\
\frac{2 \Gamma(N/2-m)}{N V_N 4^m \Gamma(N/2)(m-1)!} \frac{1}{|x|^{N-2m}} & \text{in other case},
\end{array}
\right.
\end{equation}
where $V_N= \pi^{N/2}/\Gamma(1+N/2)$ is the volume of the $N-$dimensional unit ball, and always under the assumption $N \ge 2m$.

The unique solution to equation~\eqref{linear} is given by the convolution
\begin{equation}\label{convolution}
u= \lambda \, G * f.
\end{equation}
Now we justify that this is a well defined function in a suitable functional space.

{\sc Step 2.}

For $N > 2m$ we have $G \propto |x|^{2m-N}$, therefore $G$ defines a Newtonian potential
$$
I_{2m}(f)= \int_{\mathbb{R}^N} G(x-y) \, f(y) \, dy,
$$
and, as such, $\|I_{2m}(f)\|_q \ll \| f \|_p $, see~\cite{grafakos2}, and therefore
$$
\|u\|_q \ll |\lambda| \, \|f\|_p,
$$
where $q=Np/(N-2mp)$, in case (a). Cases (b) and (c) follow analogously.

For $N = 2m$ we have $G \propto \log|x|$ and since in this case $f \in \mathcal{H}^1(\mathbb{R}^N)$, and $\log|x| \in \text{BMO}(\mathbb{R}^N)$, it follows that
$$
\| u \|_\infty \ll |\lambda| \, \|f\|_{\mathcal{H}^1(\mathbb{R}^N)}.
$$

{\sc Step 3.}

For the regularity of $u$ it suffices to show that $D^{2m}G$ defines a singular integral operator~\cite{grafakos2}. Note that
$$
\Delta |x|^{-\alpha}= \frac{(\alpha+2-N)\alpha}{|x|^{\alpha+2}} \quad \forall \, \alpha>0.
$$
If we denote $C_{\alpha,N}:=\alpha(\alpha+2-N)$ and $K_{N,m}:= G(x) |x|^{N-2m}$ whenever $N > 2m$, we have
$$
\left( -\Delta \right)^{m-1} G(x) = (-1)^m K_{N,m} C_{N-2m,N} C_{N-2(m-1),N} \cdots C_{N-4,N} |x|^{2-N}.
$$
On the other hand, it is easy to check that
$$
\partial_{x_j x_k}^2 |x|^{2-N} = (2-N) \frac{|x|^2 \delta_{jk} - N x_j x_k}{|x|^{N+2}}.
$$
Note also the average of the numerator over the unit sphere
$$
I_{jk}= \int_{S^{N-1}} (|x|^2 \delta_{jk} - N x_j x_k) dw = \delta_{jk} |S^{N-1}|-N \int_{S^{N-1}} w_j w_k dw=0.
$$
We denote $\partial_{jk}^2 := \partial_{x_j x_k}^2$ and define the operator
$$
T_{j,k}(f):=\partial_{jk}^2(-\Delta)^{m-1}u,
$$
which is clearly a singular integral operator in $\mathbb{R}^N$. Consider now a multi-index $\alpha$, $|\alpha|=2m$, and so
$$
\partial^\alpha u = R_{j_1} R_{k_1} \cdots R_{j_{m-1}} R_{k_{m-1}} T_{j,k}(f),
$$
where $R_{j_n}$ is the Riesz transform with respect to the ${j_n}-$th coordinate, $1 \le j_n, n \le N$.
This operator is a product of singular integral operators and therefore a singular integral operator itself.
This completes the proof in the case $N > 2m$.

In the case $N=2m$ it is enough to consider $G(x)=C_N \log|x|$ and
$$
\Delta G(x) = C_N \frac{N-2}{|x|^2},
$$
and to apply the same reasoning as before.
\end{proof}

\begin{corollary}\label{corexun}
The unique solution found in Proposition~\ref{exun} fulfils:
\begin{itemize}
\item $u \in \dot{W}^{2m-\epsilon,N p/(N-\epsilon p)}(\mathbb{R}^N) \, \forall \, 0 \le \epsilon \le 2m$ in case (a).
\item $u \in \dot{W}^{2m-\epsilon,N/(N-\epsilon)}(\mathbb{R}^N) \, \forall \, 0 \le \epsilon \le 2m$ in case (c).
\item $u \in \dot{W}^{N-\epsilon,N/(N-\epsilon)}(\mathbb{R}^N) \, \forall \, 0 \le \epsilon \le N$ in case (d).
\item $D^{2m}u \in \mathcal{H}^1(\mathbb{R}^N)$ in cases (c) and (d).
\end{itemize}
\end{corollary}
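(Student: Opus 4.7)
The plan is to rewrite the solution $u=\lambda\, G*f$ from Proposition~\ref{exun} in terms of Riesz potentials and apply the Hardy--Littlewood--Sobolev (HLS) inequality throughout the intermediate range of $\epsilon$. In each of cases (a), (c), (d), the two endpoint values $\epsilon=0$ and $\epsilon=2m$ (respectively $\epsilon=N$ in case (d)) are already provided by Proposition~\ref{exun}: in (a), $\epsilon=0$ reproduces $\dot{W}^{2m,p}(\mathbb{R}^N)$ and $\epsilon=2m$ reproduces $L^{Np/(N-2mp)}(\mathbb{R}^N)$; in (c), the endpoints give $\dot{W}^{2m,1}(\mathbb{R}^N)$ and $L^{N/(N-2m)}(\mathbb{R}^N)$; in (d), they give $\dot{W}^{N,1}(\mathbb{R}^N)$ and $L^{\infty}(\mathbb{R}^N)$. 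Only the intermediate values of $\epsilon$ remain to be handled.

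To treat them, I would move to the Fourier side. Since $\widehat{G}(\xi)=c|\xi|^{-2m}$, the fractional Laplacian of $u$ of order $(2m-\epsilon)/2$ satisfies
\begin{equation*}
(-\Delta)^{(2m-\epsilon)/2} u \;=\; c'\,\lambda\, I_\epsilon * f,
\end{equation*}
where $I_\epsilon$ denotes the Riesz potential of order $\epsilon$, with kernel proportional to $|x|^{\epsilon-N}$. By the very definition of the homogeneous Sobolev space $\dot{W}^{2m-\epsilon,q}(\mathbb{R}^N)$, the corollary reduces to establishing $\|I_\epsilon*f\|_q\ll\|f\|_{X}$ for the correct exponent $q$ and data space $X$ in each case.

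This is precisely the content of HLS. For (a), the classical inequality yields $\|I_\epsilon f\|_{Np/(N-\epsilon p)}\ll\|f\|_p$ whenever $0<\epsilon<N/p$, and the hypothesis $p<N/(2m)$ ensures $2m<N/p$, so the full interval $(0,2m)$ is admissible. For (c) and (d), the analogue needed is the Hardy-space endpoint $I_\epsilon:\mathcal{H}^1(\mathbb{R}^N)\to L^{N/(N-\epsilon)}(\mathbb{R}^N)$, valid for every $0<\epsilon<N$; in (d) this covers the whole range $(0,N)$ and matches the two endpoints already furnished by the proposition. Finally, the claim $D^{2m}u\in\mathcal{H}^1(\mathbb{R}^N)$ in cases (c) and (d) follows from the factorisation already displayed in the proof of Proposition~\ref{exun}: for any multi-index $\alpha$ with $|\alpha|=2m$ one has $\partial^\alpha u = R_{j_1}R_{k_1}\cdots R_{j_{m-1}}R_{k_{m-1}}T_{j,k}(f)$, and since $T_{j,k}$ is itself a double Riesz transform (its Fourier symbol being $-\xi_j\xi_k/|\xi|^2$), the whole operator is a product of Riesz transforms, all of which are bounded on $\mathcal{H}^1(\mathbb{R}^N)$.

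The only step that goes beyond bookkeeping on top of Proposition~\ref{exun} is the $\mathcal{H}^1\to L^{N/(N-\epsilon)}$ endpoint of HLS used in (c) and (d): the naive $L^1\to L^{N/(N-\epsilon)}$ bound fails, and one must replace it by this Hardy-space version, whose proof relies on the atomic decomposition of $\mathcal{H}^1$ (or, equivalently, on the maximal-function characterisation of Definition~\ref{hardyspace}). This is the main technical obstacle, though it is standard in harmonic analysis.
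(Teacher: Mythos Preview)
Your argument is correct and is exactly the kind of reasoning the paper expects the reader to supply: in the paper the corollary is stated without proof, as an immediate consequence of Proposition~\ref{exun} together with the standard mapping properties of Riesz potentials and Riesz transforms. Two small comments are worth making.

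First, note that in this paper the homogeneous Sobolev spaces $\dot W^{j,q}(\mathbb{R}^N)$ are defined only for integer $j$, so the parameter $\epsilon$ in the corollary implicitly runs over integers $0,1,\dots,2m$ (respectively $0,1,\dots,N$). Your passage through the fractional power $(-\Delta)^{(2m-\epsilon)/2}$ is therefore slightly more general than what is actually required. For integer $\epsilon$ the cleanest phrasing is the one implicit in Step~3 of the proof of Proposition~\ref{exun}: for any multi-index $\alpha$ with $|\alpha|=2m-\epsilon$ one has, at the symbol level,
\[
\partial^\alpha u \;=\; c\,R^\alpha\bigl(I_\epsilon f\bigr),
\]
where $R^\alpha$ is a product of $|\alpha|$ Riesz transforms. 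Then HLS (or its $\mathcal H^1$ endpoint) controls $I_\epsilon f$ in the correct Lebesgue space, and the Riesz transforms are bounded on that space since the exponent $Np/(N-\epsilon p)$ (resp.\ $N/(N-\epsilon)$) lies strictly between $1$ and $\infty$ for $0<\epsilon<2m$ (resp.\ $0<\epsilon<N$). This is exactly your argument once the fractional Laplacian is replaced by the actual multi-index derivatives the definition calls for.

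Second, your justification of $D^{2m}u\in\mathcal H^1(\mathbb{R}^N)$ in cases (c) and (d) is the intended one: the factorisation displayed in the proof of Proposition~\ref{exun} exhibits $\partial^\alpha u$, $|\alpha|=2m$, as a finite composition of Riesz transforms acting on $f$, and these are bounded on $\mathcal H^1(\mathbb{R}^N)$.
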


\begin{remark}
The strict inequality $p < N/(2m)$ in case (a) of Proposition~\ref{exun} is sharp, see~\cite{grafakos2}.
\end{remark}

\begin{remark}
Note that for an odd $N < 2m$ the formula for $G$ is still given by the second line of~\eqref{funsol}.
For an even $N < 2m$ we have
$$
G(x)=\frac{(-1)^{m-N/2-1}}{N V_N 4^{m-1} \Gamma(N/2)(m-N/2)!(m-1)!} \frac{\log|x|}{|x|^{N-2m}}.
$$
In particular, note that $G$ never decays to zero when $|x| \to \infty$ whenever $N \le 2m$.
\end{remark}

\begin{remark}\label{ung}
Following the previous remark, note that $G$ is not unique since its property~\eqref{deltasol} is invariant with respect
to the addition of a $m-$polyharmonic function. However, if we consider the condition $G \to 0$ when $|x| \to \infty$,
then the above formulas become the unique solution whenever $N > 2m$, and the set of solutions becomes empty if $N \le 2m$.
Moreover, it is not clear how to fix uniqueness in this latter case~\cite{GGS}. In consequence, it is clear that formula~\eqref{convolution}
gives the unique solution to problem~\eqref{linear} for $N > 2m$. For $N=2m$ we take this formula as the definition of unique solution, but see
Remark~\ref{remun} below.
\end{remark}

\begin{lemma}\label{liouville}
Let $v$ be a $m-$harmonic function in $\mathbb{R}^N$. If $v \in \text{BMO}(\mathbb{R}^N)$, then $v$ is constant.
\end{lemma}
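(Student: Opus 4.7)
The plan is to show that $v$ must in fact be a polynomial, via a Fourier-analytic argument, and then that no non-constant polynomial can belong to $\mathrm{BMO}(\mathbb{R}^N)$. First I would verify that $v$ can be treated as a tempered distribution. Iterating the standard doubling estimate $|v_{B_{2R}(0)} - v_{B_R(0)}| \ll \|v\|_{\mathrm{BMO}(\mathbb{R}^N)}$, which is immediate from the BMO definition by an appropriate choice of comparison cubes, yields $|v_{B_R(0)}| \ll 1 + \log R$. Combined with the inner BMO bound on $v - v_{B_R(0)}$, this gives $\int_{\mathbb{R}^N} |v(x)|(1+|x|)^{-N-1}\,dx < \infty$, so $v \in \mathcal{S}'(\mathbb{R}^N)$. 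The hypothesis $(-\Delta)^m v = 0$ is to be read distributionally; by hypoellipticity of $(-\Delta)^m$, the function $v$ is automatically smooth, so there is no ambiguity.

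Taking Fourier transforms in $\mathcal{S}'(\mathbb{R}^N)$ gives $|\xi|^{2m}\widehat{v}(\xi) = 0$. Hence $\operatorname{supp}(\widehat{v}) \subset \{0\}$ and, by the classical structure theorem for distributions supported at a point, $\widehat{v}$ is a finite linear combination of derivatives of the Dirac delta at the origin. Inverting the Fourier transform shows that $v$ is a polynomial.

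It then remains to exclude polynomials of positive degree. For a polynomial $P$ of degree $d \ge 1$, the change of variables $x = Ry$ yields
\[
\frac{1}{|B_R(0)|}\int_{B_R(0)} |P(x) - P_{B_R(0)}|\,dx \;=\; \frac{1}{|B_1(0)|}\int_{B_1(0)} \left|P(Ry) - (P(R\,\cdot))_{B_1(0)}\right|\,dy.
\]
Writing $P = P_d + P_{<d}$ with $P_d$ the top-degree homogeneous part, one has $P(Ry) = R^d P_d(y) + O(R^{d-1})$, so the right-hand side equals $R^d\,C_P + O(R^{d-1})$, where $C_P := \tfrac{1}{|B_1|}\int_{B_1} |P_d(y) - (P_d)_{B_1}|\,dy$. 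Since $P_d$ is a nonzero homogeneous polynomial of positive degree it is non-constant on $B_1(0)$, so $C_P > 0$. Letting $R \to \infty$ contradicts $\|v\|_{\mathrm{BMO}(\mathbb{R}^N)} < \infty$; therefore $d = 0$ and $v$ is constant.

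The only real obstacle is the bookkeeping around distributional arguments: making precise the passage from a BMO seminorm bound to temperedness, and ensuring the implication $(-\Delta)^m v = 0 \Rightarrow |\xi|^{2m}\widehat{v} = 0$ is rigorous in $\mathcal{S}'(\mathbb{R}^N)$. Once these points are settled, the support characterization and the scaling computation on balls $B_R(0)$ are elementary.
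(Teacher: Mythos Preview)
Your proof is correct and follows the same Fourier-analytic route as the paper: both take the Fourier transform of $(-\Delta)^m v=0$, deduce that $\widehat{v}$ is supported at the origin, and conclude that $v$ is a polynomial. The only notable differences are at the two endpoints. First, you are more explicit about why $v$ is tempered, via the logarithmic growth of BMO averages; the paper simply asserts $\widehat{v}\in\mathcal{S}^*(\mathbb{R}^N)$. Second, to rule out non-constant polynomials the paper invokes the John--Nirenberg inequality (super-logarithmic growth is incompatible with BMO), whereas you give a direct scaling computation on balls $B_R(0)$. Your version is more self-contained and avoids appealing to John--Nirenberg, at the cost of a short extra calculation; either argument is entirely adequate here.
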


\begin{proof}
By definition, $v$ being $m-$harmonic means $\left( -\Delta \right)^m v=0$. Transforming Fourier this equation yields
$$
|k|^{2m} \hat{v}(k)=0,
$$
and since $v \in \text{BMO}(\mathbb{R}^N)$ then $\hat{v}(k) \in \mathcal{S}^*(\mathbb{R}^N)$, where $\mathcal{S}^*(\mathbb{R}^N)$ denotes
the space of Schwartz distributions. This equation implies the support of $\hat{v}$
$$
\text{supp}(\hat{v}) \subset \{0\},
$$
and therefore
$$
\hat{v} = \sum_{|\alpha| \le \ell} C_\alpha \partial^\alpha \delta_0,
$$
for some $\ell \in \mathbb{N}$, $C_\alpha \in \mathbb{R}$, and where $\alpha$ denotes a $N-$dimensional multi-index.
Consequently $v$ is polynomial of degree $\ell$ or lower.
We conclude invoking the John-Nirenberg theorem,
that implies that functions showing a super-logarithmic growth do not belong to $\text{BMO}(\mathbb{R}^N)$,
see~\cite{grafakos2}.
\end{proof}

\begin{remark}
The proof of Lemma~\ref{liouville} actually implies that any $m-$harmonic function in $\mathbb{R}^N$ showing a
sub-linear growth when $|x| \to \infty$ is constant.
\end{remark}

\begin{remark}\label{remun}
Following Remark~\ref{ung}, we note that a way to fix the uniqueness of the fundamental solution in the critical case $N=2m$ is to impose an at most
logarithmic growth when $|x| \to \infty$. According to Lemma~\ref{liouville} this fixes the fundamental solution except for the presence of
an additive constant. Of course, as we are looking for solutions in $\text{BMO}(\mathbb{R}^N)$, and the seminorm of this space is invariant with respect to
the addition of a constant, this fixes uniqueness in the corresponding quotient space in which this seminorm becomes a norm.
In other words, the solution to~\eqref{linear}, $u= \lambda \, G * f$, is unique even if we considered $G$ as a one-parameter family of fundamental solutions
indexed by an additive constant, given that functions in the Hardy space $\mathcal{H}^1(\mathbb{R}^N)$ have zero mean.
Note also that our definition of solution does not guarantee \textit{a priori} that the solution will obey the ``boundary condition'' in any reasonable
sense. However, it obeys it in the pointwise sense, which is the strongest possible sense.
This is justified by Theorem~\ref{higherreg} and Corollary~\ref{corc0} below.
\end{remark}

\section{A topological fixed point theorem}
\label{topofpt}

We now state the fixed point theorem that will allow us to construct the existence theory for our partial differential equation.
This result can be regarded as a corollary of the more general Schauder-Tychonoff theorem~\cite{bonsall}. For the reader
convenience we include a proof of the result, which is independent of the proof present in~\cite{bonsall}.

\begin{theorem}\label{fpt}
Let $\mathcal{Y}$ be a real dual Banach space with separable predual and let $\Upsilon \subset \mathcal{Y}$ be non-empty, convex and weakly$-*$ sequentially compact.
If there exist a weakly$-*$ sequentially continuous map $Z: \Upsilon \longrightarrow \Upsilon$ then $Z$ has at least one fixed point.
\end{theorem}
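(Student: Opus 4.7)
The plan is to exploit the separability of the predual to metrize the weak-$*$ topology on $\Upsilon$, thereby reducing the statement to a Schauder-type fixed point theorem for continuous self-maps of compact convex sets. The cost of this reduction is essentially just bookkeeping; no deep new ideas are needed.

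First I would verify that $\Upsilon$ is norm-bounded in $\mathcal{Y}$. If it were not, one could pick $\{y_n\} \subset \Upsilon$ with $\|y_n\|_{\mathcal{Y}} \to \infty$; weak-$*$ sequential compactness would yield a subsequence converging weak-$*$ to some element of $\Upsilon$, but any weak-$*$ convergent sequence in the dual $\mathcal{Y}$ is automatically norm-bounded (apply the uniform boundedness principle to the family $\{y_{n_k}\}$ acting on the predual), contradicting $\|y_{n_k}\| \to \infty$. Next, letting $\mathcal{X}$ denote the predual and $\{x_n\}_{n \in \mathbb{N}}$ a countable dense subset of its closed unit ball, I would introduce
$$
d(y,y') := \sum_{n=1}^{\infty} 2^{-n} \frac{|\langle x_n,\, y - y' \rangle|}{1 + |\langle x_n,\, y - y' \rangle|}.
$$
A standard verification shows that on any norm-bounded subset of $\mathcal{Y}$, and in particular on $\Upsilon$, this metric induces exactly the relative weak-$*$ topology.

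Since $\Upsilon$ is weak-$*$ sequentially compact and the weak-$*$ topology on $\Upsilon$ is metrizable, $(\Upsilon, d)$ is a compact metric space; by the same metrizability, weak-$*$ sequential continuity of $Z$ is equivalent to continuity of $Z: (\Upsilon, d) \to (\Upsilon, d)$. Moreover $\Upsilon$ is convex in the linear structure of $\mathcal{Y}$, a property independent of the topology. At this point I would conclude by invoking the Schauder-Tychonoff theorem in the locally convex space $(\mathcal{Y}, \sigma(\mathcal{Y}, \mathcal{X}))$, which grants a fixed point of $Z$ in $\Upsilon$.

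To keep the proof self-contained (as in the statement before the theorem), I would instead run the classical finite-dimensional approximation argument directly on the metric space $(\Upsilon, d)$: total boundedness provides, for each $k \in \mathbb{N}$, a finite $1/k$-net $\{y_1^k, \ldots, y_{N_k}^k\} \subset \Upsilon$; a partition of unity subordinate to the corresponding $d$-balls produces a continuous map $Z_k : \Upsilon \to \mathrm{conv}\{y_j^k\} \subset \Upsilon$ with $d(Z_k(y), Z(y)) < 1/k$ for all $y \in \Upsilon$. Brouwer's theorem in the finite-dimensional convex set $\mathrm{conv}\{y_j^k\}$ yields a fixed point $y_k = Z_k(y_k)$, and a weak-$*$ convergent subsequence $y_k \to y_\infty \in \Upsilon$ together with continuity of $Z$ in $d$ lets one pass to the limit and obtain $Z(y_\infty) = y_\infty$. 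The main obstacles are the careful metrizability lemma on bounded weak-$*$ sets and, if the self-contained path is taken, checking that the partition-of-unity approximations indeed preserve the codomain $\Upsilon$ (this is where convexity of $\Upsilon$ is crucially used).
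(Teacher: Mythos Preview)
Your proposal is correct and follows essentially the same route as the paper: metrize the weak-$*$ topology on the (norm-bounded) set $\Upsilon$ via a dense sequence in the predual unit ball, build Schauder projections onto the convex hull of a finite $\delta$-net, apply Brouwer to the composite map on that finite-dimensional simplex, and pass to the limit along a weak-$*$ convergent subsequence of approximate fixed points. The only cosmetic differences are your choice of the bounded metric $\sum 2^{-n}\frac{|\langle x_n,\cdot\rangle|}{1+|\langle x_n,\cdot\rangle|}$ in place of the paper's seminorm $\sum 2^{-n}|\langle y_n,\cdot\rangle|$, and your explicit justification of norm-boundedness via the uniform boundedness principle, which the paper asserts more briefly.
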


\begin{proof}
By our hypothesis, every convex, bounded and weakly$-*$ sequentially closed set in $\mathcal{Y}$ is compact (by the Theorem of Banach-Alaoglu)\footnote{Note, however, that strongly closed, convex and bounded is not enough. To see this, consider $\mathcal{Y}=\textbf{M}(\mathbb{R}^n)$ the space of finite Radon measures, which is the dual of $(C_0(\mathbb{R}^n),\|\cdot\|_\infty)$. Now, consider the map $T:\mathcal{Y}\mapsto\mathcal{Y}$ given by $T(\mu)=\mu\ast\mu$. It is not difficult to show that this non-linear map is weak$-*$ sequentially continuous, and maps the simplex $S=\{\mu|\,\mu\ge0,\|\mu\|_\mathcal{Y}=1\}$ into itself. This is a convex weak$-*$ closed and bounded set, and $T$ maps $S$ into itself; the delta function is the unique fixed point of it, but $T$ also maps into itself $S'=\{\mu|\,\mu\ge0,\|\mu\|_\mathcal{Y}=1,\,\mu\ {\rm absolutely\ continuous\ w.r.t.}\,dx\}=L^1(\mathbb{R},dx)\cap S$, which is strongly closed, convex and bounded, but without fixed points.},
and moreover, the trace over that set of the weak$-*$ topology is metrizable. As a result, such a set can be considered a compact metrizable space with respect to that topology; notice in particular that compactness is equivalent to sequential compactness for such $\Upsilon$.

Let us recall how this metric is defined: if we denote by $\mathcal{X}^\ast \equiv \mathcal{Y}$ our dual Banach space,
and $\{y_n\}_{n\ge1}$ is a denumerable dense subset of the closed unitary ball $B$ of the predual $\mathcal{X}$,
we define another seminorm $\|\cdot\|^\ast$ in $\mathcal{X}^\ast$ as
$$
\|x\|^\ast=\sum_{n\ge1}2^{-n}|\langle x,y_n\rangle|,\ x\in \mathcal{X}^\ast.
$$
It is readily checked that the standard norm $\|\cdot\|_{\mathcal{X}^\ast}$ dominates this seminorm, and because of the density of the set $\{y_n\}_{n\ge1}$ over the unit ball of $\mathcal{X}$ and the fact that the weak$-*$ topology is Hausdorff, it is indeed a norm, and it is not hard to prove that it induces the weak$-*$ topology over strongly closed balls of $\mathcal{X}^\ast$, or, more generally, over strongly closed convex sets of $\mathcal{X}^\ast$ (which are known to be weak$-*$ sequentially compact).
Now, since $\Upsilon$ is weakly$-*$ compact then it is totally bounded in the metric which induces the weak$-*$ topology and also bounded with respect to the strong or norm topology. Therefore for any $\delta>0$ we may choose a finite set
$\{v_1, \cdots, v_{n_\delta} | v_i \in \Upsilon, 1 \le i \le n_\delta \}$ such that
$$
\Upsilon \subset \bigcup_{1\le i\le n_\delta} B_{v_i}(\delta),
$$
where $B_{v_i}(\delta)$ is the open ball in $\mathcal{Y}$ (open with respect to the metric induced by $\|\cdot\|^\ast$) whose center is $v_i$ and whose radius is $\delta$.
Consider
$$
\Upsilon_\delta := \left\{ \left. \sum_{i=1}^{n_\delta} c_i v_i \right| c_i \in \mathbb{R}_+ \wedge \sum_{i=1}^{n_\delta} c_i=1 \right\}.
$$
The convexity of $\Upsilon$ guarantees $\Upsilon_\delta \subset \Upsilon$. We introduce the projector
$\mathcal{P}_\delta: \Upsilon \longrightarrow \Upsilon_\delta$,
$$
\mathcal{P}_\delta[v]:= \frac{\sum_{i=1}^{n_\delta} \lambda_i(v)v_i}
{\sum_{i=1}^{n_\delta} \lambda_i(v)},\ \lambda_i(v):=d\left(v,\Upsilon\setminus B_{v_i}(\delta) \right),
$$
where $d(\cdot,\cdot)$ is the distance induced by the norm $\|\cdot\|^\ast$. Any of the functions $\lambda_i(v)$ is Lipschitz continuous and non-negative, and at least one of these functions is positive:
indeed, if $v\in B_{v_i}(\delta)$, then, it is immediate that $\lambda_i(v)\ge\delta$.

Therefore the sum of all of them is positive, and we obtain as a result that
this projection is well defined and continuous for $v \in \Upsilon$. Moreover, as a consequence of the triangle inequality, we have, for $v\in\Upsilon$,
\begin{equation}\label{projineq}
\| \mathcal{P}_\delta[v] -v \|^\ast \le \frac{\sum_{i=1}^{n_\delta} \lambda_i(v) \|v_i-v\|^\ast}
{\sum_{i=1}^{n_\delta} \lambda_i(v)} \le \delta,
\end{equation}
since, for a given $1\le i\le n_\delta$, either $v\in B_{v_i}(\delta)$, in whose case $\|v-v_i\|^\ast<\delta$ or else $v\notin B_{v_i}(\delta)$, in whose case $\lambda_i(v)=0$
(meaning that $\mathcal{P}_\delta[v]$ can be thought of as an small perturbation of the identity map over the set $\Upsilon$ in the metric induced by $\|\cdot\|^\ast$); it is clear also that $\mathcal{P}_\delta[v]$ maps the set $\Upsilon$ to the finite-dimensional set $\Upsilon_\delta$.

Now we define the map $Z_\delta: \Upsilon_\delta \longrightarrow \Upsilon_\delta$,
$$
Z_\delta(v):=\mathcal{P}_\delta[Z(v)],
$$
which is well defined whenever $v \in \Upsilon_\delta$ and continuous. Since $\Upsilon_\delta$ is the closed convex hull of the set $\{v_1,\cdots,v_{n_\delta}\}$
then it is homeomorphic to the closed unit ball in $\mathbb{R}^{j_\delta}$ for some $j_\delta \le n_\delta$.
Now invoke the Brouwer fixed point theorem~\cite{munkres} to see there exists at least one fixed point,
$v_\delta \in \Upsilon_\delta$, of $Z_\delta$.

Taking a sequence $0<\delta_k \to 0$ and select for each $k\ge1$ a fixed point $v_k\in\Upsilon_{\delta_k}\subset \Upsilon$ of $Z_{\delta_j}$. By weak$-*$ compactness of $\Upsilon$, there exists a subsequence $v_{k_j},\,j\ge1$ of the sequence $v_k,\,k\ge1$ which is weak$-*$ convergent to some $v\in\Upsilon$, or in other terms, $\|v-v_{k_j}\|^\ast\rightarrow 0,\,j\rightarrow\infty$. Let us check that $v$ is a fixed point of $Z$:
\begin{eqnarray*}
\|v-Z(v)\|^\ast&=&\|(v-v_{k_j})+(P_{\delta_{k_j}}(Z(v_{k_j}))-Z(v_{k_j})))+(Z(v_{k_j})-Z(v))\|^\ast\\ &&[{\rm since}\ v_{k_j}=Z_{\delta_{k_j}}(v_{k_j})=P_{\delta_{k_j}}(Z(v_{k_j}))]\\
&\le&\|v-v_{k_j}\|^\ast+\|P_{\delta_{k_j}}(Z(v_{k_j}))-Z(v_{k_j})\|^\ast+\|Z(v_{k_j})-
Z(v)\|^\ast\\
&\le& \|v-v_{k_j}\|^\ast+\delta_{k_j}+\|Z(v_{k_j})-
Z(v)\|^\ast\\
&&[{\rm by\ equation~\eqref{projineq}}]\\
&\rightarrow0&,\,j\rightarrow\infty,
\end{eqnarray*}
where, in the last step, we use the weak$-*$ sequential continuity of the map $Z$. So, $\|v-Z(v)\|^\ast=0$, which is equivalent to $v=Z(v)$, as claimed.
\end{proof}

\section{Refinement of the critical Sobolev embedding}
\label{refsobolev}

In this section we introduce a series of preparatory results which are needed in our existence proofs. These
constitute in fact a refinement of the classical Sobolev embedding at the critical dimensional index. Consequently,
this section has an interest on its own, and therefore we have written it in a self-contained fashion.

\begin{theorem}\label{refsobem}
Consider the homogeneous Sobolev space $X=\dot{W}^{1,N}(\mathbb{R}^N)=\{f\in S'(\mathbb{R}^N): |\nabla f|\in L^N(\mathbb{R}^N)\}$, normed by $\|f\|_X=\||\nabla f|\|_{L^N(\mathbb{R}^N)}$. Then we have for all spatial dimensions $N \ge 1$:
\begin{enumerate}
	\item There exists a finite constant $C$ such that for all $f\in X$,
	$$
	\|f\|_{\text{BMO}(\mathbb{R}^N)}\le C \|f\|_X.
	$$
	\item If, in addition, $|\nabla f|\in \mathcal{H}^N(\mathbb{R}^N)$, we have $f\in \text{VMO}(\mathbb{R}^N)$. In any event there exists some absolute and finite $C$, such that given a ball $B=B_r(x_0),\,r>0,x_0\in\mathbb{R}^N$,
	$$
	|f-f_B|_B\le C\||\nabla f|\|_{L^N(B)};\ f_B:=\frac{1}{|B|}\int_B f \,dx.
	$$
\end{enumerate}
\end{theorem}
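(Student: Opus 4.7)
The plan is to reduce both assertions to a single local mean-oscillation bound obtained from the classical Poincar\'e inequality. For any ball $B=B_r(x_0)\subset\mathbb{R}^N$, the $L^1$ Poincar\'e inequality $\int_B |f-f_B|\,dy \le C_1 r \int_B |\nabla f|\,dy$ combined with H\"older's inequality with exponents $N$ and $N/(N-1)$ yields
$$
\frac{1}{|B|}\int_B |f-f_B|\,dy \;\le\; C_2\,r\,|B|^{-1/N}\,\||\nabla f|\|_{L^N(B)} \;=\; C\,\||\nabla f|\|_{L^N(B)},
$$
since $r\,|B|^{-1/N}$ is a dimensional constant. This is precisely the quantitative local estimate appearing in the last display of part (2), and it will serve as the workhorse for the entire proof.

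Part (1) follows immediately by majorizing $\||\nabla f|\|_{L^N(B)}$ by $\||\nabla f|\|_{L^N(\mathbb{R}^N)}=\|f\|_X$ and then taking the supremum over balls. The resulting seminorm is defined over balls rather than cubes, but the two are equivalent up to a universal constant (any cube is sandwiched between two concentric balls of comparable radii), which converts the ball estimate into the cube-BMO seminorm used in the paper's definition.

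For the VMO assertion in part (2) the plan is a density argument directly reflecting the paper's definition of $\mathrm{VMO}(\mathbb{R}^N)$ as the closure of $C_0(\mathbb{R}^N)$ in $\mathrm{BMO}(\mathbb{R}^N)$. Given $\varepsilon>0$, I would approximate $|\nabla f|$ using an atomic (or molecular) decomposition afforded by the Hardy-type hypothesis $|\nabla f|\in\mathcal{H}^N(\mathbb{R}^N)$, integrate up a finite truncation of this decomposition to produce a function $g\in C_0(\mathbb{R}^N)$ that approximates $f$, and apply the local estimate of the first paragraph to $f-g$. The BMO distance $\|f-g\|_{\mathrm{BMO}}$ is then controlled by the $L^N$-norm of the tail of the decomposition, which can be made smaller than $\varepsilon$ by choosing the truncation sufficiently large.

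The main obstacle is the VMO conclusion. The embedding $\dot W^{1,N}\hookrightarrow\mathrm{BMO}$ and the local quantitative bound in part (2) are essentially textbook consequences of Poincar\'e--H\"older, so Steps 1--2 should take only a few lines each. What is delicate is that the local estimate only controls oscillation by the \emph{local} $L^N$-mass of $|\nabla f|$, and an arbitrary $\dot W^{1,N}$ function gives no automatic decay of that mass as $r\to\infty$ or as $|x_0|\to\infty$; it is precisely the extra cancellation encoded in the Hardy-space hypothesis that supplies the uniform approximation by $C_0$ functions. Accordingly, I expect the bulk of the proof to consist of turning that hypothesis into a quantitative decomposition argument, whereas the estimates themselves are immediate.
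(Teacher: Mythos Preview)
Your treatment of Part (1) and the local quantitative bound is correct and essentially coincides with the paper's: Poincar\'e plus H\"older yields $|f-f_B|_B \le C\||\nabla f|\|_{L^N(B)}$, and the global BMO bound follows by majorizing and taking a supremum. (The paper applies the $L^N$-Poincar\'e inequality first and then H\"older on the left, but this is cosmetic.)

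The gap is in Part (2). Your plan is to take an atomic decomposition of the \emph{scalar} $|\nabla f|$ in $\mathcal{H}^N$, truncate, and then ``integrate up'' the truncation to produce $g\in C_0$. But a finite sum of atoms of $|\nabla f|$ is just a scalar function; it is not a gradient field and there is no way to anti-differentiate it to obtain a function $g$ whose gradient is close to $\nabla f$. Even if you decompose the components $\partial_j f$ separately, the truncated vector will not be curl-free, so again cannot be integrated. Thus the step that is supposed to manufacture the approximant $g\in C_0$ does not work as written.

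The paper resolves exactly this difficulty by passing through $\Lambda=(-\Delta)^{1/2}$. It first shows (using your local estimate together with the Riesz-transform characterization of $\mathcal{H}^N$) that $\Lambda^{-1}:\mathcal{H}^N\to\mathrm{BMO}$ is bounded, and separately that the set $\mathcal{D}$ of Schwartz functions whose Fourier transform is supported away from the origin is dense in $\mathcal{H}^N$. Since $\Lambda^{-1}(\mathcal{D})\subset\mathcal{S}\subset C_0$, one approximates $\Lambda f\in\mathcal{H}^N$ by $g_j\in\mathcal{D}$ and obtains $\Lambda^{-1}g_j\to f$ in BMO with $\Lambda^{-1}g_j\in C_0$, whence $f\in\mathrm{VMO}$. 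The point is that $\Lambda^{-1}$ supplies a canonical ``integration'' from the derivative level back to the function, which your atomic scheme lacks. If you want a more elementary route (for $N\ge 2$, where $\mathcal{H}^N=L^N$), you could instead prove directly that $C_c^\infty$ is dense in $\dot W^{1,N}$ and then invoke Part (1); but that is a different argument from the one you sketched.
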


\begin{remark}
While Part (1) of this theorem is classical, we shall give a proof of it for the sake of completeness.
\end{remark}

\begin{remark}
As $|\nabla f|^Ndx$ can be regarded as a finite and absolutely continuous measure with respect to Lebesgue measure $dx$,
for any $\varepsilon>0$, $\exists \, \delta>0$ such that if $0<r\le\delta$, $|f-f_B|_B\le\varepsilon$, where $r$ is the radius of $B$.
\end{remark}

\begin{remark}\label{remvmo}
For any dimension $N \ge 2$, $\mathcal{H}^N(\mathbb{R}^N)=L^N(\mathbb{R}^N)$. So, an immediate corollary of this theorem can be stated as follows: $\forall \, N \ge 2,\,\dot{W}^{1,N}(\mathbb{R}^N)\subseteq \text{VMO}(\mathbb{R}^N)$, with continuous inclusion.
\end{remark}

\begin{remark}
Note on the other hand that $\mathcal{H}^1(\mathbb{R}^N) \subsetneq L^1(\mathbb{R}^N)$. It is also easy to find functions
$f:\mathbb{R} \longrightarrow \mathbb{R}$ such that $f \in \dot{W}^{1,1}(\mathbb{R})$ and
$f \not\in \text{VMO}(\mathbb{R})$ (such as $f(\cdot)=\arctan(\cdot)$). But however it holds that
$\dot{W}^{1,1}(\mathbb{R}) \subset AC(\mathbb{R}) \cap L^\infty(\mathbb{R})$.
\end{remark}

\begin{remark}
The space $\text{VMO}(\mathbb{R}^N)$ can be defined either intrinsically as the space of those $\text{BMO}(\mathbb{R}^N)$ functions such that for any given $\varepsilon>0$, there exists $\delta>0$ and $R>0$ such that if a ball $B=B_r(x_0)$ has radius smaller that $\delta$ or bigger than $R$, then $|f-f_B|_B\le\varepsilon$ or extrinsically as the closure of the space $C_0(\mathbb{R}^N)$ under the $\text{BMO}(\mathbb{R}^N)$ norm; as Claim (2) of our theorem shows, any function in our space $X$ is very close to be a $\text{VMO}(\mathbb{R}^N)$ function and the averages of the mean oscillation over small balls are always small. This is an intrinsical estimate, but to close the proof of the claim we shall hinge on the extrinsical description of $\text{VMO}(\mathbb{R}^N)$ instead.
\end{remark}

\begin{proof} The key ingredient in Part (1) of the above Theorem is Poincar\'e inequality: given a ball $B$ and an exponent $1\le p\le\infty$, we have, for some finite $C=C(p,B)$,
\begin{equation}\label{poincare}
\|f-f_B\|_{L^p(B)}\le C(p,B)\||\nabla f|\|_{L^p(B)},\ f\in C^1(B).
\end{equation}
The above inequality can be closed to all the (inhomogeneous) Sobolev spaces $W^{1,p}(B)$ in the range $1\le p<\infty$ by an standard density argument; in the case $p=N$, it is easily checked that equation~\eqref{poincare} is scale invariant, meaning that the constant $C_N(B):=C(N,B)$ indeed only depends on $N$, and not on the ball $B_r(x_0)$ we are in. In other words, we have
\begin{equation}\label{critpoin}
\|f-f_B\|_{L^N(B)}\le C_N \||\nabla f|\|_{L^N(B)},\ f\in X.
\end{equation}
From this, the continuous embedding in Claim (1) follows: fix $f\in X$ and $B$ a ball in $\mathbb{R}^N$. Then we have
\begin{eqnarray*}
|f-f_B|_B&\le& \|f-f_B\|_{L^N(B)} \\
&\le& C_N\||\nabla f|\|_{L^N(B)} \\
&\le& C_N\||\nabla f|\|_{L^N(\mathbb{R}^N)},
\end{eqnarray*}
where the first inequality follows by H\"{o}lder inequality and the second by~\eqref{critpoin};
so taking the supremum over all balls in $\mathbb{R}^N$ we find Claim (1) of our theorem follows and moreover the same argument yields the sharper estimate $|f-f_B|_B\le C_N\||\nabla f|\|_{L^N(B)}$.

Now we remind the definition of the (real) Hardy space
$\mathcal{H}^p(\mathbb{R}^N),\,0<p<\infty$; first fix a bump function $\varphi\in C^\infty_c(\mathbb{R}^N)$ with total mass one, and consider the mollifiers $\varphi_t:=t^{-n}\varphi(t^{-1} \, \cdot),\,t>0$. Then we have the following:

\begin{definition}
The Hardy space $\mathcal{H}^p(\mathbb{R}^N)$ is the space of those tempered distributions $f \in S^*(\mathbb{R}^N)$ such that the maximal operator
$$
M^\ast f=\sup_{t>0}|(\varphi_t\ast f)|\in L^p(\mathbb{R}^N).
$$
\end{definition}

\begin{remark}
Notice that this definition in fact does not depend on the choice of $\varphi$.
\end{remark}

Now we use the following Lemmata:

\begin{lemma}\label{density}
For $0<p<\infty$, the space ${\mathcal D}$ of Schwartz functions such that $\hat{f}$ is supported away from the origin is dense in $\mathcal{H}^p(\mathbb{R}^N)$.
\end{lemma}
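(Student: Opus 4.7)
The approach is to approximate an arbitrary $f \in \mathcal{H}^p(\mathbb{R}^N)$ by Schwartz functions whose Fourier transform vanishes in a neighborhood of the origin, by means of a two-sided Fourier-side truncation combined with a standard density reduction.

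Fix a radial cutoff $\chi \in C_c^\infty(\mathbb{R}^N)$ with $\chi \equiv 1$ on $\{|\xi| \le 1\}$ and $\mathrm{supp}(\chi) \subset \{|\xi| \le 2\}$, and for $n \in \mathbb{N}$ set $m_n(\xi) := \chi(\xi/n) - \chi(n\xi) \in C_c^\infty(\mathbb{R}^N)$. Observe that $m_n$ vanishes on $\{|\xi| \le 1/n\}$ and is supported in $\{|\xi| \le 2n\}$. Let $A_n$ and $B_n$ denote the inverse Fourier transforms of $\chi(\xi/n)$ and $\chi(n\xi)$ respectively, so $A_n(x) = n^N \check\chi(nx)$ is a concentrating Schwartz approximate identity and $B_n(x) = n^{-N} \check\chi(x/n)$ a spreading Schwartz kernel, both with unit integral. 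Define the candidate approximant by $\widehat{f_n} := m_n \widehat{f}$, equivalently $f_n = A_n * f - B_n * f$.

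First I reduce to the case $f \in \mathcal{S}(\mathbb{R}^N) \cap \mathcal{H}^p(\mathbb{R}^N)$, which is dense in $\mathcal{H}^p(\mathbb{R}^N)$ by the classical mollification argument (convolve with a Schwartz approximate identity and apply dominated convergence to the grand maximal function, with majorant $2 M^\ast f \in L^p$). For such an $f$, $\widehat{f} \in \mathcal{S}$, hence $\widehat{f_n} = m_n \widehat{f} \in C_c^\infty(\mathbb{R}^N)$, which gives $f_n \in \mathcal{S}(\mathbb{R}^N)$ with Fourier support inside the annulus $\{1/n \le |\xi| \le 2n\}$, bounded away from the origin. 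Consequently $f_n \in \mathcal{D}$, and it remains to verify $\|f - f_n\|_{\mathcal{H}^p} \to 0$ as $n \to \infty$.

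To that end, decompose $f - f_n = (f - A_n * f) + B_n * f$ and treat the two summands separately. The first summand tends to zero in $\mathcal{H}^p$ by the standard approximate-identity convergence at the level of the grand maximal function. For the second, one needs $\|B_n * f\|_{\mathcal{H}^p} \to 0$: in the range $p > 1$ this follows from $\mathcal{H}^p \simeq L^p$ together with the change-of-variables identity $(B_n * f)(x) = \int \check\chi(z) f(x - nz) \, dz$, which tends to $0$ in $L^p$ by dominated convergence for Schwartz $f$; in the range $0 < p \le 1$ one invokes the atomic decomposition $f = \sum_j \lambda_j a_j$ of $\mathcal{H}^p$ and exploits the vanishing-moment condition on atoms to derive quantitative pointwise decay of $B_n * a_j$ in $n$, which is then summed in the Hardy (quasi-)norm. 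The main obstacle is precisely this last low-frequency estimate for $p \le 1$: the vanishing-moment structure of $\mathcal{H}^p$ atoms is essential, since without cancellation the naive bound $\|B_n * f\|_1 \le \|\check\chi\|_1 \|f\|_1$ is uniform in $n$ but non-decaying, and hence insufficient to force the Hardy norm to vanish.
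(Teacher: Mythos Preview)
Your overall strategy---reduce to $f\in\mathcal{S}\cap\mathcal{H}^p$, then strip a neighbourhood of the origin on the Fourier side---is the same as the paper's. The paper also first passes to a Schwartz approximant and then applies a low-frequency cut $M_s$ with $[M_s(g)]^\wedge=(1-\Theta(s\cdot))\hat g$; your two-sided cut $m_n$ differs only in that you also truncate high frequencies, which is harmless (indeed redundant once the approximant is Schwartz) but not wrong.

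There is, however, a genuine gap in your reduction step. Convolving $f\in\mathcal{H}^p$ with a Schwartz mollifier yields a $C^\infty$ function, \emph{not} a Schwartz function: mollification gives no spatial decay beyond what $f$ already has. Thus the sentence ``which is dense in $\mathcal{H}^p$ by the classical mollification argument'' does not deliver an element of $\mathcal{S}$, and without $f\in\mathcal{S}$ you cannot conclude $\widehat{f_n}=m_n\hat f\in C_c^\infty$; a priori $m_n\hat f$ is only a compactly supported tempered distribution, so $f_n$ is merely smooth of polynomial growth, not in $\mathcal{D}$. The paper repairs this in the range $p>1$ by following the mollification $S_t$ with a spatial cut-off $R_s(g)(x)=g(x)\Theta(sx)$, landing in $C_c^\infty\subset\mathcal{S}$ before applying the Fourier-side cut; you need an analogous step. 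For $0<p\le1$ the density of $\mathcal{S}\cap\mathcal{H}^p$ is still true but is not a consequence of mollification plus dominated convergence; the paper simply cites the Littlewood--Paley characterisation (Grafakos), and your later appeal to the atomic decomposition would be better placed here (finite sums of smooth compactly supported atoms lie in $C_c^\infty\cap\mathcal{H}^p$ and are dense) rather than in the estimate of $B_n\ast f$, where---once $f$ is Schwartz with the requisite vanishing moments---a direct moment-cancellation bound already suffices.
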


\begin{proof} We begin with the case $1<p<\infty$. Then $\mathcal{H}^p(\mathbb{R}^N)=L^p(\mathbb{R}^N)$, as a Corollary of the $L^p$ boundedness of the Hardy-Littlewood Maximal operator (which dominates pointwise the auxiliary $M^\ast f$ maximal operator).
If we define $S_t(f):=f\ast\varphi_t$, as it is the convolution of a Schwartz distribution and a Schwartz function, it is $C^\infty$ (see, e.~g., Grafakos~\cite{grafakos2}); and $S_t(f)\in L^p(\mathbb{R}^N)\cap C^\infty(\mathbb{R}^N)$ because $|S_t(f)(x)|\le M^\ast f(x)$. Since $S_t(f)\rightarrow f,\,t\searrow 0$, both in $L^p(\mathbb{R}^N)$ and pointwise almost everywhere (which is a corollary of the Lebesgue Differentiation Theorem and the Dominated Convergence Theorem), it follows that $L^p(\mathbb{R}^N)\cap C^\infty(\mathbb{R}^N)$ is dense in $L^p(\mathbb{R}^N)$. Fix now $\Theta\in C^\infty_c(\mathbb{R}^N)$ such that $\Theta=1$ if $|x|\le1/2$ and $\Theta=0$ if $|x|\ge 1$ and consider the operator
	$$
	R_s(f)(x)=f(x)\Theta(sx),\ s>0.
	$$
It is immediate that $R_s(f)\rightarrow f,\,s\searrow 0$, again both in $L^p$ and pointwise. Moreover, $R_s(f)\rightarrow 0,\,s\rightarrow \infty$, in $L^p$ and pointwise for $x \neq 0$. For a given $\varepsilon>0$, $\exists \, t>0$ such that $\|f-S_t(f)\|_{L^p(\mathbb{R}^N)}\le\varepsilon/2$. For such $t>0$, $\exists \, s>0$ such that $\|S_t(f)-R_s[S_t(f)]\|_{L^p(\mathbb{R}^N)}\le\varepsilon/2$ so that $\|f-R_s[S_t(f)]\|_{L^p(\mathbb{R}^N)}\le\varepsilon$. As $R_s[S_t(f)]\in C^\infty_c(\mathbb{R}^N)\subset S(\mathbb{R}^N)$, it follows that $S(\mathbb{R}^N)$ is dense in $L^p(\mathbb{R}^N),\,1<p<\infty$.

Fix $\varepsilon>0$. Then, $\exists \, g\in S(\mathbb{R}^N)$ with $\|f-g\|_p\le\varepsilon/2$. Consider the operators $M_s(f)$
given by $[M_s(f)]^\wedge:= \hat{f}-R_s(\hat{f})=[1-\Theta(s \, \cdot)]\hat{f}$,
so ${\rm supp}\,[M_s(f)]^\wedge \subset \{\xi\in\mathbb{R}^N:|\xi|\ge 1/(2s)\}$. By Fourier Inversion
$$
M_s(f)=f-\left(\check{\Theta}_{s} \ast f \right); \quad \check{\Theta}_{s}(\cdot):= s^{-N} \check{\Theta}(s^{-1}\, \cdot).
$$
Since the Fourier transform preserves $S(\mathbb{R}^N)$,
it follows that $M_s(f)\in{\mathcal D},\,s>0$, if $f \in S(\mathbb{R}^N)$.
And since $\check{\Theta}_{t},t>0$, define, like the family $\varphi_t$, a standard approximation of identity, it follows that $M_s(f)\rightarrow f$ in $L^p$ as $s \rightarrow \infty$. Picking $s>0$ so that $\|h-M_s(h)\|_p\le\varepsilon/2$, we obtain $\|f-M_s(h)\|_p\le\varepsilon$, which concludes the proof of the Lemma in the range $1<p<\infty$.

In the case $0<p\le 1$, the result follows as a corollary of the Littlewood-Paley square function characterization of the spaces $\mathcal{H}^p(\mathbb{R}^N)$; we refer to Grafakos~\cite{grafakos2}, Chapter 6, for the details.
\end{proof}

\begin{lemma}\label{boundedness}
Let $\Lambda=(-\Delta)^{1/2}$ in the spectral sense (see also section~\ref{nproblems}). Then, for any $N \ge 1$,
	$$
	\Lambda^{-1}:\, \mathcal{H}^N(\mathbb{R}^N) \longrightarrow \text{BMO}(\mathbb{R}^N)
	$$
boundedly.
\end{lemma}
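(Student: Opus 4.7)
The plan is to derive the claim from Theorem~\ref{refsobem}(1) via the operator identity $\nabla \Lambda^{-1} = c_N (R_{x_1}, \ldots, R_{x_N})$ combined with the boundedness of the Riesz transforms on Hardy spaces. First I reduce to the dense subclass $\mathcal{D}$ provided by Lemma~\ref{density}: if $f \in \mathcal{D}$, then $|\xi|^{-1} \hat{f}(\xi)$ is Schwartz, so $\Lambda^{-1} f$ is well defined and smooth, and differentiation under the Fourier transform gives
$$
\widehat{\partial_{x_j} \Lambda^{-1} f}(\xi) \;=\; i\,\xi_j\,|\xi|^{-1}\,\hat{f}(\xi) \;=\; c_N\,\widehat{R_{x_j} f}(\xi),
$$
identifying $\partial_{x_j}\Lambda^{-1}f$ with a constant multiple of $R_{x_j}f$.

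Next I invoke the classical boundedness of the Riesz transforms on Hardy spaces, $R_{x_j} : \mathcal{H}^p(\mathbb{R}^N) \to \mathcal{H}^p(\mathbb{R}^N)$ for every $0 < p < \infty$, together with the continuous inclusion $\mathcal{H}^N(\mathbb{R}^N) \hookrightarrow L^N(\mathbb{R}^N)$, which is in fact an identification for $N \ge 2$ by Remark~\ref{remvmo}. This yields
$$
\bigl\| |\nabla \Lambda^{-1} f| \bigr\|_{L^N(\mathbb{R}^N)} \;\ll\; \|f\|_{\mathcal{H}^N(\mathbb{R}^N)},
$$
so $\Lambda^{-1} f \in X = \dot{W}^{1,N}(\mathbb{R}^N)$. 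Part (1) of Theorem~\ref{refsobem} then delivers
$$
\|\Lambda^{-1} f\|_{\text{BMO}(\mathbb{R}^N)} \;\ll\; \bigl\| |\nabla \Lambda^{-1} f| \bigr\|_{L^N(\mathbb{R}^N)} \;\ll\; \|f\|_{\mathcal{H}^N(\mathbb{R}^N)},
$$
which is the desired bound on $\mathcal{D}$. The operator $\Lambda^{-1}$ then extends uniquely to all of $\mathcal{H}^N(\mathbb{R}^N)$ by density of $\mathcal{D}$ (Lemma~\ref{density}) and the completeness of the quotient $\text{BMO}(\mathbb{R}^N)/\mathbb{R}$ under the BMO seminorm, with the extended operator still satisfying the displayed inequality.

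The main obstacle I anticipate is the case $N=1$. For $N \ge 2$ one has $\mathcal{H}^N = L^N$ and the required Riesz transform bound is the classical Calder\'on--Zygmund $L^p$ theorem, so the argument is routine. When $N=1$ the sole Riesz transform becomes the Hilbert transform, and one needs its boundedness on $\mathcal{H}^1(\mathbb{R})$, a strict subspace of $L^1(\mathbb{R})$; this is a finer endpoint result (proved via the atomic characterization or via the grand maximal function) but it is also classical. A secondary technical point is that $\text{BMO}(\mathbb{R}^N)$ is not separable, so some care is needed in the density extension; however, quotienting by constants yields a genuine Banach space, and the extension is then the standard bounded-linear-operator extension on Banach spaces.
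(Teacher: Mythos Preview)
Your proposal is correct and follows essentially the same route as the paper: both arguments identify $\nabla\Lambda^{-1}f$ with the vector of Riesz transforms of $f$, use the Riesz-transform characterization of $\mathcal{H}^N$ to bound $\||\nabla\Lambda^{-1}f|\|_{L^N}$ by $\|f\|_{\mathcal{H}^N}$, and then pass to BMO via the scale-invariant Poincar\'e inequality (you package this last step as Theorem~\ref{refsobem}(1), while the paper redoes the ball-by-ball estimate directly). The only cosmetic differences are that the paper invokes the square-function form $\bigl\|(\sum_j |R_j f|^2)^{1/2}\bigr\|_{L^N}\approx\|f\|_{\mathcal{H}^N}$ rather than boundedness of each $R_j$ separately, and that it works directly with $f\in\mathcal{H}^N$ without the preliminary reduction to $\mathcal{D}$.
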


\begin{proof} Given $f\in \mathcal{H}^N(\mathbb{R}^N)$ and a ball $B$ in $\mathbb{R}^N$, for $N\ge1$, using the H\"{o}lder inequality and the Poincar\'{e} inequality for the exponent $N$,
	\begin{eqnarray*}
	|\Lambda^{-1}f-(\Lambda^{-1}f)_B|_B&\le& C_N \left\| \left| \nabla[\Lambda^{-1}(f)] \right| \right\|_{L^N(B)}\\
	&\le& C_N \left\| \left(\sum_{j=1}^{N}|R_j(f)|^2 \right)^{1/2}\right\|_{L^N(\mathbb{R}^N)} \\
	&\le& C'_N\|f\|_{\mathcal{H}^N(\mathbb{R}^N)},
	\end{eqnarray*}
where $R_j$ is the $j-$th Riesz Transform. The last estimate follows since it is a classical result in Fourier Analysis that $\left\| \left(\sum_{j=1}^{N}|R_j(f)|^2 \right)^{1/2}\right\|_{L^p(\mathbb{R}^N)}$ is equivalent to the norm of the Hardy space $\mathcal{H}^p(\mathbb{R}^N)$ in any dimension $N$ and for any exponent $1\le p<\infty$ (we refer again to Grafakos~\cite{grafakos2}).
\end{proof}

Now we can finish the proof of the main theorem of this section: given $f\in X$, there exists a sequence $g_j\in{\mathcal D}$ such that $g_j\rightarrow \Lambda f,\,j\rightarrow\infty$ in $\mathcal{H}^N(\mathbb{R}^N)$ (by Lemma~\ref{density}). Now because $g_j\in{\mathcal D}$, $[\Lambda^{-1}(g_j)]^{\wedge}(\xi)=c_N|\xi|^{-1} \hat{g_j}(\xi);\,\xi\ne 0$. Since for $g\in{\mathcal D}\subset L^p(\mathbb{R}^N),\,1\le p\le\infty$, $\Lambda^{-1}f\in L^q(\mathbb{R}^N),\,q>N$ by the classical Sobolev Embedding Theorem, and this rules out the possibility of a singular support at $\xi=0$ of $(\Lambda^{-1}g)^{\wedge}$. As a result, for $g\in{\mathcal D}$, $(\Lambda^{-1}g)^\wedge$ is also in ${\mathcal D}$; it follows that $\Lambda^{-1}g$ is a Schwartz function, and since the Fourier transform preserves this class, so it belongs too to $S(\mathbb{R}^N)\subset C_0(\mathbb{R}^N)$. Since by Lemma~\ref{boundedness}, $\Lambda^{-1}:\, \mathcal{H}^N(\mathbb{R}^N) \rightarrow
\text{BMO}(\mathbb{R}^N)$ is continuous, given $f\in X$, $f$ belongs to the closure of $C_0(\mathbb{R}^N)$ in $\text{BMO}(\mathbb{R}^N)
$, which is $\text{VMO}(\mathbb{R}^N)$.
\end{proof}

\section{Existence results}
\label{existence}

Now we introduce the general theoretical framework in which our existence results follow. For the sake of clarity, we divide this section
into three subsections corresponding each to the different type of data we are interested in. Our key theoretical tool will be the combination
of the results we have proven in the previous sections with suitable weak continuity properties of the $k-$Hessian. We note that related properties
were studied in the past by several authors, see for
instance~\cite{ball,brezis1,brezis2,brezis3,brezis4,brezis5,coifman,dacorogna,giaquinta,hajlasz,iwaniec1,iwaniec2,iwaniec3,morrey,
muller1,muller2,muller3,muller4,muller5,reshetnyak}.

\subsection{$\mathcal{H}^1$ data}

We start this first subsection introducing a series of technical results which will be of use in the remainder of the section.

\begin{lemma}\label{mvalues}
If $\psi \in \dot{W}^{2m-\delta,N/(N-\delta)}(\mathbb{R}^N) \, \forall \, 0 \le \delta \le 2m-2$ for $m=1 + N (k-1)/(2k) \in \mathbb{N}$ then $S_k[\psi] \in \mathcal{H}^1(\mathbb{R}^N)$.
\end{lemma}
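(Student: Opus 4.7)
The plan is to exploit the null Lagrangian (Jacobian) structure of $S_k$ and invoke a Coifman--Lions--Meyer--Semmes (CLMS) type Hardy space estimate. The first step is to extract from the hypothesis the single piece of integrability that will matter: evaluating at $\delta = 2m - 2$, and using $m = 1 + N(k-1)/(2k)$, one has $N - (2m-2) = N/k$, so
$$\psi \in \dot{W}^{2m-(2m-2),\, N/(N-(2m-2))}(\mathbb{R}^N) = \dot{W}^{2,k}(\mathbb{R}^N),$$
i.e.\ $D^2 \psi \in L^k(\mathbb{R}^N)$. At this integrability H\"older already places $S_k[\psi]$ in $L^1(\mathbb{R}^N)$; the content of the lemma is the upgrade to $\mathcal{H}^1$, and this is precisely where cancellation must be exploited.

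The second step is to rewrite $S_k$ in Jacobian form. As the sum of principal $k \times k$ minors of the Hessian,
$$S_k[\psi] = \sum_{\substack{I \subset \{1,\dots,N\} \\ |I| = k}} \det\bigl((\partial_i \partial_j \psi)_{i,j \in I}\bigr),$$
and for $I = \{i_1 < \cdots < i_k\}$, introducing $\phi^I := (\partial_{i_1}\psi, \ldots, \partial_{i_k}\psi) : \mathbb{R}^N \to \mathbb{R}^k$, the $I$-principal minor is exactly the $k \times k$ subdeterminant of the Jacobian matrix $D\phi^I$ obtained by selecting the rows indexed by $I$. Thus each summand has the structure of a Jacobian subdeterminant of a map into $\mathbb{R}^k$.

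The third step is to invoke the CLMS theorem and its extension to subdeterminants (see \cite{coifman,iwaniec1,iwaniec2,iwaniec3}): if $\phi : \mathbb{R}^N \to \mathbb{R}^k$ satisfies $\nabla \phi_\beta \in L^k(\mathbb{R}^N)$ for every $\beta$, then every $k \times k$ subdeterminant of $D\phi$ lies in $\mathcal{H}^1(\mathbb{R}^N)$, with norm $\ll \prod_{\beta} \|\nabla \phi_\beta\|_{L^k}$. Since the components $\nabla \partial_{i_\beta}\psi$ are controlled by $D^2 \psi \in L^k$ from Step 1, this applies to each $\phi^I$, so every principal minor belongs to $\mathcal{H}^1(\mathbb{R}^N)$. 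Summing over the $\binom{N}{k}$ index sets delivers
$$\|S_k[\psi]\|_{\mathcal{H}^1(\mathbb{R}^N)} \ll \|D^2 \psi\|_{L^k(\mathbb{R}^N)}^k < \infty.$$
The main obstacle is having the CLMS estimate in the precise form used here (subdeterminants of a map $\mathbb{R}^N \to \mathbb{R}^k$ rather than the full Jacobian of a map $\mathbb{R}^N \to \mathbb{R}^N$); if not cited directly, one can derive it by writing each minor in divergence form through cofactor expansion and closing via $\mathcal{H}^1$--BMO duality against a test function and H\"older's inequality, or via the direct maximal-function argument of \cite{coifman}. It is worth noting that of the full scale of Sobolev regularity provided by the hypothesis, only the endpoint $\delta = 2m - 2$ is used in this proof.
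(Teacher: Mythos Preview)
Your argument is correct and follows essentially the same route as the paper. The paper's proof is terse: it notes $S_k[\psi]\in L^1$ by Sobolev/H\"older and then upgrades to $\mathcal{H}^1$ by invoking the divergence form $S_k[\psi]=\tfrac{1}{k}\sum_{i,j}\partial_{x_i}(\psi_{x_j}S_k^{ij}[\psi])$ together with Theorem~I of Grafakos (and the CLMS circle of ideas), whereas you package the same cancellation as a sum of $k\times k$ Jacobian subdeterminants of $\nabla\psi$ and cite CLMS/Iwaniec--Onninen directly. These are two presentations of the same null-Lagrangian mechanism; your version has the merit of making explicit that only the endpoint $\delta=2m-2$ (i.e.\ $D^2\psi\in L^k$) is actually needed.
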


\begin{proof}
It is clear that $S_k[\psi] \in L^1(\mathbb{R}^N)$ for $\psi \in \dot{W}^{2m-\delta,N/(N-\delta)}(\mathbb{R}^N) \, \forall \, 0 \le \delta \le 2m-2$ as a direct consequence of a suitable Sobolev embedding when necessary.
The improved regularity in the statement follows from the divergence form of the $k-$Hessian (see equation~\eqref{divform} below)
and Theorem~I in~\cite{grafakos1}, see also~\cite{grafakos,coifman}.
\end{proof}

\begin{remark}\label{remvalues}
We find admissible values of $m$ whenever
\begin{itemize}
\item $N$ is a multiple of $2k$.
\item $N$ is odd, $N$ is a multiple of $k$ and $k$ is odd.
\end{itemize}
For example, when $N=2k$ we always find the admissible value $m=N/2$. Note also that, as we are assuming $N, k \ge 2$, then $m \ge 2$,
so we are always treating with polyharmonic, rather than harmonic, problems.
\end{remark}

\begin{proposition}\label{wcont}
$S_k[\cdot]$ is weakly$-*$ sequentially continuous from $\dot{W}^{2m,1}(\mathbb{R}^N)$ to the Hardy space $\mathcal{H}^1(\mathbb{R}^N)$,
provided $m=1 + N (k-1)/(2k) \in \mathbb{N}$. That is, if
$$
\psi_n \rightharpoonup \psi; \qquad \text{weakly in} \, \dot{W}^{2m,1}(\mathbb{R}^N),
$$
then
$$
S_k[\psi_n] \overset{*}{\rightharpoonup} S_k[\psi]; \qquad \text{weakly$-*$ in} \, \mathcal{H}^1(\mathbb{R}^N).
$$
\end{proposition}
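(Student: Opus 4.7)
The plan is to combine the Jacobian representation of the $k$-Hessian with the Coifman--Lions--Meyer--Semmes (CLMS) theory of Jacobians in Hardy spaces that was already invoked in Lemma~\ref{mvalues}. Every $k \times k$ principal minor of the Hessian matrix can be rewritten as a $k$-fold Jacobian of first derivatives,
$$
\det\!\bigl(\partial_{i_a i_b}\psi\bigr)_{a,b=1}^{k}
\;=\;\det\!\bigl(\partial_{i_b}(\partial_{i_a}\psi)\bigr)_{a,b=1}^{k},
$$
so $S_k[\psi]$ is a finite linear combination of such Jacobians. With the arithmetic choice $m=1+N(k-1)/(2k)$, Corollary~\ref{corexun} and critical Sobolev interpolation give $D^{2}\psi\in L^{k}(\mathbb{R}^N)$ whenever $\psi\in\dot{W}^{2m,1}(\mathbb{R}^N)$; combined with Theorem~I of~\cite{grafakos1} (see also~\cite{coifman,grafakos}) this already yields the uniform Hardy-space bound $\|S_k[\psi_n]\|_{\mathcal{H}^1(\mathbb{R}^N)}\ll\|\psi_n\|_{\dot{W}^{2m,1}(\mathbb{R}^N)}^{k}$.

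To identify the limit, suppose $\psi_n\rightharpoonup\psi$ in $\dot{W}^{2m,1}(\mathbb{R}^N)$. Interpolating via the embeddings of Corollary~\ref{corexun} one obtains $\partial^{\alpha}\psi_n\rightharpoonup\partial^{\alpha}\psi$ in the natural $L^{p_\alpha}(\mathbb{R}^N)$ for every $|\alpha|\le 2m$ (in particular $D^{2}\psi_n\rightharpoonup D^{2}\psi$ in $L^{k}$), and Rellich--Kondrachov upgrades this to strong convergence in $L^{p_\alpha}_{\mathrm{loc}}(\mathbb{R}^N)$ for every $|\alpha|<2m$. I then test $S_k[\psi_n]$ against an arbitrary $\phi\in C_c^\infty(\mathbb{R}^N)$ and exploit the divergence form of the $k$-Hessian to integrate by parts once: this transfers one derivative off the Jacobian onto $\phi$ and rewrites the bulk of the integrand as a product of a $(k-1)$-fold Jacobian (a polynomial of degree $k-1$ in $D^2\psi_n$, bounded in $L^{k/(k-1)}$ by H\"older) and a first derivative $\partial_{i}\psi_n$ that is strongly convergent on the support of $\phi$. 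The classical Reshetnyak/Ball/M\"uller distributional continuity of Jacobians, applied to the remaining $(k-1)$-fold factor (see~\cite{muller1,muller2,reshetnyak,iwaniec1}), then yields $\int S_k[\psi_n]\,\phi\,dx\to\int S_k[\psi]\,\phi\,dx$, i.e., $S_k[\psi_n]\to S_k[\psi]$ in $\mathcal{D}'(\mathbb{R}^N)$.

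The final step is to upgrade distributional to weak$-\ast$ convergence in $\mathcal{H}^1$. Since $\{S_k[\psi_n]\}$ is bounded in $\mathcal{H}^1$, Banach--Alaoglu applied to the separable predual $\mathrm{VMO}(\mathbb{R}^N)$ extracts a weakly$-\ast$ convergent subsequence; the distributional identification of the limit as $S_k[\psi]$, together with the density of $C_c^\infty(\mathbb{R}^N)$ in $\mathrm{VMO}(\mathbb{R}^N)$, promotes this (via a standard subsequence-of-subsequence/$\varepsilon/3$ argument) to weak$-\ast$ convergence of the whole sequence. The main obstacle I expect is the non-reflexivity of $\dot{W}^{2m,1}$, which forbids harvesting compactness directly on the top-order derivatives; the divergence/Jacobian structure of $S_k$ is precisely the tool that bypasses this difficulty by routing the weak convergence onto a lower-order factor which is strongly convergent by Rellich, leaving only bounded, higher-order factors paired against a test function in the dual space.
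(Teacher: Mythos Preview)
Your proposal is correct and follows essentially the same three-step architecture as the paper: (i) uniform $\mathcal{H}^1$ bound via the Jacobian/CLMS structure (this is Lemma~\ref{mvalues}); (ii) distributional convergence obtained by testing against $\phi\in C_c^\infty$, integrating by parts once via the divergence form $S_k[\psi]=\tfrac{1}{k}\sum_{i,j}\partial_i(\psi_j S_k^{ij}[\psi])$, and using Rellich--Kondrachov on $\mathrm{supp}\,\phi$; (iii) upgrade from $\mathcal{D}'$ to weak$-\ast$ in $\mathcal{H}^1$ by density of $C_c^\infty$ in $\mathrm{VMO}$ together with the uniform $\mathcal{H}^1$ bound.

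The only noteworthy difference is in step (ii). You treat the factor $S_k^{ij}[\psi_n]$ as a $(k-1)$-fold Jacobian that is merely \emph{bounded} in $L^{k/(k-1)}$ and then appeal to Reshetnyak/Ball/M\"uller distributional continuity to handle it. The paper avoids this extra layer entirely: since $m=1+N(k-1)/(2k)>1$, the embedding $\dot{W}^{2m,1}\hookrightarrow\dot{W}^{2,k}$ has strictly positive differential order, so Rellich--Kondrachov gives \emph{strong} local convergence of $D^2\psi_n$ (in $L^p_{\mathrm{loc}}$ for any $p<k$), not just of $D\psi_n$. Hence $S_k^{ij}[\psi_n]\to S_k^{ij}[\psi]$ strongly in $L^{p/(k-1)}_{\mathrm{loc}}$, and the whole integrand $\phi_i(\psi_n)_j S_k^{ij}[\psi_n]$ converges strongly in $L^1$ by straightforward H\"older estimates. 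Your route works, but the inductive appeal to weak continuity of the lower-order minor is unnecessary; the extra derivatives built into the choice of $m$ already buy you strong convergence of all factors.
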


\begin{proof}
Since $\left[\text{VMO}(\mathbb{R}^N)\right]^*=\mathcal{H}^1(\mathbb{R}^N)$ the statement means that whenever $\psi_n \rightharpoonup \psi$
weakly in $\dot{W}^{2m,1}(\mathbb{R}^N)$
then
$$
\int_{\mathbb{R}^N} \varphi \, S_k[\psi_n] \, dx \to \int_{\mathbb{R}^N} \varphi \, S_k[\psi] \, dx \quad \forall \, \varphi \in \text{VMO}(\mathbb{R}^N).
$$
Note that $S_k[\psi_n], S_k[\psi] \in \mathcal{H}^1(\mathbb{R}^N)$ by Lemma~\ref{mvalues}.
We start proving weak sequential continuity in the sense of distributions
\begin{equation}\label{wcdist}
\psi_n \rightharpoonup \psi \quad \text{weakly in} \, \dot{W}^{2m,1}(\mathbb{R}^N) \Rightarrow
S_k[\psi_n] \to S_k[\psi] \quad \text{in} \, \mathcal{D}^*(\mathbb{R}^N).
\end{equation}
Fix $\phi \in C^\infty_c(\mathbb{R}^N)$ and compute
\begin{equation}\label{intparts}
\int_{\mathbb{R}^N} \phi \, S_k[\psi_n] \, dx = - \frac{1}{k} \sum_{i,j} \int_{\mathbb{R}^N} \phi_i (\psi_n)_j S^{ij}_k[\psi_n] \, dx,
\end{equation}
where we have used integration by parts and the divergence form of the $k-$Hessian
\begin{equation}\label{divform}
S_k[\psi]=\frac{1}{k}\sum_{i,j} \partial_{x_i}(\psi_{x_j} S_k^{ij}[\psi]),
\end{equation}
see~\cite{wang}, where
$$
S^{ij}_k(D^2 \psi) = \left. \frac{\partial}{\partial a_{ij}} \sigma_k[\Lambda(A)] \right|_{A=D^2 \psi},
$$
where $\Lambda(A)$ are the eigenvalues of the $N \times N$ matrix $A$ which entries are $a_{ij}$, and we remind
the definition of the $k-$Hessian $S_k[\psi]=\sigma_k(\Lambda)$ where
$$
\sigma_k(\Lambda)= \sum_{i_1<\cdots<i_k} \Lambda_{i_1} \cdots \Lambda_{i_k},
$$
is the $k^{\mathrm{th}}$ elementary symmetric polynomial and $\Lambda=(\Lambda_1,\cdots,\Lambda_n)$ are the eigenvalues of the Hessian matrix $(D^2 \psi)$.
Now
\begin{eqnarray}\nonumber
\lim_{n \to \infty} \int_{\mathbb{R}^N} \phi \, S_k[\psi_n] \, dx &=& - \lim_{n \to \infty} \frac{1}{k} \sum_{i,j}
\int_{\mathbb{R}^N} \phi_i (\psi_n)_j S^{ij}_k[\psi_n] \, dx, \\ \nonumber
&=& - \frac{1}{k} \sum_{i,j}
\int_{\mathbb{R}^N} \phi_i (\psi)_j S^{ij}_k[\psi] \, dx, \\ \nonumber
&=& \int_{\mathbb{R}^N} \phi \, S_k[\psi] \, dx,
\end{eqnarray}
where the first and third equalities follow from~\eqref{intparts} and the second from the Rellich-Kondrachov theorem that states that
weak convergence
$$
\psi_n \rightharpoonup \psi \; \text{weakly in} \; \dot{W}^{2m,1}(\mathbb{R}^N)
$$
implies
$$
\psi_n \rightarrow \psi \; \text{strongly in} \; \dot{W}^{2,(2N-k)k(k-1)/[(2N-k)k-2(N-k)]}_{\text{loc}}(\mathbb{R}^N)
$$
and
$$
\psi_n \rightarrow \psi \; \text{strongly in} \; \dot{W}^{1,(2N-k)k/(2N-2k)}_{\text{loc}}(\mathbb{R}^N),
$$
if $k \neq N$. If $k=N$, then
$$
\psi_n \rightarrow \psi \; \text{strongly in} \; \dot{W}^{2,N-1/2}_{\text{loc}}(\mathbb{R}^N)
$$
and
$$
\psi_n \rightarrow \psi \; \text{strongly in} \; \dot{W}^{1,2N-1}_{\text{loc}}(\mathbb{R}^N).
$$
So~\eqref{wcdist} is proven.

Given that $C^\infty_c(\mathbb{R}^N)$ is dense in $\text{VMO}(\mathbb{R}^N)$ we may choose
an approximating family $\varphi_\epsilon \in C^\infty_c(\mathbb{R}^N)$ of $\varphi \in \text{VMO}(\mathbb{R}^N)$ such that
$\| \varphi-\varphi_\epsilon \|_{\text{VMO}(\mathbb{R}^N)} \le \epsilon$ for any $\epsilon >0$.
So it follows that
\begin{eqnarray}\nonumber
\int_{\mathbb{R}^N} \varphi \, S_k[\psi_n] \, dx - \int_{\mathbb{R}^N} \varphi \, S_k[\psi] \, dx &=& \int_{\mathbb{R}^N} \varphi_\epsilon \, S_k[\psi_n] \, dx - \int_{\mathbb{R}^N} \varphi_\epsilon \, S_k[\psi] \, dx \\ \nonumber
& & + \int_{\mathbb{R}^N} (\varphi-\varphi_\epsilon) \, S_k[\psi_n] \, dx \\ \nonumber & & - \int_{\mathbb{R}^N} (\varphi-\varphi_\epsilon) \, S_k[\psi] \, dx.
\end{eqnarray}
Since $S_k[\psi_n]$ and $S_k[\psi]$ are uniformly bounded in $\mathcal{H}^1(\mathbb{R}^N)$, we can estimate
\begin{eqnarray}\nonumber
& & \left| \int_{\mathbb{R}^N} \varphi \, S_k[\psi_n] \, dx - \int_{\mathbb{R}^N} \varphi \, S_k[\psi] \, dx \right|
\\ \nonumber &\le&
\left\{ \| S_k[\psi_n] \|_{\mathcal{H}^1(\mathbb{R}^N)} + \| S_k[\psi] \|_{\mathcal{H}^1(\mathbb{R}^N)} \right\} \\ \nonumber
& & \times \| \varphi-\varphi_\epsilon \|_{\text{VMO}(\mathbb{R}^N)} \\ \nonumber
& & + \left| \int_{\mathbb{R}^N} \varphi_\epsilon \, S_k[\psi_n] \, dx - \int_{\mathbb{R}^N} \varphi_\epsilon \, S_k[\psi] \, dx \right|,
\end{eqnarray}
and
\begin{equation}\nonumber
\limsup_{n \to \infty} \left| \int_{\mathbb{R}^N} \varphi \, S_k[\psi_n] \, dx - \int_{\mathbb{R}^N} \varphi \, S_k[\psi] \, dx \right| \le C \epsilon + o(1).
\end{equation}
The statement follows by the arbitrariness of $\epsilon$.
\end{proof}

\begin{corollary}\label{cwcont}
$S_k[\cdot]$ is weakly$-*$ continuous from the homogeneous Sobolev space
$\dot{W}^{2m-\delta,N/(N-\delta)}(\mathbb{R}^N) \, \forall \, 0 \le \delta < 2m-2$ to $\mathcal{H}^1(\mathbb{R}^N)$,
provided $m=1 + N (k-1)/(2k) \in \mathbb{N}$.
\end{corollary}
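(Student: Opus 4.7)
The plan is to replay the argument of Proposition~\ref{wcont} in the enlarged ambient space. All members of the family $\{\dot{W}^{2m-\delta, N/(N-\delta)}(\mathbb{R}^N)\}_{0 \le \delta \le 2m}$ share the common Sobolev scaling index $2m - N$, so the classical Sobolev embedding produces a continuous chain
$$\dot{W}^{2m, 1}(\mathbb{R}^N) \hookrightarrow \dot{W}^{2m-\delta, N/(N-\delta)}(\mathbb{R}^N) \hookrightarrow \dot{W}^{2m-\delta', N/(N-\delta')}(\mathbb{R}^N)$$
for $0 \le \delta \le \delta' \le 2m$. Consequently, weak convergence $\psi_n \rightharpoonup \psi$ in $\dot{W}^{2m-\delta, N/(N-\delta)}(\mathbb{R}^N)$ propagates to weak convergence in every larger member of the family; the restriction $\delta < 2m-2$ supplies a positive reserve $2m - \delta - 2 > 0$ of differentiability for the second derivatives, which will be the key structural input.

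Given this reserve, I would proceed in three steps mirroring the proof of Proposition~\ref{wcont}. First, a direct multilinear estimate on the divergence form~\eqref{divform}, in the spirit of Lemma~\ref{mvalues} and Theorem~I of~\cite{grafakos1}, yields the continuity bound $\|S_k[\psi_n]\|_{\mathcal{H}^1(\mathbb{R}^N)} \le C \|\psi_n\|^k_{\dot{W}^{2m-\delta, N/(N-\delta)}(\mathbb{R}^N)}$, and the Banach-Steinhaus principle then delivers uniform boundedness of $\{S_k[\psi_n]\}_n$ in $\mathcal{H}^1(\mathbb{R}^N)$. Second, I would establish distributional convergence $S_k[\psi_n] \to S_k[\psi]$ by integrating by parts against $\phi \in C^\infty_c(\mathbb{R}^N)$ as in~\eqref{intparts} and passing to the limit inside the integral: the Rellich-Kondrachov theorem applied within the compact support of $\phi$ yields strong local convergence of $D\psi_n$ and $D^2\psi_n$ in suitable $L^p_{\text{loc}}$ spaces, so the integrand $\phi_i(\psi_n)_j S^{ij}_k[\psi_n]$, being a polynomial in these strongly convergent factors, converges in $L^1_{\text{loc}}$.

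Third, I would upgrade distributional convergence to weak-$*$ convergence in $\mathcal{H}^1(\mathbb{R}^N) = [\text{VMO}(\mathbb{R}^N)]^*$ by the same three-term splitting employed at the end of the proof of Proposition~\ref{wcont}: given $\varphi \in \text{VMO}(\mathbb{R}^N)$ and $\epsilon > 0$, choose $\varphi_\epsilon \in C^\infty_c(\mathbb{R}^N)$ with $\|\varphi - \varphi_\epsilon\|_{\text{VMO}(\mathbb{R}^N)} \le \epsilon$, pair $S_k[\psi_n] - S_k[\psi]$ against $\varphi_\epsilon$ (handled by the distributional convergence of Step~2) and against $\varphi - \varphi_\epsilon$ (handled by the uniform bound of Step~1 via $\mathcal{H}^1$--$\text{VMO}$ duality), and then send $\epsilon \to 0$. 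I expect the main obstacle to lie in Step~2: one must verify that the specific Rellich-Kondrachov exponents produced by Sobolev embedding from $\dot{W}^{2m-\delta, N/(N-\delta)}$ are mutually compatible with the polynomial structure of $S^{ij}_k[\psi_n]$ of degree $k-1$ in the second derivatives. The condition $\delta < 2m-2$ is precisely what keeps those second derivatives strictly subcritical in the differentiability scale, preserving the local compactness needed to close the product estimate; at the endpoint $\delta = 2m-2$ this margin would vanish and a genuinely different tool would be required.
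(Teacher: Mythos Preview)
Your proposal is correct and matches the paper's intent: the corollary is stated without proof immediately after Proposition~\ref{wcont}, and the implicit argument is precisely the replay you outline. Your identification of the role of the constraint $\delta<2m-2$ is exactly right---it is what keeps $D^2\psi_n$ in a Sobolev space of strictly positive order $2m-2-\delta>0$, so that Rellich--Kondrachov yields the local strong compactness needed in Step~2; the $\mathcal{H}^1$ bound of Step~1 and the VMO density argument of Step~3 go through verbatim.
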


\begin{corollary}\label{cwcont2}
The $k-$Hessian $S_k[\cdot]$ is weakly$-*$ continuous from the homogeneous Sobolev space
$\dot{W}^{2m-\delta,N/(N-\delta)}(\mathbb{R}^N) \, \forall \, 0 \le \delta < 2m-2$
to $\bf{M}(\mathbb{R}^N)$, where $\bf{M}(\mathbb{R}^N)$ is the set of (signed) Radon measures,
provided $m=1 + N (k-1)/(2k) \in \mathbb{N}$. That is, if
$$
\psi_n \rightharpoonup \psi; \qquad \text{weakly in} \, \dot{W}^{2m-\delta,N/(N-\delta)}(\mathbb{R}^N),
$$
then
$$
S_k[\psi_n] \overset{*}{\rightharpoonup} S_k[\psi]; \qquad \text{weakly$-*$ in} \, \bf{M}(\mathbb{R}^N).
$$
\end{corollary}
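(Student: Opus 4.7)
The plan is to obtain Corollary~\ref{cwcont2} as an immediate weakening of Corollary~\ref{cwcont}, by restricting the class of admissible test functions. Recall that $\bf{M}(\mathbb{R}^N)$ is identified with the topological dual of $(C_0(\mathbb{R}^N), \|\cdot\|_\infty)$ via the Riesz representation theorem, so weak-* convergence $S_k[\psi_n] \overset{*}{\rightharpoonup} S_k[\psi]$ in $\bf{M}(\mathbb{R}^N)$ means precisely
$$
\int_{\mathbb{R}^N} \varphi \, S_k[\psi_n] \, dx \to \int_{\mathbb{R}^N} \varphi \, S_k[\psi] \, dx, \qquad \forall \, \varphi \in C_0(\mathbb{R}^N).
$$

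First, I would note that $S_k[\psi_n]$ and $S_k[\psi]$ are well-defined elements of $\bf{M}(\mathbb{R}^N)$: by Lemma~\ref{mvalues}, for the admissible values of $m$, they lie in $\mathcal{H}^1(\mathbb{R}^N)$, which is continuously included in $L^1(\mathbb{R}^N)$ and hence in $\bf{M}(\mathbb{R}^N)$. Next, I would invoke the definition of $\text{VMO}(\mathbb{R}^N)$ given in section~\ref{funfra}, namely as the closure of $C_0(\mathbb{R}^N)$ in the $\text{BMO}(\mathbb{R}^N)$ seminorm. This immediately yields the continuous inclusion $C_0(\mathbb{R}^N) \subset \text{VMO}(\mathbb{R}^N)$.

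Finally, assume $\psi_n \rightharpoonup \psi$ weakly in $\dot{W}^{2m-\delta,N/(N-\delta)}(\mathbb{R}^N)$. Corollary~\ref{cwcont} gives $S_k[\psi_n] \overset{*}{\rightharpoonup} S_k[\psi]$ in $\mathcal{H}^1(\mathbb{R}^N) = [\text{VMO}(\mathbb{R}^N)]^*$, which is equivalent to the convergence of integrals above against every $\varphi \in \text{VMO}(\mathbb{R}^N)$. Specializing to $\varphi \in C_0(\mathbb{R}^N)$, which is legitimate by the inclusion established in the preceding paragraph, produces the required weak-* convergence in $\bf{M}(\mathbb{R}^N)$.

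There is essentially no real obstacle: the content of the corollary is purely topological, amounting to the observation that the weak-* topology on $\mathcal{H}^1(\mathbb{R}^N)$ (induced by its predual $\text{VMO}(\mathbb{R}^N)$) is finer than the trace on $\mathcal{H}^1(\mathbb{R}^N)$ of the weak-* topology on $\bf{M}(\mathbb{R}^N)$ (induced by $C_0(\mathbb{R}^N)$). All the analytical work—the weak sequential continuity of $S_k[\cdot]$ through its divergence form and the compact embeddings supplied by Rellich--Kondrachov—has already been carried out in Proposition~\ref{wcont} and Corollary~\ref{cwcont}.
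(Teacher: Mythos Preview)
Your proposal is correct and matches the paper's implicit reasoning: Corollary~\ref{cwcont2} is stated in the paper without proof, as an immediate consequence of Proposition~\ref{wcont} and Corollary~\ref{cwcont}, and your argument---restricting test functions from $\text{VMO}(\mathbb{R}^N)$ to the subspace $C_0(\mathbb{R}^N)$---is precisely the intended deduction. The only point worth making explicit is that for $\varphi \in C_0(\mathbb{R}^N) \subset L^\infty(\mathbb{R}^N)$ and $S_k[\psi_n] \in \mathcal{H}^1(\mathbb{R}^N) \subset L^1(\mathbb{R}^N)$, the abstract $\text{VMO}$--$\mathcal{H}^1$ duality pairing coincides with the ordinary Lebesgue integral, so the two notions of weak-$*$ convergence are indeed being compared on the same functional.
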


Now we state the main result of this subsection:

\begin{theorem}\label{exh1}
Let $m=1 + N (k-1)/(2k) \in \mathbb{N}$.
Then problem~\eqref{bvphessian}-\eqref{bcons} has at least one weak solution in $\dot{W}^{2m,1}(\mathbb{R}^N)$ for any $N \ge 4$ and any $N/2 \ge k \ge 2$ ($N, k \in \mathbb{N}$)
provided $|\lambda|$ is small enough and $f \in \mathcal{H}^{1}(\mathbb{R}^N)$.
Moreover any such solution $u \in \dot{W}^{2m-\epsilon,N/(N-\epsilon)}(\mathbb{R}^N) \, \forall \, 0 \le \epsilon \le 2m$
and $D^{2m}u \in \mathcal{H}^1(\mathbb{R}^N)$.
\end{theorem}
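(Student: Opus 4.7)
The plan is to construct a weak solution by applying the topological fixed point Theorem~\ref{fpt} to the solution map of the linearised equation. I will take $\mathcal{Y}$ to be the Banach space (modulo $m$-harmonic polynomials) of those $\psi\in\dot{W}^{2m,1}(\mathbb{R}^N)$ with $D^{2m}\psi\in\mathcal{H}^1(\mathbb{R}^N)$, equipped with the norm $\|\psi\|_\mathcal{Y}:=\|D^{2m}\psi\|_{\mathcal{H}^1(\mathbb{R}^N)}$. Because $\mathcal{H}^1(\mathbb{R}^N)=[\text{VMO}(\mathbb{R}^N)]^*$ and $\text{VMO}(\mathbb{R}^N)$ is separable (being the $\text{BMO}$-closure of $C_0(\mathbb{R}^N)$), $\mathcal{Y}$ is a dual Banach space with separable predual, so Theorem~\ref{fpt} is applicable. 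For $R>0$ to be fixed, I will consider
$$
\Upsilon_R:=\{\psi\in\mathcal{Y}:\|\psi\|_\mathcal{Y}\le R\},
$$
which is convex and, by Banach--Alaoglu together with the metrisability used in the proof of Theorem~\ref{fpt}, weakly$-*$ sequentially compact. The map $Z:\Upsilon_R\to\mathcal{Y}$ is defined by $Z(\psi)=u$, where $u$ is the unique solution supplied by Proposition~\ref{exun}(c) and Corollary~\ref{corexun} of $(-\Delta)^m u=S_k[-\psi]+\lambda f$ in $\mathbb{R}^N$; a fixed point of $Z$ is, by construction, the desired weak solution.

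To verify self-mapping, I will first observe that the critical homogeneous Sobolev embedding $\dot{W}^{2m,1}(\mathbb{R}^N)\hookrightarrow\dot{W}^{2,k}(\mathbb{R}^N)$ holds precisely because $m=1+N(k-1)/(2k)$ equalises the scaling indices ($2m-N=2-N/k$). Hence $D^2\psi\in L^k(\mathbb{R}^N)$, and the divergence form~\eqref{divform} combined with the Coifman--Lions--Meyer--Semmes estimate invoked in Lemma~\ref{mvalues} gives
$$
\|S_k[-\psi]\|_{\mathcal{H}^1(\mathbb{R}^N)}\le C\,\|D^2\psi\|_{L^k(\mathbb{R}^N)}^k\le C'\,\|\psi\|_\mathcal{Y}^k.
$$
Inserting this into the continuity estimate of Proposition~\ref{exun}(c)--Corollary~\ref{corexun} yields
$$
\|Z(\psi)\|_\mathcal{Y}\le C''\bigl(\|S_k[-\psi]\|_{\mathcal{H}^1(\mathbb{R}^N)}+|\lambda|\,\|f\|_{\mathcal{H}^1(\mathbb{R}^N)}\bigr)\le C'''\bigl(R^k+|\lambda|\,\|f\|_{\mathcal{H}^1(\mathbb{R}^N)}\bigr).
$$
Since $k\ge 2$, choosing $R$ small enough so that $C'''R^{k-1}\le 1/2$ and then $|\lambda|$ small enough so that $C'''|\lambda|\,\|f\|_{\mathcal{H}^1}\le R/2$ secures $Z(\Upsilon_R)\subseteq\Upsilon_R$.

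For the weak$-*$ sequential continuity of $Z$, suppose $\psi_n\overset{*}{\rightharpoonup}\psi$ in $\mathcal{Y}$. Since $\mathcal{H}^1\hookrightarrow L^1$ this gives $\psi_n\rightharpoonup\psi$ weakly in $\dot{W}^{2m,1}(\mathbb{R}^N)$, whence Proposition~\ref{wcont} yields $S_k[-\psi_n]\overset{*}{\rightharpoonup}S_k[-\psi]$ in $\mathcal{H}^1(\mathbb{R}^N)$. The linear solution operator acts on $D^{2m}u$ as a composition of Riesz transforms applied to $S_k[-\psi]+\lambda f$ (as shown in Step~3 of the proof of Proposition~\ref{exun}), and this composition is weakly$-*$ continuous on $\mathcal{H}^1(\mathbb{R}^N)$. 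Hence $Z(\psi_n)\overset{*}{\rightharpoonup}Z(\psi)$ in $\mathcal{Y}$, and Theorem~\ref{fpt} produces a fixed point $u\in\Upsilon_R$. The additional regularity $u\in\dot{W}^{2m-\epsilon,N/(N-\epsilon)}(\mathbb{R}^N)$ and $D^{2m}u\in\mathcal{H}^1(\mathbb{R}^N)$ will then be read off from Corollary~\ref{corexun}(c) applied to the equation satisfied by $u$.

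The main obstacle I anticipate is the Hardy-space self-mapping estimate: controlling $S_k[-\psi]$ in $\mathcal{H}^1$ rather than merely in $L^1$ is where the divergence structure and the compensated-compactness phenomenon must be exploited, and it is precisely the arithmetic choice $m=1+N(k-1)/(2k)$ that tunes $D^2\psi$ into $L^k$ so that the $k$-fold product closes. All of the remaining ingredients---Banach--Alaoglu, separability of $\text{VMO}$, the linear mapping properties of $(-\Delta)^{-m}$, and the weak$-*$ continuity of $S_k$---have already been assembled in the preceding sections and fit together without further surprises.
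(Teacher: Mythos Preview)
Your strategy is essentially the paper's: linearise, set up a self-map on a small ball, verify weak$-*$ sequential continuity, and invoke Theorem~\ref{fpt}. The only substantive difference is the choice of coordinates (you work with $u$ in $\mathcal{Y}=\{D^{2m}u\in\mathcal{H}^1\}$, the paper works with $v=(-\Delta)^m u\in\mathcal{H}^1$; these are isomorphic via Riesz transforms), and the center of the ball ($0$ versus $\lambda f$). The self-mapping arithmetic and the final regularity reading are the same.

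There is, however, a genuine gap in your weak$-*$ continuity step. You write ``Since $\mathcal{H}^1\hookrightarrow L^1$ this gives $\psi_n\rightharpoonup\psi$ weakly in $\dot{W}^{2m,1}$.'' The embedding $\mathcal{H}^1\hookrightarrow L^1$ is continuous for the \emph{norm} topologies, but weak$-*$ convergence in $\mathcal{H}^1=(\mathrm{VMO})^*$ is tested against $\mathrm{VMO}$ functions, whereas weak convergence in $L^1$ is tested against $L^\infty$; since $L^\infty\not\subset\mathrm{VMO}$, the implication does not follow from the embedding alone. This is precisely the obstacle that Section~\ref{refsobolev} of the paper is built to overcome: the paper passes instead to the reflexive space $\dot{W}^{2m-1,N/(N-1)}$, identifies its predual as $\dot{W}^{1-2m,N}$, and uses the refined embedding $\dot{W}^{1,N}\hookrightarrow\mathrm{VMO}$ (Remark~\ref{remvmo}) to conclude that $\dot{W}^{1-2m,N}\subset I_{-2m}(\mathrm{VMO})$, so that weak$-*$ convergence in $\mathcal{H}^1$ does yield weak convergence in $\dot{W}^{2m-1,N/(N-1)}$; Corollary~\ref{cwcont} then applies. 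Your argument can be repaired along the same lines (or, more directly, by observing that the proof of Proposition~\ref{wcont} only really uses uniform $L^1$ bounds plus distributional convergence, both of which follow from weak$-*$ convergence in $\mathcal{H}^1$), but as written it skips exactly the piece of analysis that Section~\ref{refsobolev} supplies.

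Your observation that $D^{2m}Z(\psi)$ is obtained from the source by a composition of Riesz transforms, and that Riesz transforms are weak$-*$ continuous on $\mathcal{H}^1$ (being adjoints of bounded operators on $\mathrm{VMO}$), is correct and gives a clean way to handle the linear part; the paper routes this through the same $\dot{W}^{2m-1,N/(N-1)}$ duality instead.
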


\begin{proof}
Consider $w \in \dot{W}^{2m,1}(\mathbb{R}^N)$. Then $S_k[-w] \in \mathcal{H}^1(\mathbb{R}^N)$
by Lemma~\ref{mvalues} and the equation
\begin{eqnarray}\nonumber
\left( -\Delta \right)^m u &=& S_k[-w] + \lambda f, \qquad x \in \mathbb{R}^N, \\ \nonumber
u &\to& 0, \quad \text{when} \quad |x| \to \infty,
\end{eqnarray}
has a unique solution $u \in \dot{W}^{2m-\epsilon,N/(N-\epsilon)}(\mathbb{R}^N) \, \forall \, 0 \le \epsilon \le 2m$
such that $D^{2m}u \in \mathcal{H}^1(\mathbb{R}^N)$ by Corollary~\ref{corexun}.
So the map
\begin{eqnarray}\nonumber
\mathcal{T}: \mathcal{H}^1(\mathbb{R}^N) &\longrightarrow&  \mathcal{H}^1(\mathbb{R}^N) \\ \nonumber
v &\longmapsto& v' = S_k \left[\left( -\Delta \right)^{-m} (-v)\right] + \lambda f,
\end{eqnarray}
is well defined and moreover
\begin{eqnarray}\nonumber
\|v'\|_{\mathcal{H}^1(\mathbb{R}^N)} &\ll& \|S_k \left[\left( -\Delta \right)^{-m} (-v)\right]\|_{\mathcal{H}^1(\mathbb{R}^N)}
+ |\lambda| \|f\|_{\mathcal{H}^1(\mathbb{R}^N)} \\ \nonumber
&\ll& \|\left( -\Delta \right)^{-m} (-v)\|_{\dot{W}^{2m,1}(\mathbb{R}^N)}^k + |\lambda| \|f\|_{\mathcal{H}^1(\mathbb{R}^N)} \\ \nonumber
&\ll& \|v\|_{\mathcal{H}^1(\mathbb{R}^N)}^k + |\lambda| \|f\|_{\mathcal{H}^1(\mathbb{R}^N)},
\end{eqnarray}
by the triangle inequality in the first step, Lemma~\ref{mvalues} in the second, and Proposition~\ref{exun} in the third.
Now consider the particular case $v=0$, i.~e. $v_0 = \lambda f$, then obviously
$\|v_0\|_{\mathcal{H}^{1}(\mathbb{R}^N)} = |\lambda| \|f\|_{\mathcal{H}^1(\mathbb{R}^N)}$ and
\begin{equation}\nonumber
v'-v_0 = S_k \left[\left( -\Delta \right)^{-m} (-v)\right], \qquad x \in \mathbb{R}^N.
\end{equation}
Therefore
\begin{eqnarray}\nonumber
\|v'-v_0\|_{\mathcal{H}^1(\mathbb{R}^N)} &=& \|S_k \left[\left( -\Delta \right)^{-m} (-v)\right]\|_{\mathcal{H}^1(\mathbb{R}^N)} \\ \nonumber
&\ll& \|\left( -\Delta \right)^{-m} (-v)\|_{\dot{W}^{2m,1}(\mathbb{R}^N)}^k \\ \nonumber
&\ll& \|v\|_{\mathcal{H}^1(\mathbb{R}^N)}^k \\ \nonumber
&\ll& \left[ \|v - v_0\|_{\mathcal{H}^1(\mathbb{R}^N)}
+ \|v_0\|_{\mathcal{H}^1(\mathbb{R}^N)} \right]^k \\ \nonumber
&=& \left[ \|v - v_0\|_{\mathcal{H}^1(\mathbb{R}^N)}
+ |\lambda| \|f\|_{\mathcal{H}^1(\mathbb{R}^N)} \right]^k.
\end{eqnarray}
Consequently it is clear that $\mathcal{T}$ will map the ball
$$
B = \left\{ v \in \mathcal{H}^1(\mathbb{R}^N) : \|v - v_0\|_{\mathcal{H}^1(\mathbb{R}^N)} \le R \right\}
$$
into itself provided we choose $R$ and $|\lambda|$ small enough.

Now assume $\psi_j \stackrel{*}{\rightharpoonup} \psi$ in $\mathcal{H}^1(\mathbb{R}^N)$, therefore
$$
\left< \left( -\Delta \right)^{m} \phi, \left( -\Delta \right)^{-m} \psi_j \right> \longrightarrow
\left< \left( -\Delta \right)^{m} \phi, \left( -\Delta \right)^{-m} \psi \right>
$$
for any fixed $\phi \in \text{VMO}(\mathbb{R}^N)$, or equivalently
$$
\left< \hat{\phi}, \left( -\Delta \right)^{-m} \psi_j \right> \longrightarrow
\left< \hat{\phi}, \left( -\Delta \right)^{-m} \psi \right>,
$$
for any fixed $\hat{\phi} \in I_{-2m}(\text{VMO})(\mathbb{R}^N)$, with the obvious definition of $I_{-2m}(\text{VMO})(\mathbb{R}^N)$
(see for instance~\cite{stri}). By Corollary~\ref{corexun} $\left( -\Delta \right)^{-m} \psi_j \in \dot{W}^{2m-1,N/(N-1)}(\mathbb{R}^N)$,
but we need $\left( -\Delta \right)^{-m} \psi_j \rightharpoonup \left( -\Delta \right)^{-m} \psi$
in $\dot{W}^{2m-1,N/(N-1)}(\mathbb{R}^N)$; note that the first mode of convergence does not, in principle, trivially imply the second.
On the other hand the two facts $\{\dot{W}^{2m-1,N/(N-1)}(\mathbb{R}^N)\}^*\! = \dot{W}^{1-2m,N}(\mathbb{R}^N)$ and Remark~\ref{remvmo} imply
that, for $N \ge 2$, the first mode of convergence indeed implies the second.
As a consequence of this and Corollary~\ref{cwcont} the map $\mathcal{T}$ is weakly$-*$ continuous, and
consequently by Theorem~\ref{fpt} it has a fixed point.
The existence of solution follows from $u=\left( -\Delta \right)^{-m}v$ and Proposition~\ref{exun}.
The regularity follows by Sobolev embeddings.
\end{proof}

\subsection{Summable Data}
\label{l1data}

An analogous existence theorem can still be proven for data $f \in L^1(\mathbb{R}^N)$.

\begin{theorem}\label{exl1}
Let $m=1 + N (k-1)/(2k) \in \mathbb{N}$.
Then problem~\eqref{bvphessian}-\eqref{bcons} has at least one weak solution in
$\dot{W}^{2m-\epsilon,N/(N-\epsilon)}(\mathbb{R}^N) \, \forall \, 0 < \epsilon \le 2m$
for any $N \ge 8$ and any $N/2 > k \ge 2$ ($N, k \in \mathbb{N}$)
provided $|\lambda|$ is small enough and $f \in L^{1}(\mathbb{R}^N)$.
Moreover any such solution fulfills $D^{2m} u \in L^{1,\infty}(\mathbb{R}^N)$.
\end{theorem}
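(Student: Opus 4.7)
The plan is to mirror the fixed point argument used in Theorem~\ref{exh1}, but with the Hardy space $\mathcal{H}^1(\mathbb{R}^N)$ replaced by the space of finite Radon measures $\mathbf{M}(\mathbb{R}^N) = [C_0(\mathbb{R}^N)]^*$, which is adapted to $L^1$ data and, crucially, is a dual Banach space with separable predual, so that Theorem~\ref{fpt} applies. Since $L^1(\mathbb{R}^N) \hookrightarrow \mathbf{M}(\mathbb{R}^N)$ continuously, the datum $\lambda f$ naturally sits in $\mathbf{M}(\mathbb{R}^N)$. The operator to iterate is again
$$
\mathcal{T}(v) = S_k\bigl[(-\Delta)^{-m}(-v)\bigr] + \lambda f.
$$

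The first step is to extend Proposition~\ref{exun}(b) from $L^1$ data to measure data: for $v \in \mathbf{M}(\mathbb{R}^N)$ the convolution $u = G * v$ is well defined, since $G \propto |x|^{2m-N}$ is locally integrable with polynomial decay in the range $N > 2m$; arguing as in the $L^1$ case (the Riesz potential and Calder\'on--Zygmund bounds extend to measures with the total variation replacing the $L^1$ norm), one obtains $u \in \dot{W}^{2m-\epsilon, N/(N-\epsilon)}(\mathbb{R}^N)$ for every $0 < \epsilon \le 2m$, together with $D^{2m}u \in L^{1,\infty}(\mathbb{R}^N)$ and the continuity bound $\|u\|_{\dot{W}^{2m-\epsilon, N/(N-\epsilon)}} \ll \|v\|_{\mathbf{M}}$. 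Specialising to $\epsilon = 2m-2$ yields $D^2 u \in L^{N/(N-2m+2)}(\mathbb{R}^N)$, and the defining relation $m = 1 + N(k-1)/(2k)$ is precisely the algebraic identity making $k \cdot N/(N - 2m + 2) = N$; a H\"older estimate then gives $\|S_k[u]\|_{L^1} \ll \|D^2 u\|_{L^{N/(N-2m+2)}}^k \ll \|v\|_{\mathbf{M}}^k$. Thus $\mathcal{T} : \mathbf{M}(\mathbb{R}^N) \to \mathbf{M}(\mathbb{R}^N)$ with $\|\mathcal{T}(v) - \lambda f\|_{\mathbf{M}} \ll \|v\|_{\mathbf{M}}^k$, so exactly as in the proof of Theorem~\ref{exh1} the ball $B = \{v \in \mathbf{M}(\mathbb{R}^N) : \|v - \lambda f\|_{\mathbf{M}} \le R\}$ is convex, weakly$-*$ sequentially compact, and invariant under $\mathcal{T}$ provided $R$ and $|\lambda|$ are small enough (using $k \ge 2$).

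The remaining task is weak$-*$ sequential continuity of $\mathcal{T}$. If $v_n \stackrel{*}{\rightharpoonup} v$ in $\mathbf{M}(\mathbb{R}^N)$, then for any $\phi \in C^\infty_c(\mathbb{R}^N)$ the convolution $G * \phi$ lies in $C_0(\mathbb{R}^N)$ (smooth with decay of order $|x|^{2m-N}$), whence
$$
\langle \phi, G * v_n \rangle = \langle G * \phi, v_n \rangle \longrightarrow \langle G * \phi, v \rangle = \langle \phi, G * v \rangle.
$$
Combining this with the uniform bound on $\{G * v_n\}$ in $\dot{W}^{2m-\epsilon, N/(N-\epsilon)}(\mathbb{R}^N)$ and the density of $C^\infty_c$ in the dual space $\dot{W}^{\epsilon - 2m, N/\epsilon}(\mathbb{R}^N)$ upgrades the convergence to weak convergence in $\dot{W}^{2m-\epsilon, N/(N-\epsilon)}(\mathbb{R}^N)$ for every $0 < \epsilon < 2m-2$. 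Corollary~\ref{cwcont2} then supplies the required weak$-*$ continuity of $v \mapsto S_k[(-\Delta)^{-m}(-v)]$ into $\mathbf{M}(\mathbb{R}^N)$, and Theorem~\ref{fpt} delivers a fixed point $v^* \in B$. Since $v^* = S_k[-u] + \lambda f$ with both summands in $L^1(\mathbb{R}^N)$, in fact $v^* \in L^1(\mathbb{R}^N)$, and $u = (-\Delta)^{-m} v^*$ inherits all the regularity claimed in the statement from Proposition~\ref{exun}(b) and Corollary~\ref{corexun}. The main obstacle I anticipate is this upgrade of the convergence from duality against $C_0(\mathbb{R}^N)$ to duality against the Sobolev predual: the $\mathcal{H}^1$ case handled the parallel step via Remark~\ref{remvmo}, whereas here one must run a density-based argument compatible with the weaker endpoint regularity forced by $L^1$ data.
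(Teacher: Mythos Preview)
Your approach has a genuine gap at the very first step: the extension of Proposition~\ref{exun}(b) to measure data does \emph{not} yield strong Lebesgue estimates. Take $v=\delta_0\in\mathbf{M}(\mathbb{R}^N)$; then $u=G*v=G\propto|x|^{2m-N}$, so $|D^2u|\propto|x|^{2m-N-2}$. A direct computation shows $|x|^{2m-N-2}\in L^{N/(N-2m+2),\infty}$ but $|x|^{2m-N-2}\notin L^{N/(N-2m+2)}$ (the integrand is exactly $|x|^{-N}$, divergent both at the origin and at infinity). Hence for a general $v\in\mathbf{M}(\mathbb{R}^N)$ you only obtain $D^2u\in L^{N/(N-2m+2),\infty}$, the H\"older estimate $\|S_k[u]\|_{L^1}\ll\|D^2u\|_{L^{N/(N-2m+2)}}^k$ is unavailable, and Lorentz--H\"older gives merely $S_k[u]\in L^{1,\infty}\not\subset\mathbf{M}(\mathbb{R}^N)$. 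Thus $\mathcal{T}:\mathbf{M}(\mathbb{R}^N)\to\mathbf{M}(\mathbb{R}^N)$ is not well defined, and the whole scheme collapses. (Restricting to absolutely continuous measures does not help either, for exactly the reason flagged in the footnote to Theorem~\ref{fpt}.)

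The paper sidesteps this endpoint obstruction by running the fixed point at the level of the \emph{solution} in the reflexive space $\dot{W}^{2m-1,N/(N-1)}(\mathbb{R}^N)$ rather than at the level of the right-hand side. For $w$ in that space the crucial bound $D^2w\in L^{N/(N-2m+2)}$ comes from Sobolev embedding, not from the $L^1$ linear theory, and then the div--curl structure (Lemma~\ref{mvalues}) upgrades $S_k[-w]$ to $\mathcal{H}^1$. The nonlinear part of the iteration therefore feeds back through the $\mathcal{H}^1$ linear estimates of Corollary~\ref{corexun}(c), which \emph{are} strong; the weaker $L^1$ theory is invoked only once, for the fixed translate $u_0=\lambda(-\Delta)^{-m}f$. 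Weak continuity is then obtained from Corollary~\ref{cwcont} together with Remark~\ref{remvmo}, exactly the mechanism you anticipated would be the delicate point --- but it is applied in the Sobolev space, not in $\mathbf{M}(\mathbb{R}^N)$.
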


\begin{proof}
Consider $w \in \dot{W}^{2m-1,N/(N-1)}(\mathbb{R}^N)$. Then $S_k[-w] \in \mathcal{H}^1(\mathbb{R}^N)$
by Lemma~\ref{mvalues} and Remark~\ref{remvalues}, and the equation
\begin{eqnarray}\nonumber
\left( -\Delta \right)^m u &=& S_k[-w] + \lambda f, \qquad x \in \mathbb{R}^N, \\ \nonumber
u &\to& 0, \quad \text{when} \quad |x| \to \infty,
\end{eqnarray}
has a unique solution $u \in \dot{W}^{2m-\epsilon,N/(N-\epsilon)}(\mathbb{R}^N) \, \forall \, 0 < \epsilon \le 2m$
such that $D^{2m}u \in L^{1,\infty}(\mathbb{R}^N)$ by Proposition~\ref{exun}.
So the map
\begin{eqnarray}\nonumber
\mathcal{T}: \dot{W}^{2m-1,N/(N-1)}(\mathbb{R}^N) &\longrightarrow&  \dot{W}^{2m-1,N/(N-1)}(\mathbb{R}^N) \\ \nonumber
w &\longmapsto& u = \left( -\Delta \right)^{-m} S_k \left[-w\right] + \lambda \left( -\Delta \right)^{-m} f,
\end{eqnarray}
is well defined and furthermore for $g:=\left( -\Delta \right)^{-m} f$
\begin{eqnarray}\nonumber
\|u\|_{\dot{W}^{2m-1,N/(N-1)}(\mathbb{R}^N)} &\ll& \left\|\left( -\Delta \right)^{-m} S_k \left[-w\right]\right\|_{\dot{W}^{2m-1,N/(N-1)}(\mathbb{R}^N)} \\
\nonumber & & + |\lambda| \, \|g\|_{\dot{W}^{2m-1,N/(N-1)}(\mathbb{R}^N)} \\ \nonumber
&\ll& \left\| w \right\|_{\dot{W}^{2m-1,N/(N-1)}(\mathbb{R}^N)}^k \\
\nonumber & & + |\lambda| \, \|g\|_{\dot{W}^{2m-1,N/(N-1)}(\mathbb{R}^N)},
\end{eqnarray}
by the triangle inequality and Proposition~\ref{exun} in the first step, and Lemma~\ref{mvalues} and Corollary~\ref{corexun} in the second.
Now consider the particular case $w=0$, i.~e. $u_0 = \lambda \left( -\Delta \right)^{-m} f$, then clearly
$\|u_0\|_{\dot{W}^{2m-1,N/(N-1)}(\mathbb{R}^N)} = |\lambda| \, \|g\|_{\dot{W}^{2m-1,N/(N-1)}(\mathbb{R}^N)}$ and
\begin{equation}\nonumber
u-u_0 = \left( -\Delta \right)^{-m} S_k \left[-w\right], \qquad x \in \mathbb{R}^N.
\end{equation}
Therefore
\begin{eqnarray}\nonumber
\|u-u_0\|_{\dot{W}^{2m-1,N/(N-1)}(\mathbb{R}^N)} &=& \left\|\left( -\Delta \right)^{-m} S_k \left[-w\right]\right\|_{\dot{W}^{2m-1,N/(N-1)}(\mathbb{R}^N)}
\\ \nonumber
&\ll& \left\| w \right\|_{\dot{W}^{2m-1,N/(N-1)}(\mathbb{R}^N)}^k \\ \nonumber
&\ll& \left[ \left\| w - u_0 \right\|_{\dot{W}^{2m-1,N/(N-1)}(\mathbb{R}^N)} \right. \\ \nonumber
& & \left. + \left\| u_0 \right\|_{\dot{W}^{2m-1,N/(N-1)}(\mathbb{R}^N)} \right]^k \\ \nonumber
&=& \left[ \left\| w - u_0 \right\|_{\dot{W}^{2m-1,N/(N-1)}(\mathbb{R}^N)} \right. \\ \nonumber
& & \left. + |\lambda| \, \|g\|_{\dot{W}^{2m-1,N/(N-1)}(\mathbb{R}^N)} \right]^k.
\end{eqnarray}
Consequently it is clear that $\mathcal{T}$ maps the ball
$$
B = \left\{ w \in \dot{W}^{2m-1,N/(N-1)}(\mathbb{R}^N) : \|w - u_0\|_{\dot{W}^{2m-1,N/(N-1)}(\mathbb{R}^N)} \le R \right\}
$$
into itself given that we choose $R$ and $|\lambda|$ small enough.

Corollary~\ref{cwcont} implies the convergence property
\begin{equation}\label{wconl1}
\left< \phi, S_k[\psi_n] \right> \longrightarrow \left< \phi, S_k[\psi] \right>,
\end{equation}
for any fixed $\phi \in \text{VMO}(\mathbb{R}^N)$ given that $\psi_n \rightharpoonup \psi$ in $\dot{W}^{2m-1,N/(N-1)}(\mathbb{R}^N)$.
By equation~\eqref{wconl1} we get
$$
\left< \left( -\Delta \right)^{m} \phi, \left( -\Delta \right)^{-m} S_k[\psi_n] \right> \longrightarrow
\left< \left( -\Delta \right)^{m} \phi, \left( -\Delta \right)^{-m} S_k[\psi] \right>,
$$
for any fixed $\phi \in \text{VMO}(\mathbb{R}^N)$, or in other terms
$$
\left< \hat{\phi}, \left( -\Delta \right)^{-m} S_k[\psi_n] \right> \longrightarrow
\left< \hat{\phi}, \left( -\Delta \right)^{-m} S_k[\psi] \right>,
$$
for any fixed $\hat{\phi} \in I_{-2m}(\text{VMO})(\mathbb{R}^N)$, as in the previous subsection.
This mode of convergence is not, in principle, equivalent to the one we need:
$\left( -\Delta \right)^{-m} S_k[\psi_n] \rightharpoonup \left( -\Delta \right)^{-m} S_k[\psi]$
in $\dot{W}^{2m-1,N/(N-1)}(\mathbb{R}^N)$.
However using $\{\dot{W}^{2m-1,N/(N-1)}(\mathbb{R}^N)\}^*\! = \dot{W}^{1-2m,N}(\mathbb{R}^N)$ and Remark~\ref{remvmo} we find
for $N \ge 2$ that the second mode of convergence follows as a consequence of the first.

Given our assumption $N \ge 2$ we get that the map $\mathcal{T}$ is weakly continuous in $\dot{W}^{2m-1,N/(N-1)}(\mathbb{R}^N)$
(and thus it is weakly$-*$ continuous), so by Theorem~\ref{fpt} it has a fixed point.
The regularity follows from Proposition~\ref{exun}
and a classical bootstrap argument.
\end{proof}

\begin{remark}\label{remeqcla}
Note that the space $\dot{W}^{2m-1,N/(N-1)}(\mathbb{R}^N)$ is not a Banach space since $\|\cdot\|_{\dot{W}^{2m-1,N/(N-1)}(\mathbb{R}^N)}$ is a seminorm rather than a norm.
Note however that the null subspace of $\|\cdot\|_{\dot{W}^{2m-1,N/(N-1)}(\mathbb{R}^N)}$ is composed by the polynomials of degree smaller or equal to $2m-2$.
So we can consider $\dot{W}^{2m-1,N/(N-1)}(\mathbb{R}^N)$ as the quotient space which equivalence classes are closed with respect to the addition of one such
polynomial. Since in Theorem~\ref{exl1} we are proving the existence of solutions that vanish at infinity, and the set of polynomials that vanish at infinity
has a unique element that is identically zero, the use of norm $\|\cdot\|_{\dot{W}^{2m-1,N/(N-1)}(\mathbb{R}^N)}$ in the proof of Theorem~\ref{exl1} is meaningful.
\end{remark}

\subsection{$L^p$ data}

We now state the complementary result that assumes our datum $f \in L^p(\mathbb{R}^N)$.

\begin{theorem}\label{exlp}
Let $m=1 + N (k-1)/(2pk) \in \mathbb{N}$.
Then problem~\eqref{bvphessian}-\eqref{bcons} has at least one weak solution in
$\dot{W}^{2m-\epsilon,N p/(N-\epsilon p)}(\mathbb{R}^N) \, \forall \, 0 \le \epsilon \le 2m$
for any $N \ge 9$ and any $N/2 > k \ge 2$ ($N, k \in \mathbb{N}$)
provided $|\lambda|$ is small enough and $f \in L^{p}(\mathbb{R}^N)$, $1<p<N/(2k)$.
\end{theorem}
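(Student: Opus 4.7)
My plan is to mimic the architecture of Theorem~\ref{exl1}, replacing the non-reflexive iteration space $\dot{W}^{2m-1,N/(N-1)}(\mathbb{R}^N)$ by the top-order space $X:=\dot{W}^{2m,p}(\mathbb{R}^N)$, which for $1<p<\infty$ is reflexive (modulo the polynomial kernel handled as in Remark~\ref{remeqcla}) and hence a dual Banach space with separable predual in the sense required by Theorem~\ref{fpt}. The arithmetic constraint $m=1+N(k-1)/(2pk)$ is precisely the one making the critical Sobolev embedding $\dot{W}^{2m,p}(\mathbb{R}^N)\hookrightarrow \dot{W}^{2,pk}(\mathbb{R}^N)$ scale-invariant (note $(2m-2)p/N=(k-1)/k<1$ since $k\ge 2$). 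In consequence, for any $w\in X$ one has $D^2 w\in L^{pk}(\mathbb{R}^N)$ and thus $S_k[-w]\in L^p(\mathbb{R}^N)$ by H\"older's inequality. Combining with $f\in L^p(\mathbb{R}^N)$ and Proposition~\ref{exun}(a), the map
\begin{equation*}
\mathcal{T}\colon X\longrightarrow X, \qquad \mathcal{T}(w):=(-\Delta)^{-m}\bigl(S_k[-w]+\lambda f\bigr),
\end{equation*}
is well defined.

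The self-mapping step is standard. Setting $u_0:=\lambda(-\Delta)^{-m}f$, Proposition~\ref{exun}(a) yields $\|u_0\|_X\ll|\lambda|\,\|f\|_p$, and
\begin{equation*}
\|\mathcal{T}(w)-u_0\|_X \ll \|S_k[-w]\|_p \ll \|D^2 w\|_{pk}^{k} \ll \|w\|_X^{k} \ll \bigl(\|w-u_0\|_X+|\lambda|\,\|f\|_p\bigr)^{k}.
\end{equation*}
Therefore the closed ball $B=\{w\in X : \|w-u_0\|_X\le R\}$ is invariant under $\mathcal{T}$ as soon as $R$ and $|\lambda|$ are small enough; this set is convex and weakly$-*$ sequentially compact by reflexivity and Banach--Alaoglu.

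The core of the argument is the weak$-*$ sequential continuity of $\mathcal{T}$ on $B$. Since $(-\Delta)^{-m}\colon L^p(\mathbb{R}^N)\to X$ is bounded (by Proposition~\ref{exun}(a)) and weak-to-weak continuous as a bounded linear operator, it suffices to show that if $w_n\rightharpoonup w$ weakly in $X$ then $S_k[-w_n]\rightharpoonup S_k[-w]$ weakly in $L^p(\mathbb{R}^N)$. I would adapt the proof of Proposition~\ref{wcont}: the divergence form of the $k-$Hessian and the Rellich--Kondrachov theorem applied to the strictly subcritical exponents produced by $\dot{W}^{2m,p}(\mathbb{R}^N)\hookrightarrow\!\!\hookrightarrow \dot{W}^{2,q}_{\mathrm{loc}}(\mathbb{R}^N)$ for $q<pk$ (and one derivative less for the first-order factor) yield
\begin{equation*}
\int_{\mathbb{R}^N}\phi\, S_k[-w_n]\,dx = -\frac{1}{k}\sum_{i,j}\int_{\mathbb{R}^N}\phi_i\,(-w_n)_j\,S_k^{ij}[-w_n]\,dx \longrightarrow \int_{\mathbb{R}^N}\phi\, S_k[-w]\,dx
\end{equation*}
for every $\phi\in C_c^\infty(\mathbb{R}^N)$. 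Combined with the uniform bound $\sup_n\|S_k[-w_n]\|_p<\infty$ coming from the self-mapping estimate and the density of $C_c^\infty$ in $L^{p'}(\mathbb{R}^N)$ (which uses in an essential way $1<p<\infty$), a standard $\varepsilon/3$ argument upgrades this distributional convergence to weak convergence in $L^p$. Theorem~\ref{fpt} then furnishes a fixed point $u\in B$ which, by construction, is a weak solution of~\eqref{bvphessian}--\eqref{bcons}; the full regularity $u\in\dot{W}^{2m-\epsilon,Np/(N-\epsilon p)}(\mathbb{R}^N)$ for every $0\le\epsilon\le 2m$ follows from Corollary~\ref{corexun}(a) applied to the right-hand side $S_k[-u]+\lambda f\in L^p(\mathbb{R}^N)$. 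The only genuinely delicate point is the weak continuity step, since one has to juggle the correct critical exponents so that Rellich--Kondrachov is applicable with strict room; the dimensional and exponent hypotheses $N\ge 9$, $2\le k<N/2$ and $1<p<N/(2k)$ are exactly what guarantees all auxiliary exponents are finite, positive, and yield compact inclusions.
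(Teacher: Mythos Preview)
Your proposal is correct and follows essentially the same route as the paper's own proof: iterate in $\dot{W}^{2m,p}(\mathbb{R}^N)$, use the scaling identity $m=1+N(k-1)/(2pk)$ to get $D^2 w\in L^{pk}$ and hence $S_k[-w]\in L^p$, verify the self-mapping of a small ball, and then prove weak sequential continuity of $S_k$ from $\dot{W}^{2m,p}$ to $L^p$ by first establishing distributional convergence via the divergence structure and Rellich--Kondrachov, and then upgrading to weak $L^p$ convergence via uniform boundedness and density of $C_c^\infty$ in $L^{p'}$. The only cosmetic difference is that the paper phrases the iteration as mimicking Theorem~\ref{exh1} (so nominally on the right-hand side in $L^p$) while you iterate directly on the solution space; the two formulations are equivalent through the isomorphism $(-\Delta)^{-m}\colon L^p\to\dot{W}^{2m,p}$.
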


\begin{proof}
The proofs mimics that of Theorem~\ref{exh1} with the space $\dot{W}^{2m,p}(\mathbb{R}^N)$
playing the role of both $\dot{W}^{2m-1,N/(N-1)}(\mathbb{R}^N)$ and $\dot{W}^{2m,1}(\mathbb{R}^N)$,
except for the proof of weak convergence. Therefore we will only include this part here.

In this case $\psi \in \dot{W}^{2m,p}(\mathbb{R}^N) \hookrightarrow \dot{W}^{2,k p}(\mathbb{R}^N)$, so we need to prove
$$
\int_{\mathbb{R}^N} \varphi \, S_k[\psi_n] \, dx \to \int_{\mathbb{R}^N} \varphi \, S_k[\psi] \, dx \quad \forall \, \varphi \in L^q(\mathbb{R}^N),
$$
where $p^{-1} + q^{-1} = 1$ (and so $q>1$).
We again start proving weak continuity in the sense of distributions
\begin{equation}\label{wcdist2}
\psi_n \rightharpoonup \psi \quad \text{weakly in} \, \dot{W}^{2m,p}(\mathbb{R}^N) \Rightarrow
S_k[\psi_n] \to S_k[\psi] \quad \text{in} \, \mathcal{D}^*(\mathbb{R}^N).
\end{equation}
We fix $\phi \in C^\infty_c(\mathbb{R}^N)$ and calculate
\begin{equation}\label{intparts2}
\int_{\mathbb{R}^N} \phi \, S_k[\psi_n] \, dx = - \frac{1}{k} \sum_{i,j} \int_{\mathbb{R}^N} \phi_i (\psi_n)_j S^{ij}_k[\psi_n] \, dx,
\end{equation}
where we have used integration by parts and the divergence form of the $k-$Hessian.
Now we take the limit
\begin{eqnarray}\nonumber
\lim_{n \to \infty} \int_{\mathbb{R}^N} \phi \, S_k[\psi_n] \, dx &=& - \lim_{n \to \infty} \frac{1}{k} \sum_{i,j}
\int_{\mathbb{R}^N} \phi_i (\psi_n)_j S^{ij}_k[\psi_n] \, dx, \\ \nonumber
&=& - \frac{1}{k} \sum_{i,j}
\int_{\mathbb{R}^N} \phi_i (\psi)_j S^{ij}_k[\psi] \, dx, \\ \nonumber
&=& \int_{\mathbb{R}^N} \phi \, S_k[\psi] \, dx,
\end{eqnarray}
where the first and third equalities follow from~\eqref{intparts2} and the second from the Rellich-Kondrachov theorem which for
$$
\psi_n \rightharpoonup \psi \; \text{weakly in} \; \dot{W}^{2m,p}(\mathbb{R}^N)
$$
implies
$$
\psi_n \rightarrow \psi \; \text{strongly in} \; \dot{W}^{2,(2N-pk)pk(k-1)/[(2N-pk)pk-2(N-pk)])}_{\text{loc}}(\mathbb{R}^N)
$$
and
$$
\psi_n \rightarrow \psi \; \text{strongly in} \; \dot{W}^{1,(2N-pk)pk/(2N-2pk)}_{\text{loc}}(\mathbb{R}^N).
$$
Thus~\eqref{wcdist2} is proven.

Since $C^\infty_c(\mathbb{R}^N)$ is dense in $L^q(\mathbb{R}^N)$, $p^{-1}+q^{-1}=1$, we select
an approximating family $\varphi_\epsilon \in C^\infty_c(\mathbb{R}^N)$ of $\varphi \in L^q(\mathbb{R}^N)$ such that
$\| \varphi-\varphi_\epsilon \|_{L^q(\mathbb{R}^N)} \le \epsilon$ for any $\epsilon >0$.
So it holds that
\begin{eqnarray}\nonumber
\int_{\mathbb{R}^N} \varphi \, S_k[\psi_n] \, dx - \int_{\mathbb{R}^N} \varphi \, S_k[\psi] \, dx &=& \int_{\mathbb{R}^N} \varphi_\epsilon \, S_k[\psi_n] \, dx - \int_{\mathbb{R}^N} \varphi_\epsilon \, S_k[\psi] \, dx \\ \nonumber
& & + \int_{\mathbb{R}^N} (\varphi-\varphi_\epsilon) \, S_k[\psi_n] \, dx \\ \nonumber & & - \int_{\mathbb{R}^N} (\varphi-\varphi_\epsilon) \, S_k[\psi] \, dx.
\end{eqnarray}
Given that $S_k[\psi_n]$ and $S_k[\psi]$ are bounded in $L^p(\mathbb{R}^N)$, we can establish the estimate
\begin{eqnarray}\nonumber
& & \left| \int_{\mathbb{R}^N} \varphi \, S_k[\psi_n] \, dx - \int_{\mathbb{R}^N} \varphi \, S_k[\psi] \, dx \right|
\\ \nonumber &\le&
\left\{ \| S_k[\psi_n] \|_{L^p(\mathbb{R}^N)} + \| S_k[\psi] \|_{L^p(\mathbb{R}^N)} \right\} \\ \nonumber
& & \times \| \varphi-\varphi_\epsilon \|_{L^q(\mathbb{R}^N)} \\ \nonumber
& & + \left| \int_{\mathbb{R}^N} \varphi_\epsilon \, S_k[\psi_n] \, dx - \int_{\mathbb{R}^N} \varphi_\epsilon \, S_k[\psi] \, dx \right|,
\end{eqnarray}
and
\begin{equation}\nonumber
\limsup_{n \to \infty} \left| \int_{\mathbb{R}^N} \varphi \, S_k[\psi_n] \, dx - \int_{\mathbb{R}^N} \varphi \, S_k[\psi] \, dx \right| \le C \epsilon + o(1).
\end{equation}
Therefore the arbitrariness of $\epsilon$ guarantees that, if
$$
\psi_n \rightharpoonup \psi; \qquad \text{weakly in} \, \dot{W}^{2m,p}(\mathbb{R}^N),
$$
then
$$
S_k[\psi_n] \rightharpoonup S_k[\psi]; \qquad \text{weakly in} \, L^p(\mathbb{R}^N),
$$
and so the statement follows.
\end{proof}

\begin{remark}
The lower bounds for the values of $N$ in Theorems~\ref{exh1},~\ref{exl1} and~\ref{exlp} are easily proven using the inequalities
in the statement of Proposition~\ref{exun}. Also, it is easy to find examples of $m$, $N$, $k$ and $p$ for which the statements of these
theorems apply.
\end{remark}

\section{Local Uniqueness}
\label{locuniq}

In this section we prove existence and local uniqueness of a solution under more restrictive conditions.
We start concentrating on the case that corresponds to Theorem~\ref{exl1}.

\begin{definition}
Let $u$ be a weak solution to problem~\eqref{bvphessian}-\eqref{bcons} and $\mathcal{W}$ a Banach space.
If there exists a $\rho > 0$ such that this solution is unique in the ball
$$
\tilde{B}_\rho(u)=\left\{ v \in \mathcal{W} : \| u - v \|_{\mathcal{W}} \le \rho \right\},
$$
then we say that $u$ is \emph{locally unique in $\mathcal{W}$}.
\end{definition}

\begin{theorem}\label{exul1}
Let $m=1 + N (k-1)/(2k) \in \mathbb{N}$.
Then problem~\eqref{bvphessian}-\eqref{bcons} has at least one weak solution in
$\dot{W}^{2m-\epsilon,N/(N-\epsilon)}(\mathbb{R}^N) \, \forall \, 0 < \epsilon \le 2m$
for any $N \ge 8$ and any $N/2 > k \ge 2$ ($N, k \in \mathbb{N}$)
provided $|\lambda|$ is small enough and $f \in L^{1}(\mathbb{R}^N)$.
Moreover any such solution fulfills $D^{2m} u \in L^{1,\infty}(\mathbb{R}^N)$
and at least one is locally unique in $\dot{W}^{2m-1,N/(N-1)}(\mathbb{R}^N)$.
\end{theorem}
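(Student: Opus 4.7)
The existence statement is identical to that of Theorem~\ref{exl1}, so the only genuinely new ingredient is the local uniqueness. The plan is to replace the Schauder--Tychonoff argument of Theorem~\ref{exl1} by an honest Banach contraction argument on the same ball $B=\{w\in\dot{W}^{2m-1,N/(N-1)}(\mathbb{R}^N):\|w-u_{0}\|_{\dot{W}^{2m-1,N/(N-1)}(\mathbb{R}^N)}\le R\}$, with $u_{0}=\lambda(-\Delta)^{-m}f$. This yields, for $R$ and $|\lambda|$ sufficiently small, both a fixed point of the map
$$
\mathcal{T}(w)=(-\Delta)^{-m}S_{k}[-w]+\lambda(-\Delta)^{-m}f
$$
and its uniqueness inside $B$, which is exactly local uniqueness in $\dot{W}^{2m-1,N/(N-1)}(\mathbb{R}^N)$.

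The core technical step is a Lipschitz-type bound of the shape
$$
\|\mathcal{T}(w_{1})-\mathcal{T}(w_{2})\|_{\dot{W}^{2m-1,N/(N-1)}(\mathbb{R}^N)}\ll\bigl(\|w_{1}\|+\|w_{2}\|\bigr)^{k-1}\,\|w_{1}-w_{2}\|_{\dot{W}^{2m-1,N/(N-1)}(\mathbb{R}^N)}.
$$
By case (b) of Proposition~\ref{exun} this reduces to showing
$$
\|S_{k}[-w_{1}]-S_{k}[-w_{2}]\|_{L^{1}(\mathbb{R}^N)}\ll\bigl(\|w_{1}\|+\|w_{2}\|\bigr)^{k-1}\,\|w_{1}-w_{2}\|_{\dot{W}^{2m-1,N/(N-1)}(\mathbb{R}^N)}.
$$
I would obtain this from the multilinear structure of $S_{k}$: setting $w_{t}=tw_{1}+(1-t)w_{2}$ and using the well-known identity $\sum_{j}\partial_{x_{j}}S_{k}^{ij}[\psi]\equiv 0$ (the Newton divergence-free property of the $k$-Hessian cofactor matrix), one has the pointwise representation
$$
S_{k}[-w_{1}]-S_{k}[-w_{2}]=-\int_{0}^{1}\!\!\sum_{i,j}\partial_{x_{j}}\!\Bigl(S_{k}^{ij}[-w_{t}]\,(w_{1}-w_{2})_{x_{i}}\Bigr)\,dt.
$$
The bracketed quantity is a product of a divergence-free field and a gradient, so the same compensated-compactness machinery invoked in Lemma~\ref{mvalues} (Theorem~I of \cite{grafakos1}) bounds its $\mathcal{H}^{1}(\mathbb{R}^N)\hookrightarrow L^{1}(\mathbb{R}^N)$ norm by a product of an $L^{r}$ norm of $S_{k}^{ij}[-w_{t}]$ and an $L^{r'}$ norm of $\nabla(w_{1}-w_{2})$, with exponents dictated by the Sobolev embedding from $\dot{W}^{2m-1,N/(N-1)}(\mathbb{R}^N)$. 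Since $S_{k}^{ij}$ is a homogeneous polynomial of degree $k-1$ in the second derivatives, H\"older yields the $(k-1)$-th power factor $(\|w_{1}\|+\|w_{2}\|)^{k-1}$ as required.

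With the Lipschitz estimate in hand, the rest is routine: inside the ball $B$ both $\|w_{1}\|$ and $\|w_{2}\|$ are bounded by $R+|\lambda|\|(-\Delta)^{-m}f\|_{\dot{W}^{2m-1,N/(N-1)}(\mathbb{R}^N)}$, and by choosing $R$ and $|\lambda|$ small enough the Lipschitz constant of $\mathcal{T}$ on $B$ is strictly less than $1$; Banach's contraction principle then supplies a unique fixed point in $B$, which by the argument of Theorem~\ref{exl1} inherits the asserted regularity $u\in\dot{W}^{2m-\epsilon,N/(N-\epsilon)}(\mathbb{R}^N)$ for all $0<\epsilon\le 2m$ and $D^{2m}u\in L^{1,\infty}(\mathbb{R}^N)$ from Proposition~\ref{exun}. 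I expect the main obstacle to be the careful bookkeeping of Sobolev exponents in the H\"older step that produces the Lipschitz estimate, in particular verifying that for $m=1+N(k-1)/(2k)\in\mathbb{N}$, $N\ge 8$ and $2\le k<N/2$ the exponents balance so that the divergence-free compensated-compactness bound lands inside $L^{1}(\mathbb{R}^N)$; once those indices are checked, everything else is standard.
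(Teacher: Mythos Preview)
Your proposal is correct and follows essentially the same architecture as the paper: both replace the Schauder--Tychonoff argument by a Banach contraction on the ball $B\subset\dot{W}^{2m-1,N/(N-1)}(\mathbb{R}^N)$, obtaining the key Lipschitz estimate $\|S_k[-w_1]-S_k[-w_2]\|_{L^1}\ll(\|w_1\|+\|w_2\|)^{k-1}\|w_1-w_2\|$ from the multilinear structure of $S_k$ and the divergence-free property $\sum_i\partial_{x_i}S_k^{ij}=0$. The only minor difference is in how that estimate is produced: you use the interpolation $w_t=tw_1+(1-t)w_2$ together with the fundamental theorem of calculus to write the difference as an integral of divergences and then invoke the div--curl/compensated-compactness bound of Lemma~\ref{mvalues}, whereas the paper uses the non-divergence identity $S_k[\psi]=\tfrac{1}{k}\sum_{i,j}\psi_{x_ix_j}S_k^{ij}[\psi]$ directly, telescopes, and appeals to the argument of Brezis--Nguyen~\cite{brezis3}; both routes land in $L^1$ and yield the same contraction constant, so the remainder of the proof is identical.
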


\begin{proof}
Consider $w_1, w_2 \in \dot{W}^{2m-1,N/(N-1)}(\mathbb{R}^N)$. Then $S_k[-w_{1,2}] \in \mathcal{H}^1(\mathbb{R}^N)$ by Lemma~\ref{mvalues}
and the equations
\begin{eqnarray}\nonumber
\left( -\Delta \right)^m u_{1,2} &=& S_k[-w_{1,2}] + \lambda f, \qquad x \in \mathbb{R}^N, \\ \nonumber
u_{1,2} &\to& 0, \quad \text{when} \quad |x| \to \infty,
\end{eqnarray}
have a unique solution $u_{1,2} \in \dot{W}^{2m-1,N/(N-1)}(\mathbb{R}^N)$ by Proposition~\ref{exun}.
Now we can subtract them
\begin{eqnarray}\nonumber
\left( -\Delta \right)^m (u_{1} - u_2) &=& S_k[-w_{1}] - S_k[-w_{2}] , \qquad x \in \mathbb{R}^N, \\ \nonumber
u_{1} - u_2 &\to& 0, \quad \text{when} \quad |x| \to \infty,
\end{eqnarray}
and find a unique solution $u_1 - u_2 \in \dot{W}^{2m-1,N/(N-1)}(\mathbb{R}^N)$ such that
$$
\| u_1 - u_2 \|_{\dot{W}^{2m-1,N/(N-1)}(\mathbb{R}^N)} \ll \| S_k[-w_{1}] - S_k[-w_{2}] \|_1,
$$
by the same proposition.
Now using
$$
S_k[\psi]=\frac{1}{k}\sum_{i,j} \partial_{x_i}(\psi_{x_j} S_k^{ij}[\psi])= \frac{1}{k}\sum_{i,j} \psi_{x_i x_j} S_k^{ij}[\psi],
$$
since
$$
\sum_{i} \partial_{x_i} S_k^{ij}[\psi]=0 \quad \forall \,\, 1 \le j \le N,
$$
for any smooth function $\psi$~\cite{morrey}, yields
\begin{eqnarray}\nonumber
& & \| u_1 - u_2 \|_{\dot{W}^{2m-1,N/(N-1)}(\mathbb{R}^N)} \\ \nonumber
&\ll& \left\| \frac{1}{k}\sum_{i,j} (w_1)_{x_i x_j} S_k^{ij}[w_1]
- \frac{1}{k}\sum_{i,j} (w_2)_{x_i x_j} S_k^{ij}[w_2] \right\|_1 \\ \nonumber
&\ll& \left[ \| w_1 \|_{\dot{W}^{2m-1,N/(N-1)}(\mathbb{R}^N)} + \| w_2 \|_{\dot{W}^{2m-1,N/(N-1)}(\mathbb{R}^N)} \right]^{k-1}
\\ \nonumber & & \times \| w_1 - w_2 \|_{\dot{W}^{2m-1,N/(N-1)}(\mathbb{R}^N)},
\end{eqnarray}
after arguing by approximation in the first step and using Sobolev and triangle inequalities, and a reasoning akin to
that in the proof of Theorem~1 in~\cite{brezis3}, in the second.
We know the map
\begin{eqnarray}\nonumber
\mathcal{T}: \dot{W}^{2m-1,N/(N-1)}(\mathbb{R}^N) &\longrightarrow&  \dot{W}^{2m-1,N/(N-1)}(\mathbb{R}^N) \\ \nonumber
w_{1,2} &\longmapsto& u_{1,2},
\end{eqnarray}
is well defined and also maps the ball
$$
B = \left\{ w \in \dot{W}^{2m-1,N/(N-1)}(\mathbb{R}^N) : \|w - u_0\|_{\dot{W}^{2m-1,N/(N-1)}(\mathbb{R}^N)} \le R \right\}
$$
into itself provided we choose $R$ and $|\lambda|$ small enough by the proof of Theorem~\ref{exh1}.
Therefore
\begin{eqnarray}\nonumber
\| u_1 - u_2 \|_{\dot{W}^{2m-1,N/(N-1)}(\mathbb{R}^N)}
&\ll& \left[ |\lambda| \|f\|_{L^1(\mathbb{R}^N)} + R \right]^{k-1} \\ \nonumber
& & \times \| w_1 - w_2 \|_{\dot{W}^{2m-1,N/(N-1)}(\mathbb{R}^N)}
\\ \nonumber
&<& \frac12 \, \| w_1 - w_2 \|_{\dot{W}^{2m-1,N/(N-1)}(\mathbb{R}^N)},
\end{eqnarray}
where we have used the triangle inequality and Proposition~\ref{exun} in the first step and
have chosen sufficiently smaller $R$ and $|\lambda|$ in the second. Thus the existence and uniqueness of the solution
follows by the application of the Banach fixed point theorem and the regularity by Proposition~\ref{exun} and a classical bootstrap argument.
\end{proof}

We can now state the corresponding result for $f \in L^p(\mathbb{R}^N)$.

\begin{theorem}\label{exulp}
Let $m=1 + N (k-1)/(2pk) \in \mathbb{N}$.
Then problem~\eqref{bvphessian}-\eqref{bcons} has at least one weak solution in
$\dot{W}^{2m-\epsilon,N p/(N-\epsilon p)}(\mathbb{R}^N) \, \forall \, 0 \le \epsilon \le 2m$
for any $N \ge 9$ and any $N/2 > k \ge 2$ ($N, k \in \mathbb{N}$)
provided $|\lambda|$ is small enough and $f \in L^{p}(\mathbb{R}^N)$, $1<p<N/(2k)$.
Moreover, at least one of these solutions is locally unique in $\dot{W}^{2m, p}(\mathbb{R}^N)$.
\end{theorem}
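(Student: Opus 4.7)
The plan is to follow the template of the proof of Theorem~\ref{exul1}, replacing the space $\dot{W}^{2m-1,N/(N-1)}(\mathbb{R}^N)$ throughout by $\dot{W}^{2m,p}(\mathbb{R}^N)$, which is the natural ambient space in the $L^p$ setting since Proposition~\ref{exun} gives $u \in \dot{W}^{2m,p}(\mathbb{R}^N)$ when $f \in L^p(\mathbb{R}^N)$. Existence follows directly from Theorem~\ref{exlp}, so the real content is the local uniqueness, which I would obtain via the Banach fixed point theorem on a suitable small ball instead of the Schauder--Tychonoff type argument of Theorem~\ref{fpt}.

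Concretely, for $w_1, w_2 \in \dot{W}^{2m,p}(\mathbb{R}^N)$, set $u_{1,2}$ to be the unique solutions given by Proposition~\ref{exun} of
\begin{equation}\nonumber
(-\Delta)^m u_{1,2} = S_k[-w_{1,2}] + \lambda f, \qquad u_{1,2}\to 0 \text{ as } |x|\to\infty,
\end{equation}
so that $u_1 - u_2$ solves the linear problem with right hand side $S_k[-w_1]-S_k[-w_2]$. Using the identity
\begin{equation}\nonumber
S_k[\psi] = \frac{1}{k}\sum_{i,j} \psi_{x_i x_j} S_k^{ij}[\psi],
\end{equation}
valid for sufficiently smooth $\psi$ by the divergence-free property $\sum_i \partial_{x_i} S_k^{ij}[\psi]=0$, and arguing by density as in the proof of Theorem~\ref{exul1}, one obtains a telescoping expansion of $S_k[-w_1]-S_k[-w_2]$ into a sum of terms each of which is quadratic in second derivatives of $w_1$ and $w_2$, multiplied by factors from $S_k^{ij}$. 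Applying H\"older and Sobolev embeddings with the exponent $p$ (and the embedding $\dot{W}^{2m,p}\hookrightarrow \dot{W}^{2,kp}$ that already appears in the proof of Theorem~\ref{exlp}) will yield an estimate of the shape
\begin{equation}\nonumber
\|u_1-u_2\|_{\dot{W}^{2m,p}(\mathbb{R}^N)} \ll \bigl[\|w_1\|_{\dot{W}^{2m,p}(\mathbb{R}^N)}+\|w_2\|_{\dot{W}^{2m,p}(\mathbb{R}^N)}\bigr]^{k-1}\|w_1-w_2\|_{\dot{W}^{2m,p}(\mathbb{R}^N)}.
\end{equation}

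From here, let $u_0 = \lambda(-\Delta)^{-m} f$ and let $B = \{w : \|w-u_0\|_{\dot{W}^{2m,p}}\le R\}$; the proof of Theorem~\ref{exlp} already shows that the solution map $\mathcal{T}: w\mapsto u$ sends $B$ into $B$ for suitably small $R$ and $|\lambda|$. Combining this with the estimate above and the bound $\|u_0\|_{\dot{W}^{2m,p}}\ll |\lambda|\,\|f\|_{L^p(\mathbb{R}^N)}$ from Proposition~\ref{exun}, one gets
\begin{equation}\nonumber
\|u_1-u_2\|_{\dot{W}^{2m,p}(\mathbb{R}^N)} \ll \bigl[|\lambda|\,\|f\|_{L^p(\mathbb{R}^N)}+R\bigr]^{k-1}\|w_1-w_2\|_{\dot{W}^{2m,p}(\mathbb{R}^N)} < \tfrac12\|w_1-w_2\|_{\dot{W}^{2m,p}(\mathbb{R}^N)},
\end{equation}
upon shrinking $R$ and $|\lambda|$ further. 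Then the Banach fixed point theorem applied to $\mathcal{T}$ restricted to $B$ produces a unique fixed point inside $B$, which is exactly the local uniqueness claim, while the regularity statement follows from Proposition~\ref{exun} together with a bootstrap argument as before.

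The step I expect to require the most care is the quadratic-type nonlinear estimate on $S_k[-w_1]-S_k[-w_2]$: one needs a clean multilinear decomposition (as in Brezis--Nguyen and the references cited around~\cite{brezis3}) so that every factor can be placed in an $L^q$ space dictated by $\dot{W}^{2m,p}\hookrightarrow \dot{W}^{2,kp}$ and the Sobolev exponents derived from it, and so that the resulting product lands in $L^p(\mathbb{R}^N)$ allowing Proposition~\ref{exun} to close the estimate in $\dot{W}^{2m,p}$. Once this bookkeeping is done the rest is a direct contraction argument.
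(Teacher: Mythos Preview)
Your proposal is correct and is exactly the approach the paper takes: the paper's proof of Theorem~\ref{exulp} is the single line ``Follows analogously to that of Theorem~\ref{exul1},'' and your write-up is a faithful unpacking of that analogy, with $\dot{W}^{2m,p}(\mathbb{R}^N)$ in place of $\dot{W}^{2m-1,N/(N-1)}(\mathbb{R}^N)$ and the embedding $\dot{W}^{2m,p}\hookrightarrow\dot{W}^{2,kp}$ driving the multilinear estimate. The one point you flag as requiring care---the telescoping decomposition of $S_k[-w_1]-S_k[-w_2]$ and the H\"older/Sobolev bookkeeping---is indeed the only nontrivial step, and it goes through exactly as in Theorem~\ref{exul1}.
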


\begin{proof}
Follows analogously to that of Theorem~\ref{exul1}.
\end{proof}

\begin{remark}
The proof of Theorem~\ref{exul1} is not applicable to the case $f \in \mathcal{H}^1(\mathbb{R}^N)$ and $k=N/2$;
for the existence theory under these hypotheses the reader is referred to Theorem~\ref{exh1}. On the other hand assuming
$f \in \mathcal{H}^1(\mathbb{R}^N)$ and $k < N/2$ allows one to reproduce this proof identically with the slight improvement
in regularity $D^{2m} u \in \mathcal{H}^{1}(\mathbb{R}^N)$.
\end{remark}

\section{Nonlocal problems}
\label{nproblems}

In this section we extend our results for problem~\eqref{rkhessian} to
\begin{equation}\label{rkhessiannl}
\Lambda^n u = S_k[-u] + \lambda f, \qquad x \in \mathbb{R}^N,
\end{equation}
where $\Lambda$ is a pseudo-differential operator defined in the following way.

\begin{definition}\label{lambda}
The pseudo-differential operator $\Lambda := \sqrt{-\Delta}$, where the square root is interpreted in the sense of
the Spectral Theorem.
\end{definition}

\begin{remark}
The operator $\Lambda$ is well defined since $-\Delta$ is essentially self-adjoint in $C_c^\infty(\mathbb{R}^N) \subset L^2(\mathbb{R}^N)$~\cite{rs}.
\end{remark}

\begin{remark}
The operator $\Lambda^n$ is a differential, and thus local, operator when $n$ is even; in this case we actually have $\Lambda^n=(-\Delta)^{n/2}$.
If $n$ is odd then $\Lambda^n$ is a nonlocal pseudo-differential operator.
\end{remark}

\begin{proposition}
$\Lambda f = \mathcal{F}^{-1}[2 \pi |\eta| \mathcal{F}(f)]$.
\end{proposition}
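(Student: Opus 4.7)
The plan is to show that under the Fourier transform the operator $-\Delta$ is unitarily equivalent to multiplication by a nonnegative function, and then invoke the Borel functional calculus (a consequence of the Spectral Theorem) to take the square root and transfer it back. The target identity $\Lambda f = \mathcal{F}^{-1}[2\pi|\eta|\,\mathcal{F}(f)]$ is consistent with the Fourier convention $\mathcal{F}(f)(\eta)=\int_{\mathbb{R}^N} f(x)\,e^{-2\pi i x\cdot\eta}\,dx$, under which $\mathcal{F}(-\Delta\varphi)(\eta)= 4\pi^{2}|\eta|^{2}\,\mathcal{F}(\varphi)(\eta)$ for $\varphi\in C_c^{\infty}(\mathbb{R}^N)$.

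First I would record that the Fourier transform $\mathcal{F}$ is a unitary operator on $L^{2}(\mathbb{R}^N)$ and that a direct computation on Schwartz functions gives
\begin{equation}\nonumber
\mathcal{F}\circ(-\Delta)\circ\mathcal{F}^{-1} \;=\; M_{4\pi^{2}|\eta|^{2}},
\end{equation}
where $M_{g}$ denotes the (in general unbounded) multiplication operator by $g$. Since $-\Delta$ is essentially self-adjoint on $C_c^{\infty}(\mathbb{R}^N)$ (invoked in the remark just before the statement) and the multiplication operator $M_{4\pi^{2}|\eta|^{2}}$ is self-adjoint on its natural domain (the set of $g\in L^{2}$ with $|\eta|^{2}g\in L^{2}$), this identity extends by unitary equivalence to the unique self-adjoint closure of $-\Delta$. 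In other words, $-\Delta$ and $M_{4\pi^{2}|\eta|^{2}}$ are unitarily equivalent nonnegative self-adjoint operators via $\mathcal{F}$.

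Next I would apply the Borel functional calculus furnished by the Spectral Theorem: for any nonnegative Borel function $\phi$ on $[0,\infty)$ and any nonnegative self-adjoint operator $A$, $\phi(UAU^{-1})=U\phi(A)U^{-1}$ whenever $U$ is unitary; moreover the functional calculus commutes with the obvious operations on multiplication operators, namely $\phi(M_{g})=M_{\phi\circ g}$ for nonnegative $g$. Applying this with $A=-\Delta$, $U=\mathcal{F}$, $g(\eta)=4\pi^{2}|\eta|^{2}$, and $\phi(s)=\sqrt{s}$, and recalling that by Definition~\ref{lambda} we defined $\Lambda=\sqrt{-\Delta}$ through the Spectral Theorem, we obtain
\begin{equation}\nonumber
\Lambda \;=\; \sqrt{-\Delta} \;=\; \mathcal{F}^{-1}\circ\sqrt{M_{4\pi^{2}|\eta|^{2}}}\circ\mathcal{F} \;=\; \mathcal{F}^{-1}\circ M_{2\pi|\eta|}\circ\mathcal{F},
\end{equation}
which is precisely the stated identity when applied to $f$ in the domain.

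The main (and really only) obstacle is a bookkeeping one: one must make sure that the unitary equivalence is stated at the level of the self-adjoint closures rather than on the initial dense domain $C_c^{\infty}(\mathbb{R}^N)$, so that the Borel functional calculus applies unambiguously. Once the unitary equivalence of closures is in hand, the rest is an immediate consequence of the spectral calculus, with no further analytic subtlety.
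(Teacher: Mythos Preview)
Your proof is correct and follows exactly the same approach as the paper's own argument: the paper simply records the spectral representation $-\Delta f = \mathcal{F}^{-1}[4\pi^{2}|\eta|^{2}\,\mathcal{F}(f)]$ and declares the result an immediate consequence, while you spell out the underlying functional-calculus step (unitary equivalence via $\mathcal{F}$, then $\phi(UAU^{-1})=U\phi(A)U^{-1}$ with $\phi(s)=\sqrt{s}$). The extra care you take with domains and closures is not in the paper but is the natural expansion of its one-line proof.
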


\begin{proof}
This is an immediate consequence of the spectral representation of the Laplacian in terms of the Fourier transform:
$$
-\Delta f = \mathcal{F}^{-1}[4 \pi^2 |\eta|^2 \mathcal{F}(f)].
$$
\end{proof}

\begin{definition}\label{funsol2}
We define $G_{n,N} \in \mathcal{S}^*(\mathbb{R}^N)$ in the following way:
\begin{itemize}
\item If $0 < n < N$, it is the unique solution to $\Lambda^n G_{n,N}=\delta_0$ that obeys $G_{n,N}(x) \to 0$ when $|x| \to \infty$.
\item If $n=N$, it is the unique solution to $\Lambda^n G_{n,N}=\delta_0$ in $\text{BMO}(\mathbb{R}^N)$.
\end{itemize}
\end{definition}

\begin{proposition}
The distribution $G_{n,N}$ is given by the exact formulas:
\begin{itemize}
\item If $0 < n < N$,
$$G_{n,N}(x)=\frac{C_{n,N}}{|x|^{N-n}}, \quad C_{n,N}= 2^{-n} \pi^{-N/2}
\frac{\Gamma\left(\frac{N-n}{2}\right)}{\Gamma\left(\frac{n}{2}\right)}.$$
\item If $n = N$,
$$G_{N,N}(x) \equiv G_{N}(x)= C_N \log|x|,$$
\vspace{-0.15cm}
$$C_{N}= \left\{ \begin{array}{cc}
(2-N) (2 \pi)^{N-2} \pi^{2-N/2} \Gamma\left(\frac{N}{2}-1\right), & N \ge 3 \\
-(2\pi)^{-1}, & N=2 \\
\pi^{-1}, & N=1
\end{array}. \right.$$
\end{itemize}
\end{proposition}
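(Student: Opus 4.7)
The plan is to reduce everything to Fourier space, where by the previous proposition $\Lambda$ acts as the multiplier $2\pi|\eta|$. Writing the defining condition $\Lambda^n G_{n,N}=\delta_0$ on the Fourier side gives $(2\pi|\eta|)^n\widehat{G}_{n,N}(\eta)=1$, so formally $\widehat{G}_{n,N}(\eta)=(2\pi|\eta|)^{-n}$. The task splits into: (i) interpreting this multiplier as a tempered distribution in each range of $n$, (ii) inverting the Fourier transform to produce an explicit expression, and (iii) checking uniqueness of the chosen distinguished representative under either the decay or the $\mathrm{BMO}$ normalization in Definition~\ref{funsol2}.

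For $0<n<N$ the function $|\eta|^{-n}$ is locally integrable near the origin and of polynomial growth at infinity, hence a well-defined tempered distribution. I would then invoke the classical Riesz-potential identity
\begin{equation*}
\mathcal{F}\bigl[|x|^{-s}\bigr](\eta)=\pi^{s-N/2}\,\frac{\Gamma\!\left(\frac{N-s}{2}\right)}{\Gamma\!\left(\frac{s}{2}\right)}\,|\eta|^{s-N},\qquad 0<s<N,
\end{equation*}
valid in the Fourier convention singled out by the preceding proposition, and set $s=N-n$. Inverting and collecting constants produces the claimed $G_{n,N}(x)=C_{n,N}|x|^{n-N}$ with $C_{n,N}=2^{-n}\pi^{-N/2}\Gamma((N-n)/2)/\Gamma(n/2)$. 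Uniqueness among distributions vanishing at infinity is a Liouville argument analogous to Lemma~\ref{liouville}: any $v\in\mathcal{S}^*(\mathbb{R}^N)$ with $\Lambda^n v=0$ has $\widehat{v}$ supported at $\{0\}$, hence $v$ is polynomial, and the decay condition forces $v\equiv 0$.

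For the critical case $n=N$ the multiplier $|\eta|^{-N}$ is not locally integrable near the origin, so it cannot be paired with test functions by naive duality. I would instead pass to the limit $n\nearrow N$ inside the explicit formula from the previous step using the Laurent expansions
\begin{equation*}
\Gamma\!\left(\tfrac{N-n}{2}\right)=\frac{2}{N-n}-\gamma+O(N-n),\qquad |x|^{n-N}=1+(n-N)\log|x|+O\!\bigl((n-N)^{2}\bigr).
\end{equation*}
The product produces a divergent constant plus a finite multiple of $\log|x|$; because fundamental solutions in $\mathrm{BMO}(\mathbb{R}^N)$ are only determined modulo additive constants, the divergent piece is harmless and the remaining coefficient reads off as $C_N$. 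The low-dimensional specializations $N=1,2$ follow by direct evaluation of the Gamma functions (with suitable sign-tracking via the functional equation, since the expansion degenerates when $\Gamma(N/2-1)$ itself has a pole). Uniqueness in $\mathrm{BMO}$ is then immediate from Lemma~\ref{liouville}, because any two candidates differ by an element of $\ker\Lambda^N$ that lies in $\mathrm{BMO}(\mathbb{R}^N)$, which by that Lemma (applied with $m=N/2$ when $N$ is even, and suitably adapted to the nonlocal case via the spectral support argument when $N$ is odd) must be a constant.

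The main obstacle is the $n=N$ step: justifying that the formal limit $\lim_{n\nearrow N}G_{n,N}$ (after constant subtraction) really solves $\Lambda^N G_{N,N}=\delta_0$ in the distributional sense, particularly when $N$ is odd and $\Lambda^N$ is a genuinely nonlocal pseudo-differential operator. The cleanest route is to test $(2\pi|\eta|)^n\widehat{G}_{n,N}=1$ against Schwartz functions $\varphi$ whose Fourier transforms vanish to sufficient order at the origin, where both sides converge as $n\nearrow N$ by dominated convergence, and then to remove this restriction by a density argument combined with the $\mathrm{BMO}$-$\mathcal{H}^1$ duality framework set up in Section~\ref{funfra}.
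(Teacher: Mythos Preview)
For the subcritical range $0<n<N$ your argument coincides with the paper's: both identify $\widehat{G}_{n,N}(\xi)=(2\pi|\xi|)^{-n}$ as a tempered distribution and invert via the Riesz-potential identity.

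For the critical case $n=N$ the routes diverge. You obtain $C_N\log|x|$ by analytic continuation, expanding $C_{n,N}|x|^{n-N}$ as $n\nearrow N$, discarding the divergent constant (harmless in $\mathrm{BMO}$), and reading off the coefficient of $\log|x|$. The paper instead verifies directly, case by case, that $\Lambda^N(C_N\log|x|)=\delta_0$: for $N=2$ this is the classical Laplacian Green's function; for $N\ge 3$ it writes $\Lambda^N\log|x|=\Lambda^{N-2}[(-\Delta)\log|x|]=\Lambda^{N-2}\bigl((2-N)|x|^{-2}\bigr)$ and then applies the already-established subcritical Fourier formula to $|x|^{-2}$; for $N=1$ it goes through the Hilbert transform, computing $\mathcal{F}(\log|x|)$ from $\mathcal{F}(\mathrm{P.V.}\,x^{-1})$ and then applying $\Lambda^{1/2}$ twice.

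Your approach is more unified and explains conceptually why a logarithm appears (it is the finite part of the pole of $\Gamma((N-n)/2)$), but as you yourself flag, the limit $n\nearrow N$ requires justification that the renormalized limit really solves $\Lambda^N G=\delta_0$, especially for odd $N$ where $\Lambda^N$ is genuinely nonlocal; the density argument you sketch is plausible but not entirely routine. The paper's route avoids this limiting step altogether at the cost of three separate computations, each elementary once the subcritical formula is in hand. Note also that the paper defers the uniqueness claims (which you fold into this proof) to the subsequent proposition.
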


\begin{proof}
We use
$$
\mathcal{F}(\Lambda^n G_{n,N})=\mathcal{F}(\delta_0)=1,
$$
to find
$$
\mathcal{F}(G_{n,N})(\xi)=(2 \pi |\xi|)^{-n} \, \forall \, \xi \in \mathbb{R}^N \setminus \{0\}.
$$
When $0<n<N$, $\mathcal{F}(G_{n,N})(\xi)$ is well defined in $L^1(\mathbb{R}^N) + L^\infty(\mathbb{R}^N)$, and therefore it
is well defined as a Schwartz distribution.
Now, an argument akin to that in the proof of Lemma~\ref{liouville} yields
that indeed $\mathcal{F}(G_{n,N})(\xi)=(2 \pi |\xi|)^{-n}$ in $\mathcal{S}^*(\mathbb{R}^N)$.
The statement follows by Fourier inversion.

If $n=N$, then
$$
\mathcal{F}(G_{N})(\xi)=(2 \pi |\xi|)^{-N} \, \forall \, \xi \in \mathbb{R}^N \setminus \{0\}.
$$
Therefore in this case $\mathcal{F}(G_{N}) \not\in L^1_{\text{loc}}(\mathbb{R}^N)$ and it does not even define a singular integral operator.
Consequently our approach in this case will be different; lets start with $\mathbb{R}^2$, in this case
$$
\Lambda^2 G(x)= \delta_0 \Longleftrightarrow -\Delta G(x)= \delta_0,
$$
and so
$$
G(x)=-(2 \pi)^{-1} \log|x|.
$$

Now focus in $N \ge 3$ and compute
\begin{eqnarray} \nonumber
\Lambda^n \log|x| &=& \Lambda^{n-2} (\Lambda^2 \log |x|) \\ \nonumber
&=& \Lambda^{n-2} [(-\Delta) \log |x|] \\ \nonumber
&=& \Lambda^{n-2} \left( \frac{2-n}{|x|^2} \right).
\end{eqnarray}
By means of Fourier transform we find
\begin{eqnarray} \nonumber
\mathcal{F}(\Lambda^n \log|x|)(\xi) &=& (2 \pi |\xi|)^{n-2} \mathcal{F}\left( \frac{2-n}{|x|^2} \right)(\xi) \\ \nonumber
&=& (2 \pi |\xi|)^{n-2}(2-n)\pi^{2-n/2}\Gamma(n/2-1) |\xi|^{-(n-2)} \\ \nonumber
&=:& C_N^{-1};
\end{eqnarray}
note that $C_N$ is always well defined for $N \ge 3$.
Therefore
$$
\mathcal{F}[\Lambda^n (C_N \log|x|)](\xi) = 1 \Longleftrightarrow \Lambda^n (C_N \log|x|) = \delta_0.
$$

It only rests to show that $\Lambda G(x) = \delta_0$ in $\mathbb{R}$. We remind the reader that $G(x) \propto \log|x|$, $d\log|x|/dx=x^{-1}$
and that $x^{-1}$ defines a Schwartz distribution when interpreted as a principal value; in this case
$$
\mathcal{F}\left[ \text{P. V.} \left(\frac{1}{x}\right) \right](\xi)= i \pi \, \text{sgn}(\xi).
$$
Now compute
\begin{eqnarray}\nonumber
i \pi \, \text{sgn}(\xi) &=& \mathcal{F}\left( \frac{d\log|x|}{dx} \right)(\xi), \\ \nonumber
&=& 2 \pi i \, \xi \, \mathcal{F}\left( \log|x| \right)(\xi) \\ \nonumber
&\Rightarrow& \mathcal{F}\left( \log|x| \right)(\xi)= \frac{1}{2|\xi|} \quad \text{if} \quad \xi \neq 0.
\end{eqnarray}
Clearly, $|\xi|^{-1} \not\in S^*(\mathbb{R})$, and therefore $\mathcal{F}\left( \log|x| \right)(\xi)$ has to be interpreted as a renormalization
of $(2|\xi|)^{-1}$.
Now consider
\begin{eqnarray}\nonumber
\mathcal{F}\left( \Lambda^{1/2} \log|x| \right)(\xi) &=& (2 \pi |\xi|)^{1/2} \mathcal{F}\left(\log|x| \right)(\xi) \\ \nonumber
&=& (2 \pi |\xi|)^{1/2} (2|\xi|)^{-1} \\ \nonumber
&=& \sqrt{\frac{\pi}{2}} \, |\xi|^{-1/2},
\end{eqnarray}
if $\xi \neq 0$.
Regularizing the singularity of $\mathcal{F}(\log|x|)(\xi)$ at the origin and letting the regularization parameter go to zero we find
$$
\mathcal{F}\left( \Lambda^{1/2} \log|x| \right)(\xi) = \sqrt{\frac{\pi}{2}} \, |\xi|^{-1/2} \quad \text{in} \quad S^*(\mathbb{R}).
$$
Finally
\begin{eqnarray}\nonumber
\mathcal{F}\left( \Lambda \log|x| \right)(\xi) &=& \mathcal{F}\left[ \Lambda^{1/2} \left( \Lambda^{1/2} \log|x| \right) \right](\xi) \\ \nonumber
&=& (2 \pi |\xi|)^{1/2} \sqrt{\frac{\pi}{2}} \, |\xi|^{-1/2} \\ \nonumber
&=& \pi = \mathcal{F}(\pi \delta_0),
\end{eqnarray}
in $S^*(\mathbb{R})$.
\end{proof}

\begin{proposition}
The distribution $G_{n,N}(x)$ is well defined and, in particular:
\begin{itemize}
\item If $G(x)$ solves $\Lambda^n G = \delta_0$, $0 < n <N$, and $G(x) \to 0$ when $|x| \to \infty$, then $G=G_{n,N}$.
\item If $G(x)$ solves $\Lambda^N G = \delta_0$ and $G(x) \in \text{BMO}(\mathbb{R}^N)$, then $G-G_N$ is constant,
i.~e. $G \equiv G_N$ in $\text{BMO}(\mathbb{R}^N)$.
\end{itemize}
\end{proposition}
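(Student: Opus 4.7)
The plan is to deduce both uniqueness statements from the fact that $\Lambda^n$ has essentially no nontrivial kernel in the relevant function classes, following exactly the strategy already used in Lemma~\ref{liouville}. Existence is supplied by the previous proposition (the explicit formulas for $G_{n,N}$), so only uniqueness requires work; the existence part of the current statement is really just the reinterpretation of those formulas as elements of $\mathcal{S}^*(\mathbb{R}^N)$.

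First I would set $v := G - G_{n,N}$ and record that $\Lambda^n v = 0$ in $\mathcal{S}^*(\mathbb{R}^N)$. For this step one has to verify that $v$ is a tempered distribution. In the case $0<n<N$, this follows from the explicit decay $G_{n,N}(x)\propto |x|^{n-N}$ together with the hypothesis $G(x)\to 0$ at infinity (and local integrability supplied by the distributional formulation of the equation). In the critical case $n=N$, both $G$ and $G_N = C_N\log|x|$ lie in $\text{BMO}(\mathbb{R}^N)\subset \mathcal{S}^*(\mathbb{R}^N)$, hence $v\in\text{BMO}(\mathbb{R}^N)\subset \mathcal{S}^*(\mathbb{R}^N)$ as well.

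Next I would take the Fourier transform and use $\mathcal{F}(\Lambda^n v)(\xi) = (2\pi|\xi|)^n\,\hat{v}(\xi)$ to get $|\xi|^n\hat{v}(\xi)=0$. Since $|\xi|^n$ is smooth and non-vanishing on $\mathbb{R}^N\setminus\{0\}$, this identity forces $\hat{v}$ to vanish on that open set, so $\mathrm{supp}(\hat{v})\subset\{0\}$. The classical structure theorem for distributions supported at a single point then gives $\hat{v}=\sum_{|\alpha|\le \ell}c_\alpha\,\partial^\alpha\delta_0$ for some finite $\ell$ and constants $c_\alpha$, so $v$ is a polynomial of degree at most $\ell$. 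This is the same mechanism as in Lemma~\ref{liouville}, only with $(-\Delta)^m$ replaced by $\Lambda^n$, and the smoothness of the symbol away from the origin is what makes the replacement work.

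Finally I would close the two cases separately. In the subcritical case $0<n<N$ the polynomial $v$ obeys $v(x)\to 0$ as $|x|\to\infty$, hence $v\equiv 0$ and $G=G_{n,N}$. In the critical case $n=N$ the polynomial $v$ belongs to $\text{BMO}(\mathbb{R}^N)$; by the John--Nirenberg theorem, already invoked in Lemma~\ref{liouville}, $\text{BMO}$ excludes super-logarithmic growth, so $v$ must be constant and consequently $G\equiv G_N$ in the quotient space where the $\text{BMO}$ seminorm becomes a norm. The main obstacle, in my view, is the very first step: pinning down a precise sense of the decay hypothesis ``$G(x)\to 0$ when $|x|\to\infty$'' that is compatible with the distributional equation and that guarantees $v\in\mathcal{S}^*(\mathbb{R}^N)$. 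Once this is arranged, the Fourier argument and the closing Liouville-type reasoning are essentially routine.
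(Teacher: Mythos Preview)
Your proposal is correct and follows exactly the approach the paper intends: the paper's proof is a one-line reference, ``The existence of this distribution was proven in the previous Proposition and its uniqueness follows analogously as in the proof of Lemma~\ref{liouville},'' and you have simply unpacked that reference in full detail. Your observation that the symbol $|\xi|^n$ need only be smooth and nonvanishing away from the origin (so that odd $n$ causes no trouble) is the one point worth making explicit beyond what Lemma~\ref{liouville} literally contains, and you have done so.
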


\begin{proof}
The existence of this distribution was proven in the previous Proposition and its uniqueness follows analogously as
in the proof of Lemma~\ref{liouville}.
\end{proof}

\begin{theorem}
Let $n \in \mathbb{Z}$, $0 < n \le N$, and
$$
\Lambda^n u = f \quad \text{in} \quad \mathbb{R}^N.
$$
Then $\partial_x^\alpha u = A_{n,N} R^\alpha(f)$ for some constant $A_{n,N}$, where $|\alpha|=n$, the monomial
$\partial^\alpha_x = \partial_{x_{j_1}} \cdots \partial_{x_{j_n}}$, $R^\alpha = R_{x_{j_1}} \cdots R_{x_{j_n}}$
and $R_{x_{1}}, \cdots, R_{x_{n}}$ are the corresponding Riesz transforms in $\mathbb{R}^N$.
\end{theorem}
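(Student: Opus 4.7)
The natural strategy is to pass to the Fourier side, where both the hypothesis and the conclusion become multiplier statements. Using the conventions fixed earlier in the paper, one has $\mathcal{F}(\Lambda f)(\xi)=2\pi|\xi|\,\hat{f}(\xi)$ (the Proposition following Definition~\ref{lambda}) and $\mathcal{F}(R_{x_j}f)(\xi)=\pi i\,\xi_j/|\xi|\,\hat{f}(\xi)$ (the Remark after the definition of the Riesz transforms). Comparing the symbol of $\partial_{x_j}$, namely $2\pi i\,\xi_j$, with the symbol of $R_{x_j}\Lambda$, namely $(\pi i\,\xi_j/|\xi|)(2\pi|\xi|)=2\pi^2 i\,\xi_j$, yields the one-derivative operator identity
$$
\partial_{x_j}=\pi^{-1}R_{x_j}\Lambda,
$$
valid on the appropriate classes of tempered distributions.

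Iterating this identity $n$ times along the coordinates prescribed by $\alpha=(j_1,\dots,j_n)$ gives $\partial^\alpha=\pi^{-n}R^\alpha\Lambda^n$. Plugging $u$ into the right-hand side and using the hypothesis $\Lambda^n u=f$ we obtain
$$
\partial^\alpha u=\pi^{-n}R^\alpha\Lambda^n u=\pi^{-n}R^\alpha f,
$$
so that $A_{n,N}=\pi^{-n}$. A side remark is that the constant is actually independent of $N$; the subscript $N$ in the statement simply acknowledges that the operators live in $\mathbb{R}^N$.

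The main technical point is making the symbol calculus rigorous, since the relation $(2\pi|\xi|)^n\hat{u}(\xi)=\hat{f}(\xi)$ only determines $\hat{u}$ in $\mathcal{S}^\ast(\mathbb{R}^N)$ up to distributions supported at the origin, i.e.\ up to polynomials, by the argument already used in the proof of Lemma~\ref{liouville}. This ambiguity is harmless for our conclusion: any multi-index derivative $\partial^\alpha$ with $|\alpha|=n$ annihilates polynomials of degree strictly less than $n$, and the residual polynomial freedom is removed by the normalizing conditions selected for the solution class (vanishing at infinity when $n<N$ and the $\text{BMO}$ normalization when $n=N$, consistently with Definition~\ref{funsol2}). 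Since the multiplier $\xi^\alpha/|\xi|^n$ is bounded on $\mathbb{R}^N\setminus\{0\}$, its pairing against $\hat{f}$ is meaningful for $f$ in the function spaces considered throughout, and the intermediate quantities $\Lambda^n u$ and $R^\alpha f$ are unambiguously defined; with these reductions the Fourier-side identity transfers back to the physical side and the theorem follows.
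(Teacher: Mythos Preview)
Your proof is correct and follows a genuinely different route from the paper's. You work entirely on the Fourier side, exploiting the multiplier identity $\partial_{x_j}=c\,R_{x_j}\Lambda$ and iterating it $n$ times; this treats all values of $n\le N$ uniformly in one line and even pins down the constant $A_{n,N}=\pi^{-n}$ (in the paper's normalization) as dimension-independent. The paper instead argues on the physical side: it writes $u=G_{n,N}\ast f$, identifies $\Lambda^{-1}f$ as the Riesz potential with kernel $|x|^{-(N-1)}$, differentiates under the integral to produce one Riesz transform, and iterates. Because the fundamental solution $G_{n,N}$ changes form at the critical index, the paper must split into the cases $0<n<N$, $n=N\ge 3$, $n=N=2$, and $n=N=1$, handling each explicitly. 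Your symbol-calculus argument is cleaner and shorter; the paper's version has the compensating virtue of making the intermediate Riesz-potential representation of $\Lambda^{n-1}u$ explicit, which ties in with the integral machinery used elsewhere in Section~\ref{nproblems}. Your treatment of the polynomial ambiguity (supported-at-the-origin part of $\hat{u}$) via the normalizations of Definition~\ref{funsol2} is the right way to close the argument and matches the implicit choice $u=G_{n,N}\ast f$ made in the paper's proof.
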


\begin{proof}
We start with the subcritical case $0 < n < N$:
$$
u(x)= (G_{n,N} \ast f)(x) \equiv \Lambda^{-n} f.
$$
We can write
$$
\Lambda^{n-1} u= \Lambda^{-1} f= C_N \int_{\mathbb{R}^N} \frac{f(y)}{|x-y|^{N-1}} \, dy,
$$
and thus
\begin{eqnarray}\nonumber
\partial_{x_j} \Lambda^{n-1} u &=& C_N (1-N) \,\, \text{P. V.} \int_{\mathbb{R}^N} \frac{x_j - y_j}{|x-y|^{N+1}} \, f(y) \, dy \\ \nonumber
&=& D_N R_{x_j}(f),
\end{eqnarray}
where $D_N \neq 0$ since $N \ge 2$.
Therefore,
\begin{eqnarray}\nonumber
\partial_x^\alpha u &=& \partial_{x_{j_1}} (\partial_{x_{j_2}} \cdots \partial_{x_{j_2}}) \Lambda^{1-n} (\Lambda^{n-1} u) \\ \nonumber
&=& (\partial_{x_{j_2}} \cdots \partial_{x_{j_n}}) \Lambda^{1-n} (\partial_{x_{j_1}} \Lambda^{n-1} u) \\ \nonumber
&=& D_N R_{x_{j_2}} \cdots R_{x_{j_n}} (R_{x_{j_1}} f) \\ \nonumber
&=& D_N R^\alpha(f).
\end{eqnarray}

Now we move to the case $n=N \ge 3$. We know $u=G_N \ast f$ where $G_N = C_N \log|x|$. Then
\begin{eqnarray}\nonumber
\Lambda^{N-1} u &=& \Lambda^{N-3} (-\Delta u) \\ \nonumber
&=& \Lambda^{N-3} [(-\Delta G_N) \ast f],
\end{eqnarray}
where $-\Delta G_N = C_N (2-N) |x|^{-2}$. Therefore
$$
\Lambda^{N-1} u = C_N \int_{\mathbb{R}^N} \frac{f(y)}{|x-y|^{N-1}} \, dy,
$$
where $C_N \neq 0$ and the rest of the proof follows as in the previous case.

When $n=N=2$ we write $u=-(2\pi)^{-1} \log|x| \ast f(x)$ and therefore
\begin{eqnarray}\nonumber
\partial_{x_j} u &=& - \frac{1}{2\pi} \int_{\mathbb{R}^2} \frac{x_j - y_j}{|x-y|^2} \, f(y) \, dy \\ \nonumber
&=& - \frac{1}{2\pi} \int_{\mathbb{R}^2} \frac{y_j}{|y|^2} \, f(x-y) \, dy.
\end{eqnarray}
Finally we have
\begin{eqnarray}\nonumber
\Lambda \partial_{x_j} u(x) &\propto& \text{P. V.} \int_{\mathbb{R}^2} \frac{y_j}{|y|^3} \, f(x-y) \, dy \\ \nonumber
&\propto& R_{x_j} (f)(x),
\end{eqnarray}
and thus
$$
\partial_{x_j} \partial_{x_k} u = (R_{x_k} \Lambda) \partial_{x_j} u \propto R_{x_j} R_{x_k} u.
$$

The case $n=N=1$ comes from the fact that
$$
u(x)= \frac{1}{\pi} \int_{\mathbb{R}} \log|x-y| \, f(y) \, dy,
$$
and the fact that
$$
u'(x)= \frac{1}{\pi} \,\, \text{P. V.} \int_{\mathbb{R}} \frac{x_j - y_j}{|x-y|^2} \, f(y) \, dy,
$$
which is the Hilbert transform of $f$.
\end{proof}

\begin{corollary}\label{exunnl}
The linear equation
$$
\Lambda^n u = \lambda f, \qquad x \in \mathbb{R}^N,
$$
has a unique solution in the following cases:
\begin{itemize}
\item[(a)] $f \in L^p(\mathbb{R}^N), \quad 1 < p < \frac{N}{n}, \quad n < N$,
\item[(b)] $f \in L^1(\mathbb{R}^N), \quad n < N$,
\item[(c)] $f \in \mathcal{H}^1(\mathbb{R}^N), \quad n < N$,
\item[(d)] $f \in \mathcal{H}^1(\mathbb{R}^N), \quad n = N$.
\end{itemize}
Then, respectively
\begin{itemize}
\item[(a)] $u \in \dot{W}^{n-\epsilon,N p/(N-\epsilon p)}(\mathbb{R}^N) \, \forall \, 0 \le \epsilon \le n$,
\item[(b)] $u \in \dot{W}^{n-\epsilon,N/(N-\epsilon)}(\mathbb{R}^N) \, \forall \, 0<\epsilon \le n$,
\item[(c)] $u \in \dot{W}^{n-\epsilon,N/(N-\epsilon)}(\mathbb{R}^N) \, \forall \, 0 \le \epsilon \le n$,
\item[(d)] $u \in \dot{W}^{N-\epsilon,N/(N-\epsilon)}(\mathbb{R}^N) \, \forall \, 0 \le \epsilon \le N$.
\end{itemize}
Moreover, in case (b), $D^{n} u \in L^{1,\infty}(\mathbb{R}^N)$ and, in cases (c) and (d), $D^{n} u \in \mathcal{H}^{1}(\mathbb{R}^N)$.
\end{corollary}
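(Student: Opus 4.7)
The strategy mirrors Proposition~\ref{exun}, leveraging the preceding theorem on the Riesz-transform representation of $\partial^\alpha u$ and the explicit form of $G_{n,N}$ to bypass the earlier explicit singular-integral computation.

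First, I would define the candidate solution by convolution, $u := \lambda\, G_{n,N} \ast f$, and check it is well-defined and gives the base-level integrability in each of the four cases. For cases (a), (b), (c) we have $n<N$ and $G_{n,N}(x) \propto |x|^{n-N}$, so convolution with $G_{n,N}$ is (a constant multiple of) the Riesz potential $I_n$. Hardy--Littlewood--Sobolev then gives $\|u\|_{Np/(N-np)} \ll |\lambda|\,\|f\|_p$ in case (a) for $1<p<N/n$, the weak-type inequality $\|D^0 u\|_{N/(N-n),\infty} \ll |\lambda|\,\|f\|_1$ in case (b), and the improved endpoint bound $\|u\|_{N/(N-n)} \ll |\lambda|\,\|f\|_{\mathcal{H}^1}$ in case (c) (using that $I_n$ maps $\mathcal{H}^1 \to L^{N/(N-n)}$). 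For case (d), $n=N$ and $G_N \propto \log|x|\in\text{BMO}(\mathbb{R}^N)$, so the $\mathcal{H}^1$--$\text{BMO}$ duality yields $\|u\|_{\infty} \ll |\lambda|\,\|f\|_{\mathcal{H}^1}$.

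Next, for the top-level regularity I would invoke the preceding theorem, which gives $\partial^\alpha u = A_{n,N} R^\alpha(f)$ for every multi-index with $|\alpha|=n$, where $R^\alpha$ is an iterated composition of Riesz transforms. Since Riesz transforms are bounded on $L^p$ for $1<p<\infty$, of weak type $(1,1)$, and bounded on $\mathcal{H}^1$, I immediately get $D^n u \in L^p(\mathbb{R}^N)$ in case (a), $D^n u \in L^{1,\infty}(\mathbb{R}^N)$ in case (b) (so in particular $D^n u \in L^1_{\mathrm{loc}}$, giving the stated Sobolev regularity up to a smaller top index via the weak-type bound), and $D^n u \in \mathcal{H}^1(\mathbb{R}^N)$ in cases (c) and (d). The intermediate statements $u \in \dot{W}^{n-\varepsilon,\,Np/(N-\varepsilon p)}(\mathbb{R}^N)$, etc., then come from combining the base integrability with the top derivative bound via the standard Sobolev embedding for homogeneous fractional spaces (interpolating between $L^q$ for the zeroth derivative and $L^p$/$\mathcal{H}^1$/$L^{1,\infty}$ for the top one).

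Finally, for uniqueness I would argue as in Proposition~\ref{exun} and Lemma~\ref{liouville}: if $u_1, u_2$ are two solutions in the stated space, then $v := u_1 - u_2$ satisfies $\Lambda^n v = 0$ in $\mathcal{S}^*(\mathbb{R}^N)$, so $|\xi|^n \hat{v}(\xi) = 0$, forcing $\mathrm{supp}(\hat{v})\subset\{0\}$ and hence $v$ to be a polynomial. The ``boundary condition at infinity'' encoded in the functional space (vanishing in the appropriate $L^q$ or BMO sense) forces $v\equiv 0$ in the respective quotient (as in Remarks~\ref{ung}--\ref{remun}).

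The main obstacle I expect is case (d), the critical case $n=N$: the fundamental solution $G_N$ is only defined modulo constants, and $u = \lambda G_N \ast f$ must be interpreted carefully, with uniqueness only holding in the $\text{BMO}/\mathbb{R}$ quotient. Here I would rely on the fact, recalled in the excerpt, that functions in $\mathcal{H}^1(\mathbb{R}^N)$ have zero mean, so the additive constant ambiguity in $G_N$ does not affect the convolution; combined with $L^\infty$-boundedness via $\mathcal{H}^1$--$\text{BMO}$ duality and the Riesz-transform control of $D^N u$, the full statement of case (d) then falls out.
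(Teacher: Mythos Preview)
Your proposal is correct and takes essentially the same approach as the paper: the corollary is stated without proof there, being an immediate consequence of the preceding theorem ($\partial^\alpha u = A_{n,N} R^\alpha f$) combined with the arguments already given for Proposition~\ref{exun} and Corollary~\ref{corexun}, and your write-up spells out exactly these ingredients. One small point worth tightening: in case~(b) you appeal to the weak-type $(1,1)$ bound for iterated Riesz transforms, which does not follow formally from the weak-type $(1,1)$ of each factor, but holds because $R^\alpha$ is itself a single Calder\'on--Zygmund operator (its multiplier $\prod_j \xi_{\alpha_j}/|\xi|$ is smooth and homogeneous of degree zero away from the origin).
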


Now we state the main result of this section. Of course, $N, k \in \mathbb{N}$ always and we also assume $N > 2k$.

\begin{theorem}\label{exnlnl}
Equation~\eqref{rkhessiannl} has at least one weak solution in the following cases:
\begin{itemize}
\item[(a)] $f \in L^p(\mathbb{R}^N), \quad 1 < p < \frac{N}{2k}, \quad n = 2 + N (k-1)/(pk) \in \mathbb{N}$,
\item[(b)] $f \in L^1(\mathbb{R}^N), \quad n=2 + N (k-1)/k \in \mathbb{N}$,
\item[(c)] $f \in \mathcal{H}^1(\mathbb{R}^N), \quad n=2 + N (k-1)/k \in \mathbb{N}$,
\end{itemize}
provided $|\lambda|$ is small enough.
Then, respectively
\begin{itemize}
\item[(a)] $u \in \dot{W}^{n-\epsilon,N p/(N-\epsilon p)}(\mathbb{R}^N) \, \forall \, 0 \le \epsilon \le n$,
\item[(b)] $u \in \dot{W}^{n-\epsilon,N/(N-\epsilon)}(\mathbb{R}^N) \, \forall \, 0<\epsilon \le n$,
\item[(c)] $u \in \dot{W}^{n-\epsilon,N/(N-\epsilon)}(\mathbb{R}^N) \, \forall \, 0 \le \epsilon \le n$.
\end{itemize}
Moreover, in case (b), $D^{n} u \in L^{1,\infty}(\mathbb{R}^N)$ and, in case (c), $D^{n} u \in \mathcal{H}^{1}(\mathbb{R}^N)$.
Also, for a smaller enough $|\lambda|$, the solution is locally unique in every case.
\end{theorem}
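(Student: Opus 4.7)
The plan is to transfer the arguments from the local case, following the proofs of Theorems~\ref{exh1}, \ref{exl1}, \ref{exlp}, \ref{exul1}, \ref{exulp} almost verbatim, with the nonlocal pseudo-differential operator $\Lambda^n$ replacing $(-\Delta)^m$ throughout. The crucial point is that Corollary~\ref{exunnl} supplies exactly the same mapping properties on homogeneous Sobolev spaces for the linear equation $\Lambda^n u = \lambda f$ that Proposition~\ref{exun} and Corollary~\ref{corexun} provided for $(-\Delta)^m u = \lambda f$, with $n$ playing the role of $2m$. Moreover, the algebraic condition on the exponent is aligned: $n = 2 + N(k-1)/(pk)$ (case (a)) and $n = 2 + N(k-1)/k$ (cases (b) and (c)) correspond exactly to $2m$ in the local setting.

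First I would set up, for each of the three cases, the fixed point operator $\mathcal{T}$ sending $w$ to the unique solution $u$ of $\Lambda^n u = S_k[-w] + \lambda f$ with $u(x)\to 0$ at infinity. The codomain is chosen as in the local case: $\mathcal{H}^1(\mathbb{R}^N)$ in case (c), $\dot{W}^{n-1,N/(N-1)}(\mathbb{R}^N)$ in case (b), and $\dot{W}^{n,p}(\mathbb{R}^N)$ in case (a). Lemma~\ref{mvalues} (and the analogous $L^p$--version used in Theorem~\ref{exlp}) goes through without modification, since it depends only on the Sobolev embedding and on the divergence form~\eqref{divform}, neither of which involves the inversion operator; the condition on $n$ here is precisely what is needed so that the Sobolev embedding places $\psi$ in $\dot{W}^{2,kp}$ or $\dot{W}^{2,k}$ as appropriate. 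Combining this with Corollary~\ref{exunnl} one checks, exactly as in Theorems~\ref{exh1}, \ref{exl1} and \ref{exlp}, that $\mathcal{T}$ maps a ball around the linear solution $u_0 = \lambda \Lambda^{-n} f$ into itself provided $|\lambda|$ and the radius $R$ are small enough.

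Next I would establish weak$-*$ sequential continuity of $\mathcal{T}$. The weak$-*$ continuity of $S_k[\cdot]$ (Proposition~\ref{wcont} and Corollaries~\ref{cwcont}, \ref{cwcont2}) is proven via the divergence form and the Rellich--Kondrachov theorem, neither of which refers to the polyharmonic operator, so these results transfer verbatim with $n$ in place of $2m$. The remaining step, i.e.\ transporting weak$-*$ convergence through $\Lambda^{-n}$, is handled exactly as in the proofs of Theorems~\ref{exh1} and~\ref{exl1}: one identifies the dual space using Remark~\ref{remvmo} (which gives $\dot{W}^{1,N}\hookrightarrow\mathrm{VMO}$) to conclude that the pointwise convergence of dual pairings against $\hat\phi\in I_{-n}(\mathrm{VMO})(\mathbb{R}^N)$ upgrades to weak$-*$ convergence in the target space. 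One then invokes Theorem~\ref{fpt} to produce a fixed point $u$, which is the desired weak solution, and reads off the regularity from Corollary~\ref{exunnl}.

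For local uniqueness (the last sentence of the statement), I would mimic the Banach contraction argument used in Theorem~\ref{exul1}: subtract two copies of the equation, express $S_k[-w_1] - S_k[-w_2]$ in the form $\frac{1}{k}\sum_{i,j}\bigl((w_1)_{x_ix_j}S_k^{ij}[w_1] - (w_2)_{x_ix_j}S_k^{ij}[w_2]\bigr)$ using $\sum_i \partial_{x_i}S_k^{ij}[\psi]=0$, and exploit a Brezis--type inequality combined with Corollary~\ref{exunnl} to obtain a Lipschitz constant $\ll (|\lambda|\|f\| + R)^{k-1}$ that can be made smaller than $1/2$. The main obstacle I anticipate is not the scheme itself but bookkeeping the analogues of Lemma~\ref{mvalues} in the odd-$n$ regime (where $\Lambda^n$ is truly nonlocal and $\dot{W}^{n,p}$ is a less familiar space), in particular verifying that no extra integrability is lost when $n$ is odd; however, since Corollary~\ref{exunnl} is already stated for all $n\in\mathbb{Z}$ with $0<n\le N$, this is really only a matter of carefully re-reading the chain of Sobolev embeddings used in each case.
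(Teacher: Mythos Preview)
Your proposal is correct and follows exactly the paper's approach: the paper's proof is a single sentence stating that the result follows from Corollary~\ref{exunnl} together with the arguments of Sections~\ref{existence} and~\ref{locuniq}, and you have simply written out what that transfer entails. Your identification $n\leftrightarrow 2m$, the choice of fixed-point space in each case, and the observation that Lemma~\ref{mvalues}, Proposition~\ref{wcont} and the contraction argument of Theorem~\ref{exul1} carry over unchanged are all precisely what is intended.
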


\begin{proof}
The proof follows as a consequence of corollary~\ref{exunnl} and going through the same arguments as in Section~\ref{existence}
and~\ref{locuniq}.
\end{proof}

\begin{remark}
The case $N=2k$ was already examined in Theorem~\ref{exh1}.
\end{remark}

\section{Further results}
\label{further}

Our previous results imply the weak continuity of the branch of solutions that departs from $u=0$ and $\lambda=0$ under certain conditions.

\begin{theorem}
\label{wcbranch}
Let
\begin{eqnarray}\nonumber
\Phi: D(\Phi) \subset \mathcal{B} &\longrightarrow& \mathcal{B} \\ \nonumber
v &\longmapsto& u(v),
\end{eqnarray}
where $u$ is the unique solution to
$$
\Lambda^n u = S_k[-u] + \lambda f, \qquad x \in \mathbb{R}^N,
$$
$v= \Lambda^{-n} f$, $\mathcal{B} = \dot{W}^{n,p}(\mathbb{R}^N)$,
$f \in L^p(\mathbb{R}^N)$ and the rest of hypotheses as in Theorem~\ref{exnlnl}.
Then $\Phi$ is weakly continuous, i.~e. $\forall \, \{v_j\}_j \subset D(\Phi)$ such that
$$
v_j \rightharpoonup v \text{ weakly in } \mathcal{B},
$$
it holds that
$$
\Phi(v_j) \rightharpoonup \Phi(v) \text{ weakly in } \mathcal{B}.
$$
\end{theorem}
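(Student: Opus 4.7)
The plan is to combine three ingredients already established in the paper: the equation $\Lambda^n u = S_k[-u] + \lambda f$ can be passed to the limit termwise, the $k-$Hessian $S_k[\cdot]$ is weakly sequentially continuous in the appropriate Sobolev topology (Corollary~\ref{cwcont2} together with the $L^p$-version of the argument worked out in the proof of Theorem~\ref{exlp}), and the solution $\Phi(v)$ is locally unique in $\mathcal{B}$ when $|\lambda|$ is sufficiently small (Theorem~\ref{exnlnl}). The conclusion is then obtained by a routine subsequence-identification argument.

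More precisely, suppose $v_j \rightharpoonup v$ weakly in $\mathcal{B}$, and set $u_j := \Phi(v_j)$ and $f_j := \Lambda^n v_j$. Since $v_j$ is bounded in $\mathcal{B}$, $f_j$ is bounded in $L^p(\mathbb{R}^N)$, so the fixed-point construction of Theorem~\ref{exnlnl} produces the $u_j$ inside a common ball $B_R \subset \mathcal{B}$, whose radius $R$ can be chosen uniformly in $j$ because it depends only on $|\lambda|$ and on the uniform bound for $\|f_j\|_p$. Treating $\mathcal{B}$ as a quotient Banach space as in Remark~\ref{remeqcla} (reflexive since $1<p<\infty$), the bounded sequence $u_j$ admits a subsequence $u_{j_k} \rightharpoonup u^\ast$ weakly in $\mathcal{B}$. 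The left-hand side of $\Lambda^n u_{j_k} = S_k[-u_{j_k}] + \lambda f_{j_k}$ then converges weakly in $L^p$ to $\Lambda^n u^\ast$ by continuity of the bounded linear map $\Lambda^n: \mathcal{B} \to L^p(\mathbb{R}^N)$, while $\lambda f_{j_k} \rightharpoonup \lambda f$ in $L^p$ by hypothesis. For the nonlinear term, the weak-continuity argument that appears in the proof of Theorem~\ref{exlp} (integrate by parts to reveal the divergence form of $S_k$, then use Rellich-Kondrachov strong convergence of the lower-order factors) gives $S_k[-u_{j_k}] \rightharpoonup S_k[-u^\ast]$ in $L^p(\mathbb{R}^N)$. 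Consequently $u^\ast$ solves $\Lambda^n u^\ast = S_k[-u^\ast] + \lambda f$.

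It remains to identify $u^\ast$ with $\Phi(v)$. By weak lower semicontinuity of the norm, $\|u^\ast - u_0\|_\mathcal{B} \le \liminf_k \|u_{j_k} - u_0^{(k)}\|_\mathcal{B} \le R$, where $u_0 = \lambda \Lambda^{-n} f$ and $u_0^{(k)} = \lambda \Lambda^{-n} f_{j_k}$ (one uses that $u_0^{(k)} \rightharpoonup u_0$ in $\mathcal{B}$, which is immediate from the continuity of $\Lambda^{-n}: L^p \to \mathcal{B}$ in the weak topology). Hence $u^\ast$ lies in the very ball on which Theorem~\ref{exnlnl} asserts local uniqueness for $|\lambda|$ small enough, forcing $u^\ast = \Phi(v)$. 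Since the weak limit of every weakly convergent subsequence of $\{\Phi(v_j)\}_j$ is therefore the same, the Urysohn subsequence principle guarantees $\Phi(v_j) \rightharpoonup \Phi(v)$ weakly in $\mathcal{B}$.

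The main technical obstacle is bookkeeping around the weak continuity of $S_k$: one must check that the convergence really takes place in a topology strong enough to pair against the test objects dual to $\mathcal{B}$, and simultaneously weak enough to be furnished by the compactness already available. This is exactly the content of the proof of Theorem~\ref{exlp}, which is reused here essentially verbatim; the remaining arguments (uniform boundedness of $u_j$, selection of the common ball, and the uniqueness-based identification of the limit) are straightforward consequences of Theorem~\ref{exnlnl} and Corollary~\ref{exunnl}.
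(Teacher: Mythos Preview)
Your proof is correct and follows essentially the same strategy as the paper: restrict to the fixed-point ball of Theorem~\ref{exnlnl}/\ref{exulp}, rewrite the equation as $u_j = \Lambda^{-n}(S_k[-u_j]) + \lambda v_j$, and pass to the weak limit using the weak continuity of $S_k[\cdot]$ in $L^p$ established inside the proof of Theorem~\ref{exlp}. The paper's own proof is considerably terser---it simply says ``take the limit $j\to\infty$ and conclude by weak continuity of $S_k$''---whereas you spell out the subsequence extraction, the identification of the limit via local uniqueness, and the Urysohn argument; these are exactly the details implicit in the paper's one-line conclusion.
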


\begin{proof}
Take $D(\Phi)$ to be the ball in $\mathcal{B}$ used in Theorem~\ref{exulp}.
Then we know $\Phi$ is well defined and moreover $\Phi: D(\Phi) \longrightarrow D(\Phi)$.
We rewrite our equation
$$
u_j = \Lambda^{-n} \left( S_k[-u_j] \right) + \lambda v_j;
$$
we know that for every $v_j \in D(\Phi)$ there exist a unique solution $u_j \in D(\Phi)$.
Now take the limit $j \to \infty$ and we conclude by weak continuity of $S_k[\cdot]$ in $L^p(\mathbb{R}^N)$,
see the proof of Theorem~\ref{exlp}.
\end{proof}

We also have a comparatively weaker result for summable data.

\begin{theorem}
Let
\begin{eqnarray}\nonumber
\Phi: D(\Phi) \subset \mathcal{B} &\longrightarrow& \mathcal{B} \\ \nonumber
v &\longmapsto& u(v),
\end{eqnarray}
where $u$ is the unique solution to
$$
\Lambda^n u = S_k[-u] + \lambda f, \qquad x \in \mathbb{R}^N,
$$
$v= \Lambda^{-n} f$, $\mathcal{B} = \dot{W}^{n-1,N/(N-1)}(\mathbb{R}^N)$,
$f \in L^1(\mathbb{R}^N)$ and the rest of assumptions as in Theorem~\ref{exnlnl}.
Then $\Phi$ is weakly continuous, i.~e. $\forall \, \{v_j\}_j \subset D(\Phi)$ such that
$$
v_j \rightharpoonup v \text{ weakly in } \mathcal{B},
$$
it holds that
$$
\Phi(v_j) \rightharpoonup \Phi(v) \text{ weakly in } \mathcal{B}.
$$
\end{theorem}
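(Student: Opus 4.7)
The plan is to mirror the proof of Theorem~\ref{wcbranch}, replacing the weak continuity of $S_k$ in $L^p(\mathbb{R}^N)$ (which drove that argument) by the weak-$*$ continuity of $S_k$ into $\mathcal{H}^1(\mathbb{R}^N)$ coming from (the $\Lambda^n$-analogue of) Corollary~\ref{cwcont}, and then routing the result through the duality argument already exploited in Theorem~\ref{exh1}. The overall scheme is: take a bounded sequence of inputs, apply $\Phi$, extract a weakly convergent subsequence of outputs, pass to the limit in the fixed-point equation, and conclude by local uniqueness.

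Concretely, I would take $D(\Phi)$ to be the ball in $\mathcal{B}=\dot{W}^{n-1,N/(N-1)}(\mathbb{R}^N)$ produced by the fixed-point construction in the proof of Theorem~\ref{exul1} (rephrased for $\Lambda^n$ via Corollary~\ref{exunnl}), so that $\Phi:D(\Phi)\longrightarrow D(\Phi)$ is a well defined self-map and $u=\Phi(v)$ is the unique fixed point of $w\mapsto \Lambda^{-n}S_k[-w]+\lambda v$ in that ball. Given $v_j\rightharpoonup v$ weakly in $\mathcal{B}$, set $u_j:=\Phi(v_j)\in D(\Phi)$, which satisfies
$$
u_j=\Lambda^{-n}S_k[-u_j]+\lambda v_j
$$
and is uniformly bounded in $\mathcal{B}$; by reflexivity of $\mathcal{B}$, up to extraction $u_j\rightharpoonup\tilde{u}$ weakly in $\mathcal{B}$. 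I would then invoke the fractional version of Corollary~\ref{cwcont}, whose proof in Section~\ref{existence} goes over verbatim to $\Lambda^n$ since it uses only the divergence form of the $k-$Hessian and the suitable Rellich-Kondrachov compactness, to get $S_k[-u_j]\overset{*}{\rightharpoonup}S_k[-\tilde{u}]$ in $\mathcal{H}^1(\mathbb{R}^N)$. Once the limit is identified as $\tilde{u}=\Lambda^{-n}S_k[-\tilde{u}]+\lambda v$, local uniqueness (Theorem~\ref{exul1}) forces $\tilde{u}=\Phi(v)$, and the standard subsequence principle upgrades the convergence to the full sequence.

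I expect the main obstacle to be precisely the transfer of $\mathcal{H}^1$ weak-$*$ convergence of $S_k[-u_j]$ through $\Lambda^{-n}$ into weak convergence of $\Lambda^{-n}S_k[-u_j]$ in $\mathcal{B}$: since $\mathcal{H}^1(\mathbb{R}^N)\subsetneq L^1(\mathbb{R}^N)$ and $L^1$ is not a dual space, this step cannot be done by direct $L^1$ pairings. Instead, as in the proof of Theorem~\ref{exh1}, one has to pair with elements $\phi\in\{\mathcal{B}\}^*=\dot{W}^{1-n,N}(\mathbb{R}^N)$ and reinterpret each pairing, via the self-adjointness of $\Lambda^{-n}$, as the action of $\Lambda^{-n}\phi$ on $S_k[-u_j]$; the embedding $\dot{W}^{1-n,N}(\mathbb{R}^N)\hookrightarrow\mathrm{VMO}(\mathbb{R}^N)$ guaranteed by Remark~\ref{remvmo} makes $\Lambda^{-n}\phi$ an admissible $\mathrm{VMO}$ test function, at which point the weak-$*$ convergence of $S_k[-u_j]$ in $\mathcal{H}^1$ applies. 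This duality detour is exactly the reason for the ``comparatively weaker'' character alluded to in the statement: weak, not weak-$*$, continuity, accessible only through the $\mathcal{H}^1$--$\mathrm{VMO}$ pairing rather than through an $L^1$--$L^\infty$ one.
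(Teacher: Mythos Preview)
Your proposal is correct and follows essentially the same route as the paper, which simply says ``follows as the proof of Theorem~\ref{wcbranch} combined with the arguments regarding weak continuity in the proof of Theorem~\ref{exl1}.'' You have spelled out the details (subsequence extraction, identification of the limit via local uniqueness, subsequence principle) that the paper leaves implicit, and your identification of the $\mathcal{H}^1$--$\mathrm{VMO}$ duality step together with the embedding $\dot{W}^{1-n,N}(\mathbb{R}^N)\hookrightarrow\mathrm{VMO}(\mathbb{R}^N)$ from Remark~\ref{remvmo} is exactly the mechanism the paper invokes; note only that this argument is recorded verbatim in the proof of Theorem~\ref{exl1} (the reference the paper actually cites), not just in Theorem~\ref{exh1}.
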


\begin{proof}
The proof follows as the proof of Theorem~\ref{wcbranch} combined with the arguments regarding weak continuity in the proof of Theorem~\ref{exl1}.
\end{proof}

In the following we will improve our regularity results from sections~\ref{ltheory} and~\ref{nproblems}
and guarantee that the solution of the critical case obeys the boundary conditions.

\begin{theorem}\label{higherreg}
Let $f \in \mathcal{H}^1(\mathbb{R}^N)$, then $\Lambda^{-N}f \in C_0(\mathbb{R}^N)$ and
$$
\Lambda^{-N}:\mathcal{H}^1(\mathbb{R}^N) \longrightarrow C_0(\mathbb{R}^N)
$$
is bounded.
\end{theorem}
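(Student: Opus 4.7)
The plan is to combine the $\mathcal{H}^1$--$\text{BMO}$ duality of Fefferman--Stein, the identification $\Lambda^{-N}f=G_N\ast f$ with $G_N(x)=C_N\log|x|$ established in section~\ref{nproblems}, and a density argument in the spirit of Lemmas~\ref{density} and~\ref{boundedness}. The strategy breaks into three ingredients: (i) a pointwise $L^\infty$ bound via duality; (ii) the fact that the operator sends the dense subclass $\mathcal{D}$ of Lemma~\ref{density} into the Schwartz class; and (iii) the closedness of $C_0(\mathbb{R}^N)$ inside $L^\infty(\mathbb{R}^N)$.

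First I would establish the $L^\infty$ estimate. Since $G_N=C_N\log|x|$ is a classical example of a function in $\text{BMO}(\mathbb{R}^N)$ and its $\text{BMO}$ seminorm is translation invariant, the duality $(\mathcal{H}^1)^\ast=\text{BMO}$ yields
\begin{equation}\nonumber
|\Lambda^{-N}f(x)|=|\langle G_N(x-\cdot),f\rangle|\ll \|G_N\|_{\text{BMO}(\mathbb{R}^N)}\,\|f\|_{\mathcal{H}^1(\mathbb{R}^N)}\ll\|f\|_{\mathcal{H}^1(\mathbb{R}^N)}
\end{equation}
pointwise in $x$. The pairing is unambiguous because every element of $\mathcal{H}^1(\mathbb{R}^N)$ has vanishing integral, so the ambiguity of $G_N$ modulo additive constants plays no role; this is simultaneously the claimed operator bound from $\mathcal{H}^1(\mathbb{R}^N)$ into $L^\infty(\mathbb{R}^N)$.

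Next, for $g$ in the dense subspace $\mathcal{D}$ of Lemma~\ref{density}, $\hat{g}$ is smooth and vanishes in a neighbourhood of the origin, so $(2\pi|\xi|)^{-N}\hat{g}(\xi)\in C^\infty_c(\mathbb{R}^N)$; Fourier inversion therefore places $\Lambda^{-N}g$ in $\mathcal{S}(\mathbb{R}^N)\subset C_0(\mathbb{R}^N)$. To conclude, given $f\in\mathcal{H}^1(\mathbb{R}^N)$, I would invoke Lemma~\ref{density} to pick $g_j\in\mathcal{D}$ with $g_j\to f$ in $\mathcal{H}^1(\mathbb{R}^N)$. The first step then forces $\Lambda^{-N}g_j\to\Lambda^{-N}f$ uniformly; since each $\Lambda^{-N}g_j\in C_0(\mathbb{R}^N)$ and $C_0(\mathbb{R}^N)$ is closed inside $L^\infty(\mathbb{R}^N)$, the limit lies in $C_0(\mathbb{R}^N)$.

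The main obstacle I anticipate is a careful justification of the identity $\Lambda^{-N}f=G_N\ast f$ on all of $\mathcal{H}^1(\mathbb{R}^N)$, particularly in odd dimension $N$ where $\Lambda^{-N}$ is genuinely nonlocal and was introduced spectrally; one must check that the spectral definition agrees with convolution against the tempered distribution $G_N$ up to the constant freedom in $G_N$, which is precisely killed by the vanishing-mean property of $\mathcal{H}^1$. Once this bookkeeping is settled using the Fourier-analytic description of $G_N$ already obtained in section~\ref{nproblems}, the rest reduces to the essentially mechanical density argument outlined above.
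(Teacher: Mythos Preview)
Your argument is correct and takes a genuinely different route from the paper. One small slip: for $g\in\mathcal{D}$ the Fourier transform $\hat g$ is Schwartz and vanishes near the origin, but it need not be compactly supported, so $(2\pi|\xi|)^{-N}\hat g(\xi)$ lies in $\mathcal{S}(\mathbb{R}^N)$ rather than $C^\infty_c(\mathbb{R}^N)$; this does not affect the conclusion $\Lambda^{-N}g\in\mathcal{S}(\mathbb{R}^N)\subset C_0(\mathbb{R}^N)$.

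The paper proceeds instead via the atomic decomposition of $\mathcal{H}^1$: it takes a single $L^\infty$-atom $a$ supported on a cube $Q$ and, exploiting the cancellation $\int a=0$, derives by hand the pointwise decay $|\Lambda^{-N}a(x)|\ll \ell(Q)/(\ell(Q)+|x-c(Q)|)$ together with an explicit (log-Lipschitz in dimension one, Lipschitz otherwise) modulus of continuity; density of finite atomic combinations then closes the argument exactly as in your final step. Your Fourier-side approach through the dense class $\mathcal{D}$ of Lemma~\ref{density} is considerably shorter and avoids all the explicit kernel estimates, at the price of yielding no quantitative information: the paper's computation actually produces a uniform decay rate and a concrete modulus of continuity for $\Lambda^{-N}a$, which could be useful elsewhere but is not needed for the bare statement of the theorem. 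The concern you flag about identifying the spectral $\Lambda^{-N}$ with convolution against $G_N$ is already handled by the material in section~\ref{nproblems} (and Proposition~\ref{exun} for the even case), so no additional work is required there.
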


\begin{proof}
We already know from Proposition~\ref{exun} that $\|\Lambda^{-N} f\|_{L^\infty(\mathbb{R}^N)} \ll \|f\|_{\mathcal{H}^1(\mathbb{R}^N)}$.
Now let $a$ be a $L^\infty$-atom for $\mathcal{H}^1(\mathbb{R}^N)$, i.~e. $a \in \mathcal{H}^1(\mathbb{R}^N)$ and
\begin{itemize}
\item There exists a $\mathbb{R}^N-$cube $Q \subset \mathbb{R}^N$, that is $Q=c(Q)+ \ell(Q) Q_0$ with $c(Q) \in \mathbb{R}^N$,
$Q_0 = [-1/2,1/2]^N$ and $\ell(Q) > 0$,
such that $a$ is supported on $Q$,
\item $\|a\|_{L^\infty(Q)} \le |Q|^{-1}$,
\item $\int_Q a \, dx =0$.
\end{itemize}
We start proving that $\Lambda^{-N}a \in C_0(\mathbb{R}^N)$. Let $x \in \mathbb{R}^N$ be such that $|x - c(Q)| \ge 2 \sqrt{N} \ell$. Then
\begin{eqnarray}\nonumber
\Lambda^{-N} a(x) &=& C_N \int_Q \log |x-y| a(y) dy \\ \nonumber
&=& C_N \int_Q \left[ \log |x-y| - \log |x-c(Q)| \right] a(y) dy,
\end{eqnarray}
after the use of the first and third defining properties of $a$ in the first and second equalities respectively.
If $y \in Q$, then
\begin{eqnarray}\nonumber
|x-y| &=& |[x-c(Q)]-[y-c(Q)]| \\ \nonumber
&\ge& |x-c(Q)| - |y-c(Q)| \\ \nonumber
&\ge& \frac34 |x - c(Q)| >0,
\end{eqnarray}
where we have used
$$
|y-c(Q)| \le \frac12 \sqrt{N} \ell \le \frac14 |x - c(Q)|.
$$
The same reasoning leads to conclude
$$
\frac34 \le \frac{|x - y|}{|x - c(Q)|} \le \frac54,
$$
and then
$$
\frac{|x - y|}{|x - c(Q)|} = 1 + t, \qquad |t| \le \frac14.
$$
The triangle inequality again gives
$$
\left| \, |x-y| - |x-c(Q)| \, \right| \le |y - c(Q)|,
$$
which implies
$$
|t| \le \frac{|y-c(Q)|}{|x-c(Q)|},
$$
and then
$$
\left| \log \left[ \frac{|x-y|}{|x-c(Q)|} \right] \right| \le C \frac{|y-c(Q)|}{|x-c(Q)|}.
$$
Therefore
\begin{eqnarray}\nonumber
\left| \Lambda^{-N} a(x) \right| &\ll& \int_Q \frac{|y-c(Q)|}{|x-c(Q)|} |a(y)| dy \\ \nonumber
&\ll& \frac{1}{|x-c(Q)|} \int_Q |y-c(Q)| \, |Q|^{-1} dy \\ \nonumber
&\ll& \frac{\ell(Q)}{|x-c(Q)|}.
\end{eqnarray}
Since this last estimate holds for $|x-c(Q)| \ge 2 \sqrt{N} \ell$ and
$$
\left| \Lambda^{-N} a(x) \right| \ll \|a\|_{\mathcal{H}^1(\mathbb{R}^N)} \ll 1,
$$
it follows that
$$
\left| \Lambda^{-N} a(x) \right| \ll \frac{\ell(Q)}{\ell(Q) + |x-c(Q)|} \, \forall \, x \in \mathbb{R}^N,
$$
which proves the decay in the limit $|x| \to \infty$.

To prove continuity of $\Lambda^{-N} a(x)$ choose $x,h \in \mathbb{R}^N$ to find
\begin{eqnarray}\nonumber
& & \left| \Lambda^{-N} a(x+h) - \Lambda^{-N} a(x) \right| \\ \nonumber
&=& C_N \left| \int_Q ( \log|x+h-y| - \log|x-y| ) a(y) \, dy \right| \\ \nonumber
&\ll& \| a \|_{L^\infty} \int_Q \left| \, \log|x+h-y| - \log|x-y| \, \right| dy \\ \nonumber
&=& \int_{Q_0} \left| \, \log\left|\frac{x-c+h}{\ell}-z\right| - \log\left|\frac{x-c}{\ell}-z\right| \, \right| dz \\ \nonumber
&=:& F\left(\frac{x-c}{\ell},\frac{h}{\ell}\right),
\end{eqnarray}
where we have used the change of variables $y = \ell z + c$ in the previous to last step.
It is enough to prove continuity of $F$ and we may assume $0 < |h| \le \frac14$.
Since $Q_0 \subset B_{\sqrt{N}/2}(0) =: B$, we have
\begin{eqnarray}\nonumber
F(x,h) &\le& \int_B \left| \, \log|x+h-y| - \log|x-y| \, \right| dy \\ \nonumber
&=& |h|^N \int_{|h|^{-1} B} \left| \, \log|x' + h' - u| - \log|x' - u| \, \right| du,
\end{eqnarray}
after the change of variables $y=|h|u$, and where $x'=x/|h|$ and $h'=h/|h| \in \mathbb{S}^{N-1}$.
If $|x| \ge \sqrt{N}$ then $|x'-u| \ge |x'|-|u| \ge \sqrt{N}/(2|h|)$ for $u \in |h|^{-1} B$.
Therefore
\begin{eqnarray}\nonumber
\log|x' + h' - u| - \log|x' - u| &=& \log \left| \frac{x' - u}{|x' - u|} + \frac{h'}{|x' - u|} \right| \\ \nonumber
&=& O\left(\frac{|h'|}{|x' - u|}\right) = O\left(|h|\right).
\end{eqnarray}
Then
$$
F(x,h) \ll |h|^N \int_{|h|^{-1} B} |h| \, du \ll |h|,
$$
which proves continuity in this case.

If $|x| \le \sqrt{N}$ then $B-x \subset B_{3\sqrt{N}/2}(0)=3B$ and
\begin{eqnarray}\nonumber
F(x,h) &\le& |h|^N \int_{3B |h|^{-1}} \left| \, \log|z+h'| - \log|z| \, \right| dz \\ \nonumber
&=& |h|^N \int_{\{3B |h|^{-1}\}\cap\{|z| \le 2\}} \left| \, \log|z+h'| - \log|z| \, \right| dz \\ \nonumber
& & + \, |h|^N \int_{\{3B |h|^{-1}\}\cap\{|z| \ge 2\}} \left| \, \log|z+h'| - \log|z| \, \right| dz \\ \nonumber
&=:& I_1 + I_2.
\end{eqnarray}
after the change of variables $y = x + |h|z$ in the first step.
The first term can be estimated as follows
$$
I_1 \le |h|^N \int_{|z| \le 2} \left| \, \log|z+h'| - \log|z| \, \right| dz \ll |h|^N,
$$
since the integral can be bounded by a constant independent of $h'$.
For the second term we find
\begin{eqnarray}\nonumber
I_2 &=& |h|^N \int_{2 \le |z| \le 3\sqrt{N}/(2|h|)} \left| \, \log\left|\frac{z}{|z|}+\frac{h'}{|z|}\right| \, \right| dz \\ \nonumber
&\ll& \left\{ \begin{array}{cc}
|h| \, \log \left(\frac{1}{|h|}\right), & n=1 \\
|h| \, & n>1
\end{array} \right. ,
\end{eqnarray}
because the integrand is $O\left(|z|^{-1}\right)$. Summing up:
\begin{eqnarray}\nonumber
& & \left| \Lambda^{-N}a(x+h) - \Lambda^{-N}a(x) \right| \\ \nonumber
&\ll& \left\{ \begin{array}{cc}
\min\left\{1,|h|\left[1+ \log \left(\frac{1}{|h|}\right) \right]\right\}, & n=1 \\
\min\left\{1,|h|\right\}, & n>1
\end{array} \right.
\, \forall \, x,h \in \mathbb{R}^N,
\end{eqnarray}
such that $0 < |h| \le 1/4$.

Therefore
$$
\Lambda^{-N}:\mathcal{H}^1_{\text{at}}(\mathbb{R}^N) \longrightarrow C_0(\mathbb{R}^N),
$$
where $\mathcal{H}^1_{\text{at}}(\mathbb{R}^N)$ is the set of all finite linear combinations of
$L^\infty(\mathbb{R}^N)$-atoms for $\mathcal{H}^1(\mathbb{R}^N)$.
Since $\mathcal{H}^1_{\text{at}}(\mathbb{R}^N)$ is dense in $\mathcal{H}^1(\mathbb{R}^N)$ for $f \in \mathcal{H}^1(\mathbb{R}^N)$
there exists $f_j \in \mathcal{H}^1_{\text{at}}(\mathbb{R}^N)$ such that $f_j \rightarrow f$ in $\mathcal{H}^1(\mathbb{R}^N)$,
and therefore $\Lambda^{-N} f_j \rightarrow \Lambda^{-N} f$ in $L^\infty(\mathbb{R}^N)$.
Uniform convergence guarantees that $\Lambda^{-N} f$ is not only bounded but also continuous.

Now we prove that $\Lambda^{-N} f(x) \to 0$ when $|x| \to \infty$.
Uniform convergence of $\Lambda^{-N} f_j(x)$ to $\Lambda^{-N} f(x)$ implies that there exists a $J \in \mathbb{N}$ such that
for $j \ge J$ it holds that $|\Lambda^{-N} f(x) - \Lambda^{-N} f_j(x)| \le \epsilon/2 \,\, \forall \, x \in \mathbb{R}^N$.
Now fix such a $j \ge J$. Since $\Lambda^{-N} f_j(x) \to 0$ when $|x| \to \infty$,
then there exist $0 < R < \infty$ such that for $|x| \ge R$ it holds that $|\Lambda^{-N} f_j(x)| \le \epsilon/2$.
In consequence for $|x| \ge R$,
\begin{eqnarray}\nonumber
|\Lambda^{-N} f(x)| &=& |\Lambda^{-N} f(x) - \Lambda^{-N} f_j(x) + \Lambda^{-N} f_j(x)| \\ \nonumber
&\le& |\Lambda^{-N} f(x) - \Lambda^{-N} f_j(x)| +|\Lambda^{-N} f_j(x)| \\ \nonumber
&\le& \epsilon.
\end{eqnarray}
\end{proof}

\begin{corollary}\label{corc0}
The solution whose existence was proven in Theorem~\ref{exh1} actually belongs to $C_0(\mathbb{R}^N)$ in the critical case $2m=N=2k$.
\end{corollary}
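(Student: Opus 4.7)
The plan is to reduce the statement to a direct application of Theorem~\ref{higherreg}. In the critical case $2m=N=2k$, the operator $(-\Delta)^m$ coincides with $\Lambda^N$ in the sense of Definition~\ref{lambda}, because $m=N/2$ and $\Lambda^{2m}=(-\Delta)^m$. Therefore, the solution $u$ produced in Theorem~\ref{exh1} satisfies
\begin{equation}\nonumber
\Lambda^N u = S_k[-u] + \lambda f, \qquad x \in \mathbb{R}^N.
\end{equation}

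First I would verify that the right-hand side lies in $\mathcal{H}^1(\mathbb{R}^N)$. By hypothesis $f \in \mathcal{H}^1(\mathbb{R}^N)$, and Theorem~\ref{exh1} already provides the regularity $u \in \dot{W}^{2m-\delta,N/(N-\delta)}(\mathbb{R}^N)$ for every $0 \le \delta \le 2m$, so in particular for every $0 \le \delta \le 2m-2$. Lemma~\ref{mvalues} is exactly tailored to this situation and yields $S_k[-u] \in \mathcal{H}^1(\mathbb{R}^N)$. Consequently $S_k[-u] + \lambda f \in \mathcal{H}^1(\mathbb{R}^N)$.

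Next, I would invoke the uniqueness of the solution of the linear problem $\Lambda^N v = g$ in the critical case. As discussed in Remark~\ref{remun}, once we impose an at most logarithmic (equivalently $\mathrm{BMO}$) growth at infinity, this problem has a unique solution in the appropriate quotient, given by convolution with the fundamental solution $G_N$. Since $u$ itself falls in this class (it belongs to $\dot W^{2m-\delta,N/(N-\delta)}(\mathbb{R}^N)$ for all admissible $\delta$, in particular $u\in \text{VMO}(\mathbb{R}^N)$ via the refined Sobolev embedding of Theorem~\ref{refsobem}), we obtain the pointwise identity
\begin{equation}\nonumber
u = \Lambda^{-N}\bigl(S_k[-u] + \lambda f\bigr).
\end{equation}
Applying Theorem~\ref{higherreg} to the right-hand side immediately gives $u \in C_0(\mathbb{R}^N)$, which is the desired conclusion.

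The only subtle point is the identification $u = \Lambda^{-N}(S_k[-u]+\lambda f)$ in the critical case, where the inverse of $\Lambda^N$ is only defined modulo polynomials. I expect this to be the main obstacle, but it is already dealt with in Remark~\ref{remun}: the $\mathrm{BMO}$ framework fixes uniqueness up to an additive constant, and the $\mathcal{H}^1$ data has zero mean, so the convolution representation is unambiguous. Once this identification is in place, the corollary reduces to a one-line application of Theorem~\ref{higherreg}.
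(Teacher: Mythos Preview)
Your argument is correct and is exactly the intended one: the paper states this as an immediate corollary of Theorem~\ref{higherreg}, and your proof spells out precisely why, namely that $S_k[-u]+\lambda f\in\mathcal{H}^1(\mathbb{R}^N)$ by Lemma~\ref{mvalues} and the regularity from Theorem~\ref{exh1}, whence $u=\Lambda^{-N}(S_k[-u]+\lambda f)\in C_0(\mathbb{R}^N)$. The only simplification is that your ``subtle point'' about the identification $u=\Lambda^{-N}(\cdot)$ is not really an obstacle: in the proof of Theorem~\ref{exh1} the solution is \emph{constructed} as $u=(-\Delta)^{-m}v=G\ast v$ (see Proposition~\ref{exun} and equation~\eqref{convolution}), so the convolution representation holds by definition rather than by a separate uniqueness argument.
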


\vskip5mm
\noindent
{\footnotesize
Pedro Balodis\par\noindent
Departamento de Matem\'aticas\par\noindent
Universidad Aut\'onoma de Madrid\par\noindent
{\tt pedro.balodis@uam.es}\par\vskip1mm\noindent
\& \par\vskip1mm\noindent
Carlos Escudero\par\noindent
Departamento de Matem\'aticas\par\noindent
Universidad Aut\'onoma de Madrid\par\noindent
{\tt carlos.escudero@uam.es}\par\vskip1mm\noindent
}
\end{document}